\newcommand{\NN}{\mathbb{N}}
\newcommand{\ZZ}{\mathbb{Z}}
\newcommand{\FF}{\mathbb{F}}
\newcommand{\U}{\mathcal{U}}
\newcommand{\V}{\mathcal{V}}
\newcommand{\W}{\mathcal{W}}
\newcommand{\Z}{\mathcal{Z}}
\theoremstyle{theorem}
\newtheorem{definition}{Definition}[section]
\newtheorem{theorem}{Theorem}[section]
\newtheorem{corollary}{Corollary}[section]
\newtheorem{conjecture}{Conjecture}[section]
\newtheorem{lemma}{Lemma}[section]
\theoremstyle{definition}
\newtheorem{case}{Case}
\newtheorem{example}{Example}
\newtheorem{observation}{Observation}[section]
\begin{document}
\title{Structures in Additive Sequences}

\author{Borys Kuca}
\email[Borys Kuca]{borys.kuca@yale.edu}

\address{Morse College, Yale University, 304 York St, New Haven, CT 06511}

\begin{abstract} Consider the sequence $\mathcal{V}(2,n)$ constructed in a greedy fashion by setting $a_1 = 2$, $a_2 = n$ and defining $a_{m+1}$
as the smallest integer larger than $a_m$ that can be written as the sum of two (not necessarily distinct) earlier terms in exactly one way; the sequence $\mathcal{V}(2,3)$, for example, is given by 
$$ \mathcal{V}(2,3) = 2,3,4,5,9,10,11,16,22,\dots$$
We prove that if $n \geqslant 5$ is odd, then the sequence $\mathcal{V}(2,n)$ has exactly two even terms $\left\{2,2n\right\}$ if and only if $n-1$ is not a power of 2. We also show that in this case, $\mathcal{V}(2,n)$ eventually becomes a union of arithmetic progressions. If $n-1$ is a power of 2, then there is at
least one more even term $2n^2 + 2$ and we conjecture there are no more even terms. In the proof, we display an interesting connection between $\V(2,n)$ and Sierpinski Triangle. We prove several other results, discuss a series of striking phenomena and pose many problems. This relates to existing results of Finch, Schmerl \& Spiegel and a classical family of sequences defined by Ulam.
\end{abstract}

\subjclass[2010]{11B13, 11B83}
\keywords{Additive sequences, Ulam sequence.}

\maketitle

\section{Introduction}
\subsection{Introduction} This paper is concerned with curious structures emerging in integer sequences defined by simple additive relations. 
We define the integer sequence $\mathcal{V}(a,b)$ by adding, in a greedy fashion, the least positive integer greater than previous terms of the sequence that can be written as the sum of two earlier (not necessarily distinct) elements of the sequence in a unique way. Examples are given by the sequences
\begin{align*}
\V(1,2) &=\{1,2,3,5,7,9,11,13,15,17,19,21,23,25,27,...\}\\
\V(2,3) &=\{2,3,4,5,9,10,11,16,22,24,28,29,30,37,42,...\}
\end{align*}

It turns out that $\V$-sequences have, perhaps surprisingly, a wealth of intriguing structures some of which we will describe in this paper. 

\begin{theorem}[Main result]\label{(2,n) theorem 1}
Let $n\geqslant 5$ be odd. If $n-1$ is not a power of 2, then $\V(2,n)$ contains exactly two even terms, $2$ and $2n$, and eventually becomes the union of finitely many arithmetic progressions. If
$n-1$ is a power of 2, then it contains at least the three even numbers $\left\{2,2n,2n^2+2\right\}$.
\end{theorem}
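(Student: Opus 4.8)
The plan is to reduce the entire problem to the behaviour of a single $\FF_2$-linear recurrence and then to analyze its autocorrelation. Write each odd candidate as $n+2j$ with $j\geqslant 0$ and let $g(j)\in\{0,1\}$ record whether $n+2j$ is a term of $\V(2,n)$, with the conventions $g(i)=0$ for $i<0$ and $g(0)=1$. I first record the parity dichotomy: a sum of two terms is even exactly when the summands share a parity. I would then check by hand that $2$ and $2n=n+n$ are terms and that every odd number in $[n,2n]$ is a term, i.e. $g(j)=1$ for $0\leqslant j\leqslant (n-1)/2$. The engine of the whole argument is the following claim, to be proved by strong induction on the value: as long as the only even terms produced so far are $2$ and $2n$, an odd number $n+2j$ is a term precisely when exactly one of $n+2j-2$ and $n+2j-2n$ is a smaller odd term, since an odd number can only be written as (even term)$\,+\,$(odd term) and the only available even shifts are $2$ and $2n$. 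Over $\FF_2$ this reads
\begin{equation*}
g(j)=g(j-1)+g(j-n),
\end{equation*}
a linear recurrence with characteristic polynomial $x^n+x^{n-1}+1$ and generating function $G(x)=\sum_j g(j)x^j=(1+x+x^n)^{-1}$ over $\FF_2$.

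Granting the recurrence, two of the three conclusions follow quickly. First, a linear recurrence over a finite field whose characteristic polynomial has nonzero constant term is purely periodic; hence $\{j:g(j)=1\}$ is a periodic subset of $\NN$, so the odd terms form a finite union of arithmetic progressions, and together with $2$ and $2n$ this yields the claim that $\V(2,n)$ eventually becomes a union of finitely many arithmetic progressions. Second, expanding $G(x)=\sum_{i\geqslant 0}(x+x^n)^i$ gives the closed form $g(j)=\sum_{t\geqslant 0}\binom{j-t(n-1)}{t}\pmod 2$, a Sierpinski-type expression governed by Lucas' theorem; this is the promised link to the Sierpinski triangle, and it is exactly here that the arithmetic of $n-1$ enters, through its binary expansion.

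The heart of the matter, and the step I expect to be the main obstacle, is closing the induction by showing that no even number other than $2$, $2n$ (and, in the exceptional case, $2n^2+2$) is ever added. An even number $2k$ has representations of two kinds: the at most two even$\,+\,$even representations coming from $\{2,2n\}$, and the odd$\,+\,$odd representations $n+2j_1$, $n+2j_2$ with $j_1+j_2=k-n$ and $g(j_1)=g(j_2)=1$. Writing $s=k-n$, the number of the latter is controlled by the autocorrelation $C(s)=\sum_j g(j)g(s-j)$, and one must prove that the total representation count is never exactly $1$ outside the advertised exceptions. I would dispose of the even$\,+\,$even cases directly (for instance $2n+2=2+2n=n+(n+2)$ always has at least two representations, and $4n=2n+2n$ acquires $(n-1)/2\geqslant 2$ further odd$\,+\,$odd representations), and reduce the remaining even numbers to showing $C(s)\neq 2$, equivalently that $2k$ has either no or at least two unordered odd$\,+\,$odd representations.

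Using the binomial-coefficient-mod-$2$ description of the support of $g$, I would argue that when $n-1$ is not a power of $2$ the self-overlaps of this support always produce either zero or at least two unordered odd$\,+\,$odd representations, so every even number larger than $2n$ is blocked and the induction closes with even terms exactly $\{2,2n\}$. When $n-1=2^r$, by contrast, the support is rigid enough that the recurrence persists through $(2n,2n^2+2)$ and the single value $s=n^2-n+1$ admits a unique pair, namely $(j_1,j_2)=(0,\,n^2-n+1)$, corresponding to $2n^2+2=n+(2n^2-n+2)$; this forces $2n^2+2$ to be a genuine even term. The delicate points are verifying that this is the only surviving pair and that the competing even$\,+\,$even partners $2n^2$ and $2n^2-2n+2$ are themselves not terms. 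I expect the cleanest route is to track the Sierpinski rows of $g$ and count overlaps via Lucas' theorem, with the power-of-two hypothesis on $n-1$ being precisely what makes the overlap count at $s=n^2-n+1$ drop to one.
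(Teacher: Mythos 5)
Your framework is sound, and in fact it coincides with the paper's own machinery up to a change of notation: your recurrence $g(j)=g(j-1)+g(j-n)$ is exactly the paper's key lemma that, for odd $u$ with $2n<u<x$, one has $1(u)=1(u-2)+1(u-2n)$ over $\FF_2$; your generating function $(1+x+x^n)^{-1}$ is the paper's algebra in $\FF_2[t]/(t^n+t+1)$ written with the reciprocal polynomial; and your closed form $g(j)=\sum_{t\geqslant 0}\binom{j-t(n-1)}{t}\bmod 2$ is the paper's Sierpinski structure, where the windows satisfy $R_k(i)=\binom{k-1}{i-1}\bmod 2$. Your observation that an invertible linear recurrence over a finite field is purely periodic, so that exactly two even terms forces the odd terms to be eventually periodic, is correct and is a pleasant self-contained substitute for the paper's appeal to Finch's theorem. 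Your bookkeeping of the even-plus-even cases ($2n+2$ and $4n$) is also correct.

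The genuine gap is that the entire content of the theorem sits in the step you introduce with ``I would argue.'' You must show that, under the inductive hypothesis, the representation count of every even number above $2n$ is never exactly one when $n-1$ is not a power of $2$, and is exactly one precisely at $2n^2+2$ when $n-1=2^r$; this is asserted, not proved. Moreover, the route you propose for it --- controlling the integer-valued autocorrelation $C(s)$ of the fractal support of $g$ for \emph{every} shift $s$ via Lucas' theorem --- is a substantially stronger statement than what the paper needs, and there is no indication of how to carry it out: Lucas' theorem yields parities of individual binomial coefficients, whereas you need exact integer counts of overlaps of a self-similar set, uniformly in $s$. The paper circumvents this global counting problem with a minimal-counterexample device you do not have. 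If $x$ is the least third even term, then, because all of $n,n+2,\ldots,3n-2$ are terms, \emph{every} term in the window $[x-3n+2,\,x-n]$ produces a representation of $x$; uniqueness of $x$'s representation therefore forces that window to be the single pattern $(1,0,\ldots,0)$. This forced local pattern is then propagated through the invertible recurrence (the identity $t(t^{n-1}+1)=1$, which lets one run the $Q$-windows near $x$ backwards and match them against the $P$-windows propagated forward from the initial segment). When $n-1$ is not a power of $2$, an explicit computation with the auxiliary polynomials $S_l$ and $P_{2^k n}$ exhibits a second representation $x=\bigl((2^{k+1}+3)n-2^{k+1}\bigr)+\bigl(x-(2^{k+1}+3)n+2^{k+1}\bigr)$, a contradiction; when $n-1=2^k$, the matching pins $x=2n^2+2$ and a two-line argument (if both summands of a putative second representation were terms, then $R_l(n+1-i)=R_{n-l}(i)=1$ would force $n+1\leqslant i+l\leqslant n$) gives uniqueness. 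You need either to import this minimality-plus-forced-window idea, or to actually supply the global overlap count; as written, the proposal restates the problem at exactly its hardest point.
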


We conjecture that in the second case, $n-1$ being a power of 2, the sequence contains exactly three even elements. If that were true, then we can show that it can  also be
written as a union of finitely many arithmetic progressions except for finitely many initial terms. This statement is contrasted by the following conjecture.

\begin{conjecture}\label{Conjecture about Regular Modified Ulam Sequences}
Let $a,b$ be relatively prime. Then $\V(a,b)$ is the union of finitely many arithmetic progressions if and only if  $(a,b) = (1,2)$, $(a,b) = (1,3)$ or $a$ is even and $b > 2a$ is odd.
\end{conjecture}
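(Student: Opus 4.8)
The plan is to break the biconditional into sufficiency (each of the three listed families yields a finite union of arithmetic progressions) and necessity (every other coprime pair does not), since these demand quite different tools. For sufficiency, the pairs $(1,2)$ and $(1,3)$ are finite base cases. In each one guesses the eventual pattern — for $(1,2)$ all odd numbers together with $2$, and for $(1,3)$ all odd numbers together with $4$ — and then verifies it by a bounded induction: assuming the sequence agrees with the pattern up to some explicit threshold, one checks that the number of representations $x=a_i+a_j$ is exactly one precisely for the next predicted term and at least two for every intermediate integer, the parity bookkeeping making this a finite computation on residues modulo $2$. The substantial family is $a$ even with $b>2a$ odd, which I would treat by lifting the mechanism behind Theorem \ref{(2,n) theorem 1} from $a=2$ to arbitrary even $a$.

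That theorem rests on a parity dichotomy: with $a$ even, all but finitely many terms are odd, so an even value can be realized only as $\mathrm{even}+\mathrm{even}$ or $\mathrm{odd}+\mathrm{odd}$, and the hypothesis $b>2a$ keeps the set of even terms bounded (the analogue of $\{2,2n\}$). Granting that boundedness, the odd terms are controlled by the uniqueness condition for odd targets, which — read modulo $2^k$ — is governed by the self-similar Sierpinski pattern exhibited for $\V(2,n)$; I would prove that this pattern is eventually periodic in the coarse-scale variable and translate that periodicity directly into a finite union of progressions for the odd part. The new feature beyond $a=2$ is that the number and position of the exceptional even terms depend on $a$ and on the $2$-adic valuation of $b-1$ — the same power-of-$2$ phenomenon already present at $a=2$ — so this count must be made uniform in $a$. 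I expect the boundedness of the even terms, not the periodicity of the odd part, to be the delicate point, since already for $a=2$ with $n-1$ a power of $2$ it is only conjectural.

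For necessity I would exploit a rigidity of finite unions of progressions: such a set is eventually periodic modulo the least common multiple $D$ of its differences, so its counting function grows as $cx+O(1)$ with $c\in\QQ$, and its subset of even terms is either finite or eventually periodic. The strategy is then to show that for every excluded pair the even terms form a set that is neither finite nor eventually periodic. When $a$ is odd with $(a,b)\neq(1,2),(1,3)$, or when $a$ is even but $b<2a$, the clean parity separation breaks down — small examples such as $\V(4,5)$ already produce a steady stream of even terms — and one must argue that these even terms recur at an irregular rate that no single modulus $D$ can capture.

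The crux, and the reason the statement is only conjectural, is precisely this necessity direction: proving that a greedily generated additive sequence is \emph{not} eventually periodic is of the same order of difficulty as the open problems on the classical Ulam sequence studied by Finch and by Schmerl \& Spiegel, where the empirically observed quasi-period — an apparently irrational ``signal'' governing the spacing of terms — has never been established rigorously. A finite union of progressions admits no irrational signal, so exhibiting such a signal, or merely an unbounded oscillation in the error term of the counting function, would settle necessity; but making either rigorous lies beyond present techniques, and I expect it to remain the principal obstacle.
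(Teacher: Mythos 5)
You are attempting to prove a statement that the paper itself poses only as Conjecture \ref{Conjecture about Regular Modified Ulam Sequences}; the paper contains no proof of it, and your proposal does not supply one either --- it is a research plan whose load-bearing steps are exactly the points the paper leaves open. On sufficiency: the base cases $(1,2)$, $(1,3)$ are fine and match Theorem \ref{First two sequences}, and your reduction of the family ``$a$ even, $b>2a$ odd'' to finiteness of the even terms via Finch's criterion (Theorem \ref{Finch's criterion}) is the paper's own strategy. But the boundedness you ``grant'' is unproven even in the one solved case $a=2$: Theorem \ref{(2,n) theorem} gives exactly two even terms only when $n-1$ is not a power of $2$, and when it is, regularity remains conditional on Conjecture \ref{(2,n) conjecture}. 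For $a\geqslant 4$ the paper proves only a \emph{lower} bound of $2+\lfloor a/4\rfloor$ even terms (via the Freiman homomorphism onto $\W$), and the complete lists of even terms are purely empirical (Conjecture \ref{even terms of regular modified Ulam sequences}). Your plan to ``lift the mechanism'' of the Sierpinski analysis also fails at a concrete technical point: the recurrence $1(u)=1(u-2)+1(u-2n)$ of Lemma \ref{(2,n) representations}, which generates the whole $\FF_2[t]/(t^n+t+1)$ structure, is valid precisely because $2$ and $2n$ are assumed to be the \emph{only} even summands; with on the order of $a/4$ even terms the recurrence acquires that many summands and no analogous polynomial structure is known. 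Moreover, your expectation that the exceptional even terms for general even $a$ are governed by the $2$-adic valuation of $b-1$ contradicts the paper's data: for $a>2$ the even terms are conjectured to be \emph{linear} in $b$, which the paper explicitly contrasts with the quadratic, power-of-$2$ phenomenon special to $a=2$.

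On necessity your rigidity observation is correct --- a finite union of arithmetic progressions has an even part that is finite or eventually periodic --- but turning it into a proof needs two things, both open: (i) that every excluded coprime pair (e.g.\ $\V(2,3)$, or $\V(4,5)$ with $b<2a$) has infinitely many even terms whose spacing defeats every modulus, for which no argument exists beyond numerics; and (ii) in effect the converse of Finch's criterion, which the paper states only as Conjecture \ref{Finch - second direction} while admitting there is no intuition for it. You acknowledge this honestly in your final paragraph, and that acknowledgement is accurate; but it means the proposal establishes neither direction of the biconditional. What you have is a correct identification of the problem's structure and of the right reductions --- essentially the ones the paper makes --- together with a sufficiency sketch that overstates what the $a=2$ machinery can deliver for general even $a$.
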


We also discovered that a result of Finch, proven in a different context and for a different type of sequence, holds in our case as well.
\begin{theorem}[Finch]\label{Finch's criterion}
A $\V$-sequence with only finitely many even terms eventually becomes a union of finitely many arithmetic progressions.
\end{theorem}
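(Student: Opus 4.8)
The plan is to use parity to convert the global greedy rule into a purely local recurrence on the odd integers, and then to finish with a pigeonhole (finite-automaton) argument. First I would record the basic reformulation of the greedy rule: since every representation $a = x+y$ has summands $x,y < a$, the membership of any sufficiently large $a$ is already determined once all smaller integers have been processed. Hence, for all large $a$, one has $a \in \V$ if and only if $a$ admits exactly one representation as a sum of two strictly smaller terms of $\V$ (no qualifying integer is permanently skipped by the greedy process). Write the finitely many even terms as $e_1 < \dots < e_k$ and set $E = e_k$.

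Next comes the key parity observation: if $a$ is odd, then in any representation $a = x+y$ exactly one summand is even, so the representations of $a$ are in bijection with the even terms $e_i$ for which $a - e_i \in \V$ (the even member of the pair being $e_i$, the odd member $a - e_i$). Consequently, for every odd $a > E$,
\[
 a \in \V \iff |\{\, i : a - e_i \in \V \,\}| = 1,
\]
and each $a - e_i$ is again odd and lies in the window $[a-E,\, a-1]$. Since there are no even terms beyond $E$, no even integer greater than $E$ belongs to $\V$, so the whole sequence beyond $E$ is governed by this recurrence on the odd integers alone.

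I would then package this as a finite-state dynamical system. For odd $a$, let $v(a)$ be the binary vector recording membership in $\V$ of the $E/2$ odd integers in $[a-E,\, a-2]$; the displayed equivalence shows that $\mathbf{1}[a \in \V]$ is a fixed Boolean function of $v(a)$, and that $v(a+2)$ is obtained from $v(a)$ by shifting off the first bit and appending this new bit. Thus $a \mapsto v(a)$ is a deterministic self-map on the finite set $\{0,1\}^{E/2}$, valid for all odd $a$ beyond a bounded threshold. By pigeonhole the orbit is eventually periodic, so $\mathbf{1}[a \in \V]$ is an eventually periodic function of the odd integer $a$. An eventually periodic $0/1$ sequence is exactly the indicator of a finite union of arithmetic progressions (one progression, of common difference equal to the period, for each ``on'' residue class) together with finitely many exceptional terms; absorbing the $k$ even terms and the bounded initial segment then completes the proof.

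The genuinely delicate point, I expect, is not the periodicity step but the careful justification that the local recurrence holds for all large $a$ with no missed or spurious representations. One must verify that the greedy ``smallest next term'' rule really coincides with the ``exactly one representation'' condition for large $a$, that the bijection between representations and even terms is exact (no overcounting, which holds because the even summand is unique and is automatically distinct from the odd one), and that all relevant summands $e_i$ and $a - e_i$ are genuine earlier terms. Each of these becomes automatic only once $a$ exceeds an explicit bound depending on $E$ and the initial data, and pinning down that threshold is where the real care is required.
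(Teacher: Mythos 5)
Your proposal is correct and is essentially the same argument the paper relies on: the paper does not write out a proof but cites Finch, whose original argument is precisely this parity reduction (representations of a large odd $a$ correspond bijectively to even terms $e_i$ with $a-e_i\in\V$) followed by the finite-window, pigeonhole periodicity step, the paper's only added observation being that distinctness of summands is never used, so the proof carries over verbatim to $\V$-sequences. The threshold caveats you flag (membership of $a$ is decided only by terms below $a$, generators excluded, all $a-e_i$ landing in the window $[a-E,a-2]$) are exactly the routine checks that make this transfer work.
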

We also obtain a series of other results as a byproduct. One statement is obtained by using Freiman homomorphisms to link the sequence to a geometric structure in $\mathbb{Z}^2_{\geq 0}$
and is as follows.
\begin{theorem}
If $a,b$ are relatively prime positive integers such that $a$ is even and $b>2a$, then $\V(a,b)$ has at least $2+\lfloor a/4\rfloor$ even terms.
\end{theorem}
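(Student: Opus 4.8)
The plan is to pass from the sequence to a two‑dimensional lattice picture via a Freiman homomorphism and to read off the even terms from the parity of the second coordinate. Since $\gcd(a,b)=1$ and $a$ is even, $b$ must be odd, so a term of the form $ia+jb$ satisfies $ia+jb\equiv j \pmod 2$; thus the even terms are exactly those lattice points with even second coordinate. Writing $\phi(ia+jb)=(i,j)$, I would first check that $\phi$ is well defined and injective on the ``clean'' range of values below $ab$: if $N=ia+jb<ab$ with $i,j\ge 0$ then $j<a$, and $\gcd(a,b)=1$ then forces the representation to be unique. On this range the condition ``$N$ is a sum of two earlier terms in exactly one way'' becomes the purely combinatorial statement that the lattice point $\phi(N)$ has a unique representation as an unordered sum of two lattice points that are themselves terms.

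Next I would pin down the structure on two distinguished lines. Along the column $i=0$ the points $(0,\ell)=\ell b$ can, in the clean range, only be decomposed as $(0,\ell_1)+(0,\ell_2)$, so the multipliers $\ell$ that occur are governed by exactly the same greedy rule as the terms of $\V(1,2)$; since $\V(1,2)=\{1,2\}\cup\{\text{odd integers}\ge 3\}$, the only even multiplier appearing is $\ell=2$. This produces the two obvious even terms $a=(1,0)$ and $2b=(0,2)$. The heart of the argument is the line $i=1$: I claim that for $m\le a$ the point $(1,m)$ is a term precisely when $m\in\{0,1\}\cup 4\ZZ_{>0}$, which I would prove by induction on $m$ using that any clean representation of $(1,m)$ has the form $(0,\ell)+(1,m-\ell)$. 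For odd $m\ge 3$ one always has two representations, $(0,m)+(1,0)$ together with $(0,m-4)+(1,4)$ (or $(0,2)+(1,1)$ when $m=3$), so $(1,m)$ is excluded; for $m\equiv 2\pmod 4$ both $(0,m-1)+(1,1)$ and $(0,2)+(1,m-2)$ occur, so $(1,m)$ is again excluded; and for $m\equiv 0\pmod 4$ the unique surviving representation is $(0,m-1)+(1,1)$. As the values $4k$ with $1\le k\le\lfloor a/4\rfloor$ are exactly the multiples of $4$ in $(0,a]$, this yields the even terms $a+4kb=(1,4k)$ for $k=1,\dots,\lfloor a/4\rfloor$, and together with $a$ and $2b$ this gives the claimed $2+\lfloor a/4\rfloor$ even terms.

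The step I expect to be genuinely delicate is controlling representations near the boundary of the clean range, i.e.\ for values approaching and reaching $ab$, where $\phi$ ceases to be injective (there $(0,a)$ and $(b,0)$ collide). A priori a point $(1,m)$ with $m$ close to $a$ could acquire a spurious wrap‑around representation $x+y=a+mb$ whose summands do not have coordinate sums $(1,m)$. I would rule this out by a size argument: if $x\ge ab$ then $y\le a$, forcing $y=a$ and $x=mb=(0,m)$, but $(0,m)$ is not a term once $m$ is even and $\ge 4$, so both summands remain in the clean range and the bookkeeping $i_1+i_2=1$, $j_1+j_2=m$ is forced. Making this boundary analysis precise, running the simultaneous induction for the column and the line $i=1$, checking the base case $(1,4)$, and treating the endpoint $m=a$ itself are where the main work lies; once the lattice dictionary and the $\V(1,2)$ description of the column are in place, the remaining verifications are routine.
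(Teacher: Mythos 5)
Your proposal is correct, and it rests on the same central device as the paper: the coordinatization $ia+jb\mapsto(i,j)$, which the paper formalizes as a Freiman homomorphism of order $2$ from $\V$ into $\mathbb{Z}^2/(-b,a)$, injective when restricted to the box $\{(i,j):0\leqslant i<b,\ 0\leqslant j<a\}$, with even terms read off from the parity of the $b$-coefficient. Where you genuinely diverge is in how much lattice structure you compute. The paper first determines the \emph{entire} auxiliary set $\W$ (the analogue of $\V((1,0),(0,1))$ with $(2,0)$ deleted) in Lemma \ref{asymetric Ulam} --- eight families of points, with the proof omitted as ``an unpleasant exercise in induction and case exhaustion'' --- and then transfers all of it to $\V(a,b)\cap[0,ab]$. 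You observe instead that any representation of a point $(1,m)$ uses only points with first coordinate $0$ or $1$, and any representation of $(0,\ell)$ uses only the column, so the two lines you need form a closed system: the column is a copy of $\V(1,2)$ and the line $i=1$ is $\{0,1\}\cup 4\ZZ_{>0}$, by exactly the induction you sketch (your three-way case split is the correct one). This buys a self-contained proof of the lower bound that never invokes the unproved structure lemma; what it gives up is the full description of $\V(a,b)\cap[0,ab]$, which the paper wants anyway for its other corollaries and conjectures.

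Your instinct that the endpoint $m=a$ is the delicate point is exactly right, and handling it is not optional: when $4\mid a$ the largest claimed even term is $a+ab\geqslant ab$, outside the clean range, and the even terms strictly below $ab$ number only $1+\lfloor a/4\rfloor$ (indeed the paper's own corollary, which counts only terms less than $ab$, comes up one short in that case). To finish, besides your size argument (a summand $\geqslant ab$ forces the pair $\{a,ab\}$, and $ab\notin\V$ because it has at least two representations, $b+(a-1)b$ and $3b+(a-3)b$, or $2b+2b$ when $a=4$), you must also exclude clean-range representations whose coordinate sum is $(b+1,0)$, i.e.\ two summands that are both multiples of $a$; this is precisely where the hypothesis $b>2a$ enters, since $2a<b$ is what makes $2a$ (and hence every $ia$ with $i\geqslant 2$) fail to be a term. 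Without it the claim genuinely fails: in $\V(4,5)$ one has $24=12+12=9+15$, so $(1,4)$ is not a term there.
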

There is a natural extension of this type of sequence by enforcing more complicated additive relationships. One could, for example, study greedy sequences where one adds the
smallest integer that can be uniquely written as $2x + y$ for a unique choice of distinct $x,y$ already in the sequence. We call these sequences $(2,1)$-sequences and will denote
the $(2,1)$-sequence with initial terms $a,b$ by $\Z_{(2,1)}(a,b)$. Of course, there seems to be nothing special about $(2,1)$ and one could study other constellations. 

\begin{theorem}
 The sequence  $\Z_{(2,1)}(1,3)$ is given by
$$\Z_{(2,1)}(1,3)=\{3,15\}\cup(4\ZZ_{\geqslant 0}+1)\setminus{\{9\}}=\{1, 3, 5, 13, 15, 17, 21, 25,...\}$$
\end{theorem}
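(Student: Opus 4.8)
The plan is to prove the stronger statement that the whole sequence equals the explicit set
$$ S = \{1,3,5,13,15,17\}\cup\{4k+1 : k\geqslant 5\} = \{3,15\}\cup\big((4\ZZ_{\geqslant 0}+1)\setminus\{9\}\big), $$
by strong induction, matching the greedy construction against $S$ one element at a time. Because any representation $N = 2x+y$ with $x,y\geqslant 1$ forces $x,y<N$, at the moment the algorithm first scans a candidate $N$ the available terms are exactly $S\cap[1,N)$; hence it suffices to establish two facts about representations inside $S$: (i) every $N\in S$ with $N\geqslant 5$ admits a \emph{unique} ordered pair $(x,y)\in S^2$ with $x\neq y$ and $2x+y=N$, and (ii) every integer $N>3$ with $N\notin S$ admits either no such pair or at least two. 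Granting (i) and (ii), the greedy rule reproduces $S$ exactly.

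First I would fix the arithmetic skeleton. Since the seeds $1,3$ are odd and $2x+y$ is odd whenever $x,y$ are odd, an immediate induction shows every term is odd, so no even $N$ has any representation; this disposes of half of (ii). Refining modulo $4$, for odd $x$ one has $2x\equiv 2\pmod 4$, so $2x+y\equiv 1\pmod 4$ forces $y\equiv 3\pmod 4$ while $2x+y\equiv 3\pmod 4$ forces $y\equiv 1\pmod 4$. The crucial observation is that the only elements of $S$ that are $\equiv 3\pmod 4$ are $3$ and $15$. Thus for $N\equiv 1\pmod 4$ every representation must take $y\in\{3,15\}$, i.e.\ $x=(N-3)/2$ or $x=(N-15)/2$; a further split modulo $8$ shows that exactly one of these two candidates is $\equiv 1\pmod 4$ (hence eligible to lie in $S$) while the other is $\equiv 3\pmod 4$ and, once $N$ is large, is too big to equal $3$ or $15$. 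This yields a unique representation for every large $N\equiv 1\pmod 4$, giving (i) for these terms.

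For statement (ii) with $N\equiv 3\pmod 4$ and $N\notin\{3,15\}$, I would exhibit two distinct representations using the two special terms in the coefficient-$2$ slot, namely $N = 2\cdot 3 + (N-6) = 2\cdot 15 + (N-30)$; since $N-6$ and $N-30$ are both $\equiv 1\pmod 4$, each is a valid element of $S$ once $N$ is large, and the two ordered pairs are plainly different. The number $9$ is handled by hand: its only potential representation is $2\cdot 3+3$, forbidden because $x\neq y$ is required, so $9$ has zero representations and is permanently skipped — exactly the single omission from $4\ZZ_{\geqslant 0}+1$.

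Finally I would record the finitely many small values where the uniform modulo-$8$ bookkeeping degenerates — precisely those $N\leqslant 45$, where one of the displayed candidates equals $9$, coincides with its partner, or drops below $1$ — and verify them directly, confirming en route that the base segment $1,3,5,13,15,17,21,25$ is produced as claimed; for $N>45$ the asymptotic argument of the previous two paragraphs applies unconditionally. I expect the main obstacle to be exactly this edge-case analysis rather than the asymptotics: one must check that in each degenerate case the ``missing'' representation is replaced by a genuinely different valid one (so that uniqueness is preserved for members of $S$) and that no spurious unique representation surfaces for a non-member, while keeping careful track of the fact that the greedy algorithm's pool of available terms always coincides with $S\cap[1,N)$, so that the representation counts computed inside $S$ are the ones actually driving the construction.
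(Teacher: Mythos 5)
Your proposal is correct and takes essentially the same route as the paper's own proof: verify the initial segment through $45$ by hand, then run an induction whose engine is the fact that $3$ and $15$ are the only terms $\equiv 3 \pmod 4$, so each candidate $N\equiv 1\pmod 4$ has at most the two representations $2\cdot\frac{N-3}{2}+3$ and $2\cdot\frac{N-15}{2}+15$, of which a modulo-$8$ split selects exactly one. The only cosmetic differences are the explicit double representations you exhibit for excluded numbers $\equiv 3\pmod 4$ (you use $2\cdot 3+(N-6)$ and $2\cdot 15+(N-30)$, where the paper uses $2\cdot 1+a$, $2\cdot 3+(a-4)$ and $2\cdot 3+a$, $2\cdot 5+(a-4)$) and that you spell out the parity argument and the degenerate small cases, which the paper leaves implicit.
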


This suggests that $\Z_{(2,1)}$ sequences can be simple and should be contrasted with the following purely empirical observation: the sequences $\Z_{(2,1)}(1,9)$ and $\Z_{(2,1)}(3,7)$ seem to have a positive upper and lower density that are different from each other. More precisely,  $\Z_{(2,1)}(1,9)$ seems to have upper density $\sim 0.123$ and lower density $\sim 0.107$. What is possibly even more surprising is that the density seems to fluctuate in a rather regular manner (see Figure 1). The sequence $\Z_{(2,1)}(3,7)$ seems to have upper density $\sim 0.122$ and lower density $\sim 0.106$.

\begin{figure}[h!]
\includegraphics[height = 0.25\textheight]{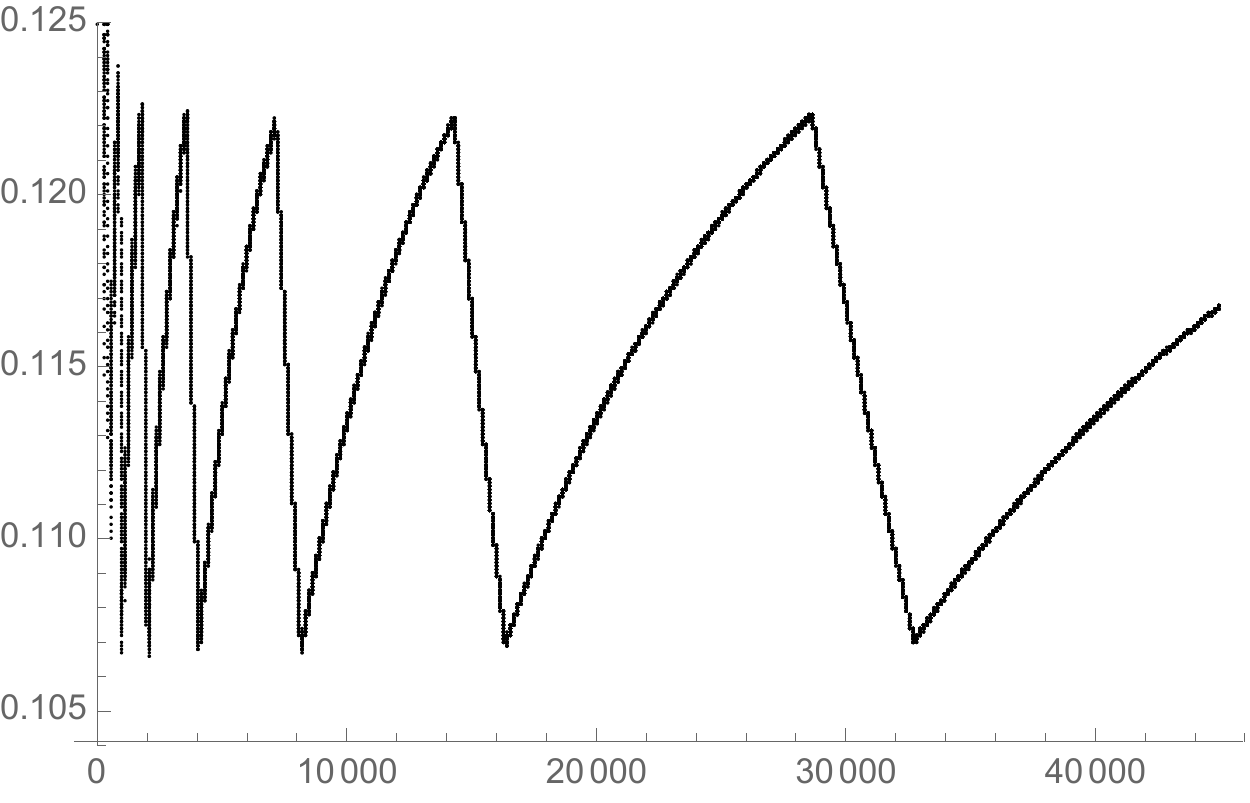}
\caption{The density of $\Z_{(2,1)}(1,9)$ on the first 45000 integers.}
\label{density19}
\end{figure}

We are not aware of any of these results being known. They do seem to indicate some rather interesting structure.

\subsection{Background.}\label{Background}  Our motivation for studying $\V-$sequences derives from recently renewed interest in a class of integer sequences defined by Stanislaw Ulam in 1964 (for reasons that are not entirely clear). An Ulam sequence $\U(a,b)$ starts with the elements $a, b$ and is then constructed by repeatedly adding the smallest integer that can be written as the sum of two \textit{distinct} earlier terms in a unique way. $\V-$sequences are defined by dropping the condition of having the earlier terms be distinct (so, in a certain sense, it should be simpler). Ulam himself only mentioned the sequence 
\begin{align*}
\U(1,2) &=\{1,2,3,4,6,8,11,13,16,18,26,...\}
\end{align*}
Ulam himself remarks that the sequence seems to be erratic but it is not entirely clear why he defined the sequence in the first place \cite{ulam_1964}. It was soon understood that different initial values can give rise to more structured sequences: some Ulam sequences have only finitely many even terms. Finch conjectured a characterization of initial conditions leading to sequences with only finitely many even terms and proved that Ulam sequences with this property become a finite union of arithmetic progressions after a finite transient phase \cite{finch_1992_1}. Some sequences on Finch's list have been shown to be regular; the regularity of others is still subject to conjecture \cite{schmerl_spiegel_1994, cassaigne_finch_1995, hinman_kuca_schlesinger_sheydvasser_2017}. There is recent renewed interest due to a curious empirical discovery of Steinerberger \cite{steinerberger_2016}: there seems to exist a real number $\lambda \sim 2.4\dots$ such that the elements of $\U(1,2)$ have strange clustering behavior in the sense of $a_n \mod\lambda$ having a non-uniform limit distribution that is compactly supported. More precisely, as was observed by Gibbs \cite{gibbs_2015}, the residues are concentrated in the middle third of the interval. Techniques developed in the study of $\V$-sequences allows us to prove two new results for Ulam sequences.

\begin{theorem}
Let $\U(a,b)$ be an Ulam sequence with $a,b$ relatively prime, and one of $a,b$ even. Then:
\begin{itemize}
\item if $a$ is even, then $\U(a,b)$ has at least $1+a/2$ even terms.
\item if $b$ is even, then $\U(a,b)$ has at least $1+b/2$ even terms.
\end{itemize}
\end{theorem}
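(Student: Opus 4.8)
The plan is to prove the slightly stronger statement that, in the case $a$ even (so that $b$ is odd, by coprimality), the entire ``first diagonal'' $a + jb$ for $0 \leq j \leq a$ lies in $\U(a,b)$. Among these, the ones with $j$ even are precisely the $1+a/2$ guaranteed even terms, since $a + jb \equiv j \pmod 2$. The case $b$ even is then symmetric, with the roles of $a$ and $b$ interchanged, yielding the diagonal $ia + b$ for $0 \leq i \leq b$ and hence $1 + b/2$ even terms.

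First I would set up the same Freiman-homomorphism bookkeeping developed for the $\V$-sequences. Every term of $\U(a,b)$ is, by construction, a nonnegative integer combination $ia + jb$, so I record each term by a lattice point $(i,j) \in \ZZ^2_{\geq 0}$ with ``value'' $ia + jb$. Since $\gcd(a,b)=1$, the smallest value admitting two distinct lattice representations is $ab$, attained at $(0,a)$ and $(b,0)$; more generally, among values at most $a(b+1)$ any collision $(i,j)$ versus $(i+b,\,j-a)$ forces one of the two points to lie on a coordinate axis, i.e. to be a pure multiple of $a$ or of $b$. The parity of a term equals the parity of $j$ (respectively $i$), so the even terms correspond exactly to the generated lattice points with $j$ even (respectively $i$ even).

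The core is an induction showing $a + jb \in \U(a,b)$ for $j = 0,1,\dots,a$, built on the recursion $a + jb = b + \bigl(a + (j-1)b\bigr)$. I would use the standard fact that an integer $m$ lies in an Ulam sequence if and only if it has a unique representation as a sum of two distinct smaller terms, so it suffices to show $a + jb$ has exactly this one representation. Translating a hypothetical representation $s + s' = a + jb$ into the lattice and invoking $\gcd(a,b)=1$ pins the coordinate sums to $(1,j)$ as the only relevant case, which forces one summand to be a pure multiple of $b$; a subsidiary lemma that the only term on the $b$-axis with value below $ab$ is $b$ itself then collapses this to the single representation $b + (a+(j-1)b)$, so that $a + jb \in \U(a,b)$.

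The main obstacle is the top of the diagonal, $j = a$, namely the boundary term $a(b+1) = ab + a$: this is exactly the value whose lattice representation ceases to be unique, since $(1,a)$ collides with the pure multiple $(b+1,0)$, so the clean integer-to-lattice dictionary degenerates precisely here. I expect the delicate point to be ruling out spurious representations arising from terms that are themselves multiples of $a$ or of $b$ yet lie off the coordinate axes (for instance $9b$ already occurs in $\U(8,3)$), which forces the subsidiary axis lemma to be restricted to the genuinely clean range $[1,ab)$ and the single boundary term $a(b+1)$ to be settled by a direct representation count. Once $a(b+1)$ is shown to be a term, the even members of the diagonal number $a/2 + 1$, completing the bound; the case of $b$ even follows by the identical argument with $a$ and $b$ exchanged.
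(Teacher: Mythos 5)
Your strategy is sound, and it certifies exactly the same witness set of even terms as the paper: the diagonal terms $a+jb$ with $j$ even, $0\leqslant j\leqslant a$ (respectively $b+ia$ with $i$ even). The route differs in a meaningful way. The paper proves that the Freiman homomorphism $f:\U\to\ZZ^2/(-b,a)$ is well defined and injective, deduces a structural equivalence between $\U\cap f^{-1}(I)$, $I=[0,b)\times[0,a)$, and the two-dimensional Ulam set $\W=\U((1,0),(0,1))$, and then cites the Kravitz--Steinerberger description of $\W$ (which contains $(1,m)$ for every $m\geqslant 0$) to read off the full structure of $\U\cap[0,ab]$; the top term $a(b+1)$ is then handled by a separate lemma proved by a mod-$a$/mod-$b$ uniqueness count. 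You keep the same lattice dictionary but replace the structural equivalence and the external structure theorem by a self-contained induction along the diagonal, using only the uniqueness of nonnegative representations below the collision threshold together with an axis lemma. Your version is more elementary and proves exactly what the count needs; what it gives up is the paper's stronger conclusion that these diagonal evens are \emph{all} the even terms up to $ab$, i.e.\ the precise determination of $\U(a,b)\cap[0,ab]\cap 2\ZZ_+$. Your handling of the boundary term is in substance the same as the paper's lemma that $a(b+1),(a+1)b\in\U$.

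There is one concrete hole to patch: you restrict the axis lemma to the clean range $[1,ab)$, but the direct representation count at $a(b+1)$ requires settling the value $ab$ itself. The candidate summand pair $\{a,ab\}$ arises from both collision representations of $a(b+1)$, via $(1,a)=(1,0)+(0,a)$ and via $(b+1,0)=(1,0)+(b,0)$; if $ab$ were a term of $\U$, then $a(b+1)=a+ab$ would be a second representation alongside $b+\bigl(a+(a-1)b\bigr)$, the boundary term would fail to enter $\U$, and your count of $1+a/2$ even terms would collapse to $a/2$. So you must prove $ab\notin\U$, which sits exactly at the edge of your clean range; the paper does this explicitly as the closing step of its axis lemma. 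The repair uses only your own tools: $ab$ has precisely two nonnegative lattice representations, $(b,0)$ and $(0,a)$, both on the axes, so any Ulam representation of $ab$ splits it into two distinct pure multiples of $a$, or two distinct pure multiples of $b$, each of value below $ab$, and the clean-range axis lemma (only $a$, respectively $b$, survives there) excludes every such pair. The same induction, run upward from $2b$, also proves the subsidiary axis lemma you left unproven, since any representation of a value $lb<ab$ can involve only pure multiples of $b$. Finally, a caveat you share with the paper: in the $b$-even bullet with $a=1$ the box $I$ has empty interior and the pinning argument degenerates (every integer $\geqslant b$ has two nonnegative representations); the claim is easy to verify directly there, but strictly speaking both your argument and the paper's (whose axis lemma explicitly assumes $a\neq 1$) require $a\geqslant 2$.
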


\begin{theorem}
Let $\U(a,b)$ be an Ulam sequence with $a,b$ relatively prime. Then: 
\begin{itemize}
\item for any $c>(b+1)a$, $\{c, c+a, ..., c+ba\}\subset \U\implies c+(b+1)a\notin \U$
\item for any $c>b(a+1)$, $\{c, c+b, ..., c+ab\}\subset \U\implies c+(a+1)b\notin \U$
\end{itemize}
\end{theorem}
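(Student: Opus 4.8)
The plan is to reduce the claim to producing two distinct representations of a single integer and to read off the conclusion from the membership criterion for Ulam sequences. Since $\U(a,b)$ depends only on the unordered pair $\{a,b\}$, we have $\U(a,b)=\U(b,a)$, so the second bullet is the first with $a$ and $b$ interchanged, and it suffices to treat the first. Recall that for $m$ larger than the initial terms, $m\in\U$ if and only if $m$ admits exactly one representation $m=u+v$ with $u<v$ and $u,v\in\U$. Put $N=c+(b+1)a$. As $a\in\U$, $c+ba\in\U$ by hypothesis, and $a\ne c+ba$, the identity $N=a+(c+ba)$ is already one representation; hence $N$ has at least one, and the entire statement reduces to exhibiting a \emph{second} representation, which forces $N\notin\U$.

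First I would use $c>(b+1)a$ to localise representations. The smallest sum of two distinct terms of $\{c,c+a,\dots,c+ba\}$ is $2c+a>c+(b+1)a=N$, so no representation of $N$---or of any $c+ja$---uses two of these large terms; in every such representation the smaller summand lies below $c$. Consequently, for each $1\leqslant j\leqslant b$ the decomposition $c+ja=a+(c+(j-1)a)$ is a valid representation of $c+ja\in\U$, and by uniqueness it is the only one. This yields a family of exclusions: for every $s\in\U$ with $s\ne a$ lying below the midpoint of $c+ja$ we must have $c+ja-s\notin\U$, since otherwise $\{s,c+ja-s\}$ would be a second representation of $c+ja$. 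In particular $c+ja-b\notin\U$, and, using that $a+b\in\U$ is the third term of $\U(a,b)$, one also gets $c+ja+b\notin\U$. Writing a hypothetical second representation of $N$ as $N=w+(N-w)$ and rearranging $N-w=(c+ba)-(w-a)$, the exclusions attached to $c+ba$ show that any second representation must have $w\in\U$ with $w-a\notin\U$---so that $w$ begins a run of common difference $a$---and $N-w\in\U$.

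The crux, and the step I expect to be the main obstacle, is to actually \emph{construct} such a $w$. In contrast to $a+(c+ba)$, this second representation is not supplied by the initial data: its summands sit at an intermediate scale, in the window $(N/2,c)$, where the hypothesis gives no direct information---the only large elements of $\U$ we control are the run terms $c+ja$ themselves, and these produce only the first representation. My plan is to bring in the second generator together with the maximal $a$-run carrying the progression: assuming for contradiction that $N\in\U$, the exclusions above extend to $j=b+1$ and pin down the representations of the neighbouring terms, after which I would run a descent along the run---using repeatedly that $b,a+b\in\U$ and the shift identities relating $a+(c+ja)$ to translates by $b$---to locate an admissible $w$ with $w\in\U$, $w-a\notin\U$ and $N-w\in\U$, contradicting the uniqueness of the representation of $N$. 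The genuinely delicate point is controlling which intermediate-scale integers belong to $\U$; I expect this to require an induction on the construction of $\U$ that tracks local run structure, and it is here that the length $b+1$ should prove decisive: a run of $b$ or fewer steps of difference $a$ leaves the relevant representations unique, whereas $b+1$ consecutive terms ought to exhaust the representations reachable through the generator $b$ and thereby force the $(b+2)$-nd term out of $\U$.
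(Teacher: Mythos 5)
Your reductions are sound as far as they go: by symmetry it suffices to treat the first bullet, $N=c+(b+1)a$ carries the representation $a+(c+ba)$, and everything hinges on producing a second representation. But the proposal stops exactly where the proof has to happen: the ``descent along the run'' that is supposed to manufacture an admissible $w$ is never carried out, and the framing that precedes it is mistaken. You assert that the run terms $c+ja$ ``produce only the first representation'' and that any second representation must have its summands at an intermediate scale in the window $(N/2,c)$. Neither is true: the run term with $j=0$, namely $c$ itself, pairs with the small number $N-c=(b+1)a$, and this is precisely the second representation the paper uses, $c+(b+1)a=(c+ba)+a=c+\left[(b+1)a\right]$. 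The entire content of the theorem is thus the single fact that $(b+1)a$ (respectively $(a+1)b$) is itself an Ulam number --- a fact your argument never addresses, and which you explicitly set aside as ``the main obstacle'' with only a speculative plan in its place.

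That fact is nontrivial and is where the paper does its work: Lemma \ref{a(b+1), (a+1)b} proves $a(b+1)\in\U$ by transporting the known structure of $\W=\U((1,0),(0,1))$ into $\U(a,b)$ via a Freiman homomorphism of order $2$, which gives $(a-1)b+a\in\U$ while $ka\notin\U$ for $2\leqslant k\leqslant b$ and $lb\notin\U$ for $2\leqslant l\leqslant a$; a short congruence computation then shows $a(b+1)=b+\left[(a-1)b+a\right]$ is the unique representation. Observe also that the paper's argument uses only the endpoints $c$ and $c+ba$ of the hypothesized progression, never the intermediate terms, so your exclusions $c+ja-b\notin\U$ and the like are correct but irrelevant; and your guess that a run of length $b+1$ ``exhausts the representations reachable through the generator $b$'' is not the mechanism at all --- the length $b+1$ matters only because $(b+1)a$ is the first multiple of $a$ beyond $a$ itself that belongs to $\U$. (Indeed, the paper notes the generalization: whenever $x,nx\in\U$, no progression of difference $x$ and length $n$ starting above $nx$ can be extended.) Without a proof that $(b+1)a\in\U$, your argument has a genuine gap, and the window-$(N/2,c)$ route you sketch points away from the idea that closes it.
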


\subsection*{Notation}
 $\U$ always refers to Ulam sequences, and $\U(a,b)$ will denote the Ulam sequence generated by $a$ and $b$. $\V$ refers to $\V$-sequences while $\V(a,b)$ will denote the $\V$-sequence generated by $a$ and $b$. Finally, $\Z_{(a_1,...,a_n)}(b_1,...,b_n)$ denotes $(a_1,...,a_n)$-set generated by $(b_1,...,b_n)$. Note that in the literature, $\Z_{(1,...,1)}$, where $(1,...,1)$ consists of $n$ 1's, are also known as $(1,n)$-additive \cite{queneau_1972}\cite{finch_1992_1}. In particular, $\U=\Z_{(1,1)}$ is also called a $(1,2)$-additive sequence whereas $\Z_{(1,1,1)}$ is called a $(1,3)$-additive sequence.

\section{Basic Properties of $\V$-sequences}
We start with a basic observation.
\begin{theorem}\label{First two sequences} 
\begin{align*}
& 1) \;\V(1,2)=(2\ZZ_+-1)\cup\{2\}=\{1,2,3,5,7,...\} \\
& 2) \;\V(1,3)=(2\ZZ_+-1)\cup\{4\}=\{1,3,4,5,7,...\}
\end{align*}
\end{theorem}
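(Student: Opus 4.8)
The plan is to prove both identities by the same two-step method: first compute a short initial segment of each sequence by hand until it reaches a ``steady state,'' and then run a parity-based induction showing that from that point on the sequence appends consecutive odd integers and never adds another even number. The guiding observation is that both target sets have the shape ``all positive odd integers together with a single even integer'' (namely $2$ for $\V(1,2)$ and $4$ for $\V(1,3)$), and that a sum of two terms is odd precisely when exactly one summand is odd and the other even. Since the only even term present is the distinguished one, every odd number must be represented as (that even term) $+$ (an odd term), which forces uniqueness; meanwhile every sufficiently large even number splits as a sum of two odd terms in at least two ways, which forces non-uniqueness and hence rejection.

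Concretely, for $\V(1,2)$ I would verify directly that the sequence begins $1,2,3$, and then adopt the inductive hypothesis that after finitely many steps the sequence equals $\{2\}\cup\{1,3,5,\dots,2k-1\}$ with last term $2k-1$ and with $2$ as its only even element. The step then examines the two candidates $2k$ and $2k+1$. I would show that $2k$ has at least two representations as a sum of two earlier terms (for instance $2k=1+(2k-1)=3+(2k-3)$ when $k\geq 3$, and $4=1+3=2+2$ when $k=2$), so it is rejected, whereas $2k+1=2+(2k-1)$ is the \emph{only} representation of $2k+1$ because its odd parity forces one summand to be the unique even term $2$; hence $2k+1$ is appended and the invariant (single even term, odds filled in up to the new maximum) is preserved. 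The argument for $\V(1,3)$ is identical in form: I would check the prefix $1,3,4,5$ by hand, then carry the hypothesis that the sequence equals $\{4\}\cup\{1,3,\dots,2k-1\}$ with $4$ the sole even term, and verify that each even $2k\geq 6$ admits at least two odd-plus-odd representations while each odd $2k+1=4+(2k-3)$ has exactly one.

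The step I expect to require the most care is making the induction self-consistent: the uniqueness of the odd representations depends on there being exactly one even term, but ``exactly one even term'' is itself part of what the theorem asserts. I would therefore fold the statement ``the only even element so far is $2$ (resp.\ $4$)'' into the inductive hypothesis and re-establish it at each step, so that every count of representations is taken against a fully determined prefix. The remaining checks---that the two exhibited representations of an even candidate are genuinely distinct, and that the single odd summand $2k-1$ (resp.\ $2k-3$) is already present in the sequence---are routine once the hypothesis is stated precisely, the only genuine edge cases being the small values $4$ and $6$, where one of the representations comes from $2+2$ (resp.\ involves $3+3$) rather than from two distinct odd terms.
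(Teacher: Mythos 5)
Your proposal is correct and follows essentially the same route as the paper's proof: a parity-based induction in which each odd candidate $2k+1$ has exactly one representation (even summand forced to be the unique even term $2$, resp.\ $4$), while each even candidate $2k\geqslant 6$ is rejected via the two representations $1+(2k-1)$ and $3+(2k-3)$, with the small cases ($4=1+3=2+2$, the hand-checked prefixes) treated separately. The paper states this more tersely, leaving the inductive bookkeeping implicit, whereas you make the invariant explicit; the substance is identical.
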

\begin{proof}
For $\V(1,2)$, each odd number $2a+1\geqslant 3$ has a unique representation $2a+1=(2a-1)+2$. $4=2+2=3+1$, and each even number $2a\geqslant 6$ has at least two representations $2a=(2a-1)+1=(2a-3)+3$. By induction, we get the desired form of the sequence. For $\V(1,3)$, we start the sequence with $1,3, 4=1+3$. Then each odd number $2a+1\geqslant 5$ has a unique representation $2a+1=(2a-3)+4$, and each even number $2a\geqslant 6$ has at least two representations $2a=(2a-1)+1=(2a-3)+3$. By induction, we get the desired form of the sequence.
\end{proof}

Seeing that the first two $\V$-sequences both eventually become arithmetic progressions of consecutive odd numbers, one might be tempted to ask if other $\V$-sequences having two generators exhibit the same behavior. The answer is no.

\begin{theorem}\label{Regularity of modified Ulam set}
Let $\V=\V(a,b)$ be a $\V$-sequence, and suppose there exists $2c+1\in\ZZ_+$ s.t. $2c+1,2c+3,2c+5,...\in\V$. Then $\V=\V(1,2)$ or $\V=\V(1,3)$. 
\end{theorem}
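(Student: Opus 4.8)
The plan is to exploit parity throughout. By hypothesis $\V$ contains every odd number from $2c+1$ onward, so I would begin by counting the representations of a large odd number $N\in\V$. Since $N$ is odd, any representation $N=x+y$ with $x,y\in\V$ pairs an even term with an odd term; and because all odd numbers in $[2c+1,N)$ lie in $\V$, for $N$ large each even term $e\in\V$ contributes exactly one representation $N=e+(N-e)$, with $N-e$ a large odd term, and no other representations occur. Hence the number of representations of $N$ equals the number of even terms of $\V$. Since a large odd $N$ is not one of the two generators, it was adjoined to $\V$ via a \emph{unique} representation, so $\V$ must have finitely many even terms, and in fact exactly one: none would leave large odds with no representation, two would produce two representations. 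Call this even term $2e_0$ with $e_0\geqslant 1$.

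Next I would propagate information downward. For every large odd $N\in\V$ its forced unique representation is $N=2e_0+(N-2e_0)$, so $N-2e_0\in\V$; thus lowering $N$ by $2e_0$ stays inside $\V$, and the range of odd numbers known to lie in $\V$ extends further and further down. To make this precise let $m$ be the least odd number with $\{m,m+2,m+4,\dots\}\subseteq\V$, so $m\leqslant 2c+1$, and I claim $m=1$. Suppose instead $m\geqslant 3$, so that $m-2\notin\V$. The number $M:=(m-2)+2e_0$ is odd and $\geqslant m$, hence $M\in\V$; were $M$ not a generator, its unique representation would again be $M=2e_0+(m-2)$, forcing $m-2\in\V$, a contradiction. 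Therefore $M$ must be one of the two generators, i.e. one of the two smallest elements of $\V$ (the sequence being strictly increasing, its two generators are exactly its two smallest elements).

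The finish is a counting contradiction against $M$ being so small. Strictly below $M$ the sequence already contains the even term $2e_0$ together with the $e_0-1$ odd numbers $m,m+2,\dots,m-4+2e_0$, hence at least $e_0$ elements. If $e_0\geqslant 2$ this puts at least two elements below $M$, so $M$ is at least the third smallest element and cannot be a generator, a contradiction. If $e_0=1$ then $M=m$, and since $2<m$ lies below $M$ while $M$ is a generator, $m$ must be the second smallest element; the generators are then exactly $2$ and $m$, so $\V=\V(2,m)$ with $m\geqslant 3$ odd, but then $4=2+2$ is adjoined as a second even term, contradicting uniqueness of the even term. Hence $m=1$ and $\V=\{1,3,5,\dots\}\cup\{2e_0\}$. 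Reading off the two smallest elements as the generators: if $e_0\geqslant 3$ they are $1,3$, so $\V=\V(1,3)$, which by Theorem~\ref{First two sequences} has even term $4\neq 2e_0$, impossible; thus $e_0\in\{1,2\}$, giving $\V=\V(1,2)$ when $e_0=1$ and $\V=\V(1,3)$ when $e_0=2$.

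The main obstacle is the middle step: controlling the downward propagation as it reaches the bottom of the sequence, where the two generators need not obey the unique-representation rule and can a priori block the argument. The resolution is to isolate the single largest missing odd number $m-2$, show that the one odd number $M=(m-2)+2e_0$ it produces must be a generator, and then defeat this with the positional count. Making that bookkeeping uniform across the cases $e_0=1$, $e_0=2$, and $e_0\geqslant 3$ is where the care is required, the $e_0=1$ case being special in that it needs the separate observation that $\V(2,m)$ always spawns the extra even term $4$.
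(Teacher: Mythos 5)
Your overall strategy is sound and closely parallels the paper's own proof: both establish that $\V$ has exactly one even term $2e_0$, both use minimality of the odd tail to show that $M=(m-2)+2e_0$ must be a generator, and both then exploit the fact that a $\V$-sequence has only two generators (your positional count for $e_0\geqslant 2$, and your appeal to Theorem \ref{First two sequences} to force $e_0\in\{1,2\}$ at the end, are valid variants of the paper's observation that $m,m+2,\dots,m+2e_0-2$ would all have to be generators).

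However, your case $e_0=1$ contains a genuine error. Having identified the generators as $2$ and $m$ with $m\geqslant 3$ odd, you claim that ``$4=2+2$ is adjoined as a second even term.'' This fails for every $m\geqslant 5$: by definition the sequence $\V(2,m)$ is increasing with $a_1=2$, $a_2=m$, and each new term must be larger than the preceding term, so the integer $4<m$ is never a candidate for membership no matter how many representations it has (indeed $4\notin\V(2,5)=\{2,5,7,9,10,11,\dots\}$; your claim is true only for $m=3$). The step is easily repaired, and the repair is exactly the paper's move: double the odd generator instead. In this case $\V=\{2\}\cup\{m,m+2,m+4,\dots\}$, so $2m=m+m$ has $(m,m)$ as its only representation (any other odd pair would require an odd term below $m$, of which there are none, and an even pair would require $2m-2$ to equal the unique even term $2$); since $2m$ exceeds both generators, the greedy process does reach it and adjoins it, producing a second even term and the desired contradiction. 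The same caution applies to any argument of this type: a witness adjoined by a uniqueness-of-representation argument must be larger than both generators, or the greedy construction never considers it.
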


First note that we cannot have a $\V$-sequence that would contain all elements of the form $2c,2c+2,2c+4,...$ for some $c\in\ZZ_+$ because then $4c+4=(2c+2)+(2c+2)=(2c+4)+2c$ would not be in the set because of multiple representations.
\begin{proof}
Suppose $2c+1$ is a minimal number s.t. $2c+1,2c+3,2c+5,...\in\V$. First note that $\V$ must contain precisely one even number. It must contain at least one even number because numbers of the form $2c+2n+1$ are all odd, hence they need an even summand. If however there are two even numbers $2k, 2l\in\V$, $k<l$, then $2c+2l+1=(2c+1)+2l=(2(c+l-k)+1)+2k$ has two representations. Let $2k$ be the unique even number in $\V$. Then all numbers of the form $2c+2n+1$ are either generators or have the representation $2c+2n+1=(2(c+n-k)+1)+2k$. $2(c+k-1)+1$ must be a generator, otherwise its representation would be $2(c+k-1)+1=(2(c-1)+1)+2k$, contradicting the minimality of $2c+1$. Thus all of $2c+1,2(c+1)+1,...,2(c+k-1)+1$ are generators. Assuming as we did that $\V$ has only two generators, this requires that $k=1$ or $2$. If $k=1$, then $2$ and $2c+1$ are the generators. Unless $2c+1=1$, $4c+2=(2c+1)+(2c+1)$ would be an even term distinct from $2$; hence the case $k=1$ implies that $\V=\V(1,2)$. If $k=2$, i.e. $2k=4$, then $2c+1$ and $2c+3$ have to be the generators. $4c+2=(2c+1)+(2c+1)$ will be a second even term unless $4c+2\leqslant4$, in which case $2c+1=1$, $2c+3=3$. This gives the case $\V(1,3)$.

%In particular, $2a+1=(2(a-k)+1)+2k$, and similarly for $2(a+1)+1,...,2(a+k-1)+1$. We will try to show that $2a+1=1$.
%Suppose not. Then none of the elements $1,3,...,2a-1$ is in $\V$ by minimality of $2a+1$. Hence $2a+1$, ..., $2(a+k-1)+1$ are all among the generators, together with $2k$. So $\V=\V(2k,2a+1,2a+3,...,2(a+k-1))$. Then $4k=2k+2k\in\V$ unless $4k\geqslant (2a+1)+(2a+1)=4a+2$. We must thus have $k>a$. If $k\neq 4a+2$, then $4a+2=(2a+1)+(2a+1)\in\V$ because it has precisely one representation. So we require $k=4a+2$. Then $4a+4=(2a+1)+(2a+3)\in\V$ because it also has one representation. But then we have two even elements in $\V$, $4a+4$ and $4a+2$, so we obtain a contradiction.
%Hence $2a+1=1$, then $1,3,5,...\in\V$. If $2\in\V$, then we obtain $\V=\V(1,2)$. If $2\notin\V$, then 3 is one of the generating elements of the sequence. Since 1 and 3 are among the generators, we have $4=1+3\in\V$ and then $\V=\V(1,3)$, i.e. $\V$ has no other nontrivial generators. 
\end{proof}
If we do not insist that $\V$ has two generators, we can find other sequences that eventually become an infinite arithmetic progression of period 2. For instance, $\V(1,2,3,9,11)$ and $\V(3,4,5,7)$ have this property. This phenomenon is however rare - it is more common for $\V$-sequences to eventually become a union of arithmetic progressions. We will call such sequences regular.

\begin{definition}\label{Definition of Regular modified Ulam sequences}
We say that an increasing integer sequence $(a_n)_{n=1}^\infty$ is regular if $(a_n)_{n=n_0}^\infty$ can be written as the finite union of arithmetic progressions for some $n_0\in\NN$. Equivalently, $(a_{n+1}-a_n)_{n=n_0}^\infty$ is periodic.
\end{definition}

Finch proved a sufficient condition for an Ulam sequence to be periodic \cite{finch_1992_2, finch_1992_1}. The same criterion holds for $\V$-sequences and, interestingly, Finch's proof
never actively uses the requirement that sums be distinct and carries over verbatim.

\begin{theorem}[Finch]\label{Finch's criterion}
A $\V$-sequence with only finitely many even terms is regular.
\end{theorem}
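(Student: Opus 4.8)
The plan is to exploit the observation that, beyond the last even term, membership in $\V$ is governed by a deterministic recurrence with bounded memory, so that the ``state'' of the sequence lives in a finite set and its trajectory is therefore eventually periodic. First I would record the reformulation of the greedy rule that I will use throughout: since $\V$ is strictly increasing and built greedily, for every integer $t$ larger than the two generators the set $\V\cap[1,t)$ is already determined before $t$ is tested, and $t\in\V$ if and only if $t$ has exactly one representation $t=x+y$ with $x,y\in\V$ and $x,y<t$ (counting unordered pairs). Let $E=\{e_1<\dots<e_k\}$ be the finite set of even terms, let $M=e_k$ be its largest element, and fix a threshold $T_0>2M$ above which no even terms occur. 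For odd $t>T_0$, parity forces any representation $t=x+y$ to have one even and one odd summand; the even summand must lie in $E$, so the number of representations of $t$ equals $r(t):=\lvert\{e\in E: t-e\in\V\}\rvert$, with each $t-e$ odd. Hence for odd $t>T_0$ we obtain the clean rule
\[
t\in\V\iff r(t)=1.
\]

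Next I would observe that $r(t)$ depends only on the membership in $\V$ of the odd integers $t-e_1,\dots,t-e_k$, all of which lie in the window $[t-M,t-2]$ of bounded width $M$ below $t$. I would then define the \emph{state} $\sigma_t\in\{0,1\}^{\lceil M/2\rceil}$ to be the indicator vector recording, for each odd integer $s$ with $t-M\leqslant s<t$, whether $s\in\V$. The rule above computes $\mathbf 1[t\in\V]$ from $\sigma_t$, and appending this bit while discarding the entry for the odd integer $t-M$ produces exactly $\sigma_{t+2}$. Thus the passage $\sigma_t\mapsto\sigma_{t+2}$ is a deterministic self-map.

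Since the state space $\{0,1\}^{\lceil M/2\rceil}$ is finite and the dynamics are deterministic, the trajectory $(\sigma_{T_0+2j})_{j\geqslant 0}$ must eventually enter a cycle: once a state repeats, the sequence of states is periodic thereafter with some period $p$. Consequently the membership pattern of the odd integers in $\V$ is eventually periodic with period $2p$; that is, past some point $\V$ consists only of odd integers whose membership is periodic modulo $2p$, hence a finite union of arithmetic progressions. Adjoining the finitely many even terms cannot destroy this structure, so $\V$ is regular in the sense of Definition \ref{Definition of Regular modified Ulam sequences}.

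The point requiring the most care — the main obstacle, such as it is — is the reduction establishing that $t\in\V\iff r(t)=1$ genuinely holds for \emph{all} sufficiently large odd $t$: one must check that no representation of a large odd $t$ can involve a summand outside $E$ together with an odd element of $\V$, which is exactly the parity argument, and that no even term lurks above $T_0$, which is precisely where the hypothesis of finitely many even terms enters. (If $E=\varnothing$ the rule forces $r(t)=0$ for all large odd $t$, so $\V$ is finite and the conclusion is trivial; the substantive case has $E$ nonempty, which also guarantees the infinitude of odd terms.) Everything after this reduction is the soft fact that a deterministic map on a finite set has eventually periodic orbits.
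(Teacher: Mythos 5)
Your proof is correct and is essentially the argument the paper relies on: the paper does not reproduce a proof but invokes Finch's original one, noting it ``carries over verbatim'' to $\V$-sequences, and Finch's argument is exactly your reduction (every large odd $t$ satisfies $t\in\V\iff$ exactly one $t-e$, $e\in E$, lies in $\V$) followed by the bounded-memory, finite-state pigeonhole step giving eventual periodicity. Your observation that odd $t$ forces distinct summands is precisely why the distinctness hypothesis is never needed, which is the point the paper makes.
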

 Finch also conjectured which Ulam sequences have finitely many even terms.
Finch's theorem that finitely many even terms implies regularity is a statement in one direction. We conjecture that under suitable circumstances, the other direction is also true. We do not have an intuition for why the converse would be true, but there is no known counterexample to the contrary. 

\begin{conjecture}\label{Finch - second direction}
Let $a<b$ be relatively prime positive integers. Then the Ulam sequence $\U(a,b)$ (respectively, the $\V$-sequence $\V(a,b)$) is regular if and only if $\U(a,b)$ (respectively, $\V(a,b)$) has finitely many even terms. 
\end{conjecture}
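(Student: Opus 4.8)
Since one implication is precisely Finch's Theorem~\ref{Finch's criterion} (finitely many even terms implies regularity, for both $\U$- and $\V$-sequences), the entire content of the conjecture lies in the converse, and the plan is to attack it in contrapositive form: a sequence with infinitely many even terms is not regular. Suppose for contradiction that $S$ (either $\U(a,b)$ or $\V(a,b)$) is regular with infinitely many even terms. By Definition~\ref{Definition of Regular modified Ulam sequences} there are $P\in\NN$ and $n_0$ such that for $n\geqslant n_0$ membership of an integer in $S$ depends only on its residue modulo $P$; let $R\subseteq\ZZ/P\ZZ$ be the set of residues that are eventually occupied. Because $S$ has infinitely many even terms, the even residues contribute to $R$: either $P$ is even and $R$ contains an even residue, or $P$ is odd, in which case every occupied class $r+P\ZZ$ automatically alternates in parity and already supplies infinitely many even terms.

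The engine of the argument would be a collision lemma generalizing the remark preceding Theorem~\ref{Regularity of modified Ulam set}. Fix an occupied residue $r\in R$ and a large target $z\equiv 2r\pmod P$, say $z=2r+Pk$. In the $\V$-setting one can write $z=(r+Pi)+(r+Pj)$ for every decomposition $i+j=k$ with $r+Pi,\,r+Pj$ large; since $r$ is occupied, all such summands lie in $S$, and there are on the order of $k/2$ essentially distinct decompositions. Hence $z$ admits many representations and is excluded from $\V$. In the $\U$-setting the same count applies once one restricts to pairs with $i<j$, which give representations as sums of two \emph{distinct} elements; a run of length five in the class $r$ already forces two such representations. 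Either way, no large integer of residue $2r\bmod P$ can belong to $S$, so $2r\notin R$. This yields the clean structural constraint
\begin{equation*}
R\cap 2R=\varnothing,\qquad 2R:=\{\,2r\bmod P: r\in R\,\}.
\end{equation*}

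To exploit parity I would refine the modulus: a sequence regular with period $P$ is also a union of arithmetic progressions of common difference $2P$, so I pass to the occupied set $R'\subseteq\ZZ/2P\ZZ$, whose reduction modulo $P$ is $R$ and which now records the actual parity of large terms. The collision lemma applied modulo $2P$ shows that $2\rho\bmod 2P$ is unoccupied for every $\rho\in R'$, and every such doubled residue is even. Thus the even occupied residues modulo $2P$ are disjoint from the doubled set $2R'$, while the hypothesis of infinitely many even terms says that some even residue is occupied. The aim is to turn this incompatibility into an outright contradiction.

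This last step is where I expect the real difficulty to concentrate, and I do not believe it can be forced through by the counting above alone. The constraint $R\cap 2R=\varnothing$ limits the occupied residues but does not by itself forbid an occupied even class, and the doubling map cannot be iterated, since the collision lemma requires the source residue to be occupied. Worse, the converse is at least as hard as a well-known open problem: the even terms of $\U(1,2)$ are believed to be infinite, so a proof that infinitely many even terms preclude regularity would in particular establish the (still open) non-regularity of $\U(1,2)$. For this reason I would treat a full proof as out of reach with these tools and would instead aim at conditional or special-case versions — proving the converse for the families in Theorem~\ref{(2,n) theorem 1}, or ruling out odd periods $P$ outright — where the residue bookkeeping above can actually be made to close.
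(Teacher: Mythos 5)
The statement you were asked to prove is a \emph{conjecture} in the paper, not a theorem: the authors explicitly write that they ``do not have an intuition for why the converse would be true,'' offer no proof, and note only the absence of counterexamples, so there is no paper argument to compare yours against. Your proposal is well calibrated to that reality. The direction you dispose of immediately is indeed exactly Finch's theorem, quoted in the paper as Theorem~\ref{Finch's criterion}, and your collision lemma is correct: if a residue class $r \bmod P$ is eventually fully occupied, then every sufficiently large integer congruent to $2r \bmod P$ admits many representations (restricting to pairs with distinct parts handles the Ulam case), hence is excluded, which gives $R \cap 2R = \varnothing$. This is a genuine, if modest, necessary condition for regularity, and it is in the same spirit as the paper's own remark preceding Theorem~\ref{Regularity of modified Ulam set}, that no $\V$-sequence can contain a full even progression $2c, 2c+2, 2c+4, \dots$ because of the resulting double representations. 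Your diagnosis of where the argument dies is also exactly right: $R \cap 2R = \varnothing$ is perfectly consistent with an occupied even class (for instance $R = \{4\}$ modulo $P = 10$ passes the test, with $2R = \{8\}$), and the doubling map cannot be iterated because the lemma needs the source class, not the target class, to be occupied. So your refusal to claim a proof is the correct conclusion, not a defect.

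One logical slip is worth flagging: proving the contrapositive would \emph{not} ``in particular establish the non-regularity of $\U(1,2)$.'' The infinitude of the even terms of $\U(1,2)$ is itself only conjectural, so a proof of the converse direction would merely link two open problems --- it would show that $\U(1,2)$ is regular if and only if it has finitely many even terms, leaving both sides undecided. This actually strengthens, rather than weakens, your broader point that the converse is hard and should not be expected to fall to residue bookkeeping alone; and your proposed retreat to special families such as those in Theorem~\ref{(2,n) theorem 1}, where the paper really does settle the even-term count and deduce regularity via Theorem~\ref{Finch's criterion}, is precisely the kind of partial progress the paper itself achieves.
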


\section{The Regularity of $\V(2,n)$}

We now prove our main result about the regularity of $\V(2,n)$. We recall the main statement as well as the main conjecture for the convenience of the reader.

\begin{theorem}\label{(2,n) theorem}
Let $\V:=\V(2,n)$, and $n\geqslant 5$ odd.
\begin{enumerate}
\item If $n-1$ is a power of 2, then $\V$ has at least three even terms: $2,2n,2n^2+2$.
\item Otherwise $\V$ has exactly two even terms: $2$ and $2n$. 
\end{enumerate}
\end{theorem}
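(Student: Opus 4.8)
The plan is to analyze $\V:=\V(2,n)$ one parity class at a time, exploiting the fact that when only two even terms are available the odd terms are forced to obey a linear recurrence over $\FF_2$. First I would pin down the initial segment. For $n\geqslant 5$ odd, the terms $2,n$ force $n+2=2+n$, and a direct check shows that every even number in $(n,2n)$ has no representation while every odd $n+2k<2n$ is uniquely $2+(n+2k-2)$; hence $\V$ opens with the block $n,n+2,\dots,2n-1$, after which $2n=n+n$ is uniquely represented (the summands $2n-2$ and $n-2$ are not terms) and enters as the second even term. This establishes $\{2,2n\}$.

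Now index the odd integers $\geqslant n$ by $i\mapsto n+2i$ and let $s(i)\in\{0,1\}$ record membership in $\V$. As long as $2$ and $2n$ remain the only even terms, an odd number $n+2i$ has exactly the two candidate representations $2+(n+2(i-1))$ and $2n+(n+2(i-n))$, so its number of representations is $s(i-1)+s(i-n)$, whence $s(i)=s(i-1)\oplus s(i-n)$ for $i\geqslant n$, with $s(0)=\dots=s(n-1)=1$. Solving over $\FF_2$ gives $\sum_i s(i)x^i=1/(1+x+x^n)$; arranging $s$ in rows of length $n$ turns the rule into the ``cell $=$ up XOR left'' automaton, which is exactly the mechanism producing the Sierpi\'nski triangle, and since the recurrence is invertible $s$ is purely periodic, which is what will eventually yield regularity via Theorem \ref{Finch's criterion}. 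The whole argument is a bootstrap: this recurrence is valid only below the first new even term, so I would prove by strong induction on even $E>2n$ that $E\notin\V$. Writing $E=2n+2N$, the representations of $E$ are the even$+$even ones (which occur only for $E\in\{4,2n+2,4n\}$ and are checked by hand) together with the odd$+$odd pairs $(n+2i)+(n+2j)=E$, i.e.\ pairs with $i+j=N$ and $s(i)=s(j)=1$. Everything thus reduces to the autocorrelation count $c(N)=\#\{(i,j):0\leqslant i\leqslant j,\ i+j=N,\ s(i)=s(j)=1\}$, and the task becomes showing $c(N)\neq 1$ for every $N$, with the single exception $N=n^2-n+1$ (i.e.\ $E=2n^2+2$) when $n-1$ is a power of $2$.

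The dichotomy is located at this critical $N=n^2-n+1=n(n-1)+1$. Here the only possible ``extreme'' pair is $(i,j)=(0,\,n^2-n+1)$, giving the representation $2n^2+2=n+(2n^2-n+2)$; I would show that $s(n^2-n+1)=1$ always holds, and that this pair is the unique one precisely when $n-1=2^k$ (for $n=5$ one checks that $(0,21)$ is the only pair with $i+j=21$, whereas for $n=7$ already $(0,43)$ and $(1,42)$ both qualify, so $c(43)\geqslant 2$). For all other $N$ one must rule out $c(N)=1$: the range $N\leqslant n-1$ is immediate since $c(N)=\lfloor N/2\rfloor+1$, the lone value $N=1$ being rescued by the even$+$even representation of $2n+2$, and the remaining ranges are handled through the self-similar block structure of $s$.

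The main obstacle is precisely this control of $c(N)$. A naive parity computation does not suffice: over $\FF_2$ one has $S(x)^2=S(x^2)$, so $c(N)$ is pinned only modulo $2$, which cannot by itself separate the value $1$ from $0$ or from $\geqslant 2$. The genuine work is therefore a carry/Lucas analysis of the Sierpi\'nski pattern generated by $1/(1+x+x^n)$: one must understand exactly how the $1$-cells on the anti-diagonal $i+j=N$ pair off, and show that outside the single self-similar alignment at $N=n^2-n+1$ they always occur in multiples (or not at all), with that alignment collapsing to a unique pair exactly when $n-1$ is a power of $2$. Proving part (1) then only requires exhibiting the representation at $2n^2+2$ once $c(N)\neq1$ has been secured for $1\leqslant N\leqslant n^2-n$, while part (2) requires the stronger statement that $c(N)\neq1$ for \emph{all} $N$.
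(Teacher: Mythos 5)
Your setup is sound, and it is in fact the same framework the paper uses: your recurrence $s(i)=s(i-1)\oplus s(i-n)$ is Lemma \ref{(2,n) representations}, your generating function $1/(1+x+x^n)$ is the paper's algebra $\FF_2[t]/(t^n+t+1)$, your initial block computation matches Lemma \ref{first terms of (2,n)}, and you locate the critical even number $2n^2+2$ correctly. But there is a genuine gap, and you name it yourself: everything reduces to showing $c(N)\neq 1$ for all $N$ (with the single exception when $n-1$ is a power of $2$), and at that point the proposal stops. ``A carry/Lucas analysis of how the $1$-cells on the anti-diagonal pair off'' is not an argument; it is a restatement of the theorem in autocorrelation language. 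Your own observation that $S(x)^2=S(x^2)$ pins $c(N)$ only modulo $2$ shows why this reformulation is not progress: distinguishing $c(N)=1$ from $c(N)=0$ or $c(N)\geqslant 2$ is exactly as hard as the original problem, and controlling the autocorrelation of the Sierpinski pattern along \emph{every} anti-diagonal is, if anything, a harder statement than the theorem, since nothing in the proposal exploits the minimality of a putative counterexample.

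The paper never controls $c(N)$ globally; it avoids the issue with a rigidity-plus-invertibility argument that you should compare against. Take $x$ to be the \emph{least} even term beyond $2n$. Lemma \ref{(2,n) lemma 3} (every window of $n$ consecutive odd numbers below $x$ meets $\V$) combined with uniqueness of representation of $x$ forces the window of odd numbers just below $x$ to be exactly $(1,0,\dots,0)$ (Lemma \ref{Right before x}); this is where ``$c(N)=1$ at the first bad $N$'' is converted into a rigid local configuration rather than a counting problem. Since multiplication by $t$ is invertible in $\FF_2[t]/(t^n+t+1)$ (Lemma \ref{P_k vs. Q_k}), the backward orbit of this window must coincide with the forward orbit of the initial all-ones window, so the location of $x$ is read off from a single orbit of an invertible linear map. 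The Sierpinski structure (Lemma \ref{binary sequences, basic}, $R_k=(t+1)^k$ with $R_k(i)=\binom{k-1}{i-1}\bmod 2$) then splits the two cases: if $n-1=2^k$ the orbit reaches $(1,0,\dots,0)$ precisely at the time corresponding to $x=2n^2+2$, and uniqueness of its representation follows from the degree bound $R_k(i)=0$ for $i>k$ (Lemma \ref{third even term}); if $n-1$ is not a power of $2$, the computation with $S_l$ and $P_{2^k n}$ (Lemmas \ref{S_l}--\ref{P_{2^k n}}) produces two explicit terms summing to $x$, contradicting uniqueness. So the pairing phenomenon you hope to establish on every anti-diagonal only ever needs to be verified on the one anti-diagonal through the minimal counterexample, where the configuration is forced to be maximally degenerate. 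To complete your proposal you would either need to import this minimality-rigidity mechanism (at which point you have reproduced the paper's proof) or genuinely prove the global statement about $c(N)$, for which no method is offered.
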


\begin{conjecture}\label{(2,n) conjecture}
If $n-1$ is a power of 2, then $\V$ has precisely three even terms: $2,2n,2n^2+2$.
\end{conjecture}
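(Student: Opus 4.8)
The plan is to reduce everything to the behaviour of the odd terms, which I claim obey a single $\FF_2$-linear recurrence, and then to translate the birth of a new even term into a representation-counting problem whose outcome is dictated by the $2$-adic nature of $n-1$.

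First I would determine the odd terms under the provisional hypothesis that $2$ and $2n$ are the only even terms; this hypothesis is then justified a posteriori by strong induction on the candidate being tested, since an odd number can only be built from one even and one odd summand, and an even number only from two summands of equal parity. Granting the hypothesis, a large odd $m$ has exactly the potential representations $2+(m-2)$ and $2n+(m-2n)$, so $m\in\V$ iff exactly one of $m-2,\,m-2n$ lies in $\V$. Indexing the odds by $m=n+2k$ and setting $d(k)=\mathds 1[n+2k\in\V]$, this is the recurrence $d(k)=d(k-1)+d(k-n)$ over $\FF_2$ with the initial block $d(0)=\cdots=d(n-1)=1$ (the run of consecutive odds $n,n+2,\dots,3n-2$, which I would check by hand). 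Summing the series gives $\sum_k d(k)x^k=\tfrac{1}{1+x+x^n}$ in $\FF_2[[x]]$, and expanding $\tfrac{1}{1+x+x^n}=\sum_j x^j(1+x^{n-1})^j$ together with Lucas' theorem realises the support of $d$ as the linear image $(i,j)\mapsto (n-1)i+j$ of the Sierpinski triangle $\{(i,j):i\preceq j\}$, counted modulo $2$. This is the advertised connection.

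Next I would reduce the even terms to counting. For even $e>2n$ the only even+even sums available are $2n+2$ and $4n$, and I would eliminate both by exhibiting a second (odd+odd) representation; every other representation of $e=2(n+S)$ is a sum of two odd terms, so the number of them is the self-correlation $\sum_{a+b=S}d(a)d(b)$. Reducing mod $2$ gives $[x^S]\tfrac{1}{(1+x+x^n)^2}=[x^S]\tfrac{1}{1+x^2+x^{2n}}$, so the parity of the correlation is $d(S/2)$ when $S$ is even and $0$ otherwise. A fresh even term is born exactly when $2(n+S)$ has a single unordered representation as a sum of two odd terms. I would first show this cannot happen for $0<S<n^2-n+1$ in either case, and then examine $S=n^2-n+1$, i.e. $e=2n^2+2$: when $n-1=2^t$ one has $S=2^{2t}+2^t+1$, and the Lucas description forces $\{0,S\}$ to be the only pair with $a+b=S$ and $d(a)=d(b)=1$, producing the lone representation $2n^2+2=n+(2n^2-n+2)$ and hence conclusion (1). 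When $n-1$ is not a power of $2$, I would instead show the correlation never yields a single unordered representation for any $S\ge 1$, so that $2$ and $2n$ remain the only even terms, which is conclusion (2) (regularity then following from Finch's criterion).

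The main obstacle is the passage from parity to exact counts. The generating-function picture yields the self-correlation modulo $2$ for free, but uniqueness of a representation requires the count to be \emph{exactly} $2$ off the diagonal (or exactly the diagonal alone) — not merely odd, nor merely $\equiv 2 \pmod 4$ — and this is exactly where the arithmetic of $n-1$ intervenes: the self-similarity of the support of $d$ lives at scale $2$, so the slicing at scale $n-1$ aligns with it only when $n-1$ is a power of $2$. For such $n$ the misalignment vanishes precisely at the apex configuration that creates $2n^2+2$, while for all other $n$ the transversality should force the ``present'' indices summing to any fixed $S$ to occur in cancelling pairs, keeping every count at $0$ or $\ge 2$. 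Turning this heuristic into a proof valid for every $S$ simultaneously — rather than for the single value $S=n^2-n+1$ that we need for part (1) — is the delicate combinatorial core of the argument.
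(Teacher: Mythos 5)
Be aware first of the status of what you set out to prove: this statement is Conjecture \ref{(2,n) conjecture}, which the paper explicitly leaves \emph{open}. The paper's Theorem \ref{(2,n) theorem} establishes only the two ingredients your plan begins with: that $2n^2+2$ is a third even term when $n-1$ is a power of $2$, and that no third even term exists otherwise. Your first half --- the $\FF_2$-recurrence $d(k)=d(k-1)+d(k-n)$ with an initial block of $n$ ones, the generating function $1/(1+x+x^n)$, Lucas' theorem, and the disjointness-of-supports argument giving the unique representation $2n^2+2=n+(2n^2-n+2)$ --- is essentially the paper's own machinery (there phrased as $R_k=(t+1)^k$ in $\FF_2[t]/(t^n+t+1)$ with $R_k(i)=\binom{k-1}{i-1}\bmod 2$), so if completed it would reprove Theorem \ref{(2,n) theorem}, not the conjecture.

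The genuine gap is in the step that would make the count ``precisely three,'' and it is structural, not merely the technical difficulty you flag at the end. Everything in your plan rests on the two-even-summand rule: an odd $u$ lies in $\V$ iff exactly one of $u-2$, $u-2n$ does, which is an XOR and hence $\FF_2$-linear. That rule is valid only for $2n<u<2n^2+2$. To prove the conjecture you must exclude a \emph{fourth} even term, i.e. control the sequence above $2n^2+2$, where the rule becomes: $u\in\V$ iff exactly one of $u-2$, $u-2n$, $u-(2n^2+2)$ lies in $\V$. ``Exactly one of three'' is not linear over $\FF_2$: if all three predecessors are present, the XOR is $1$ but $u$ has three representations and is excluded (as a Boolean polynomial the rule is $a+b+c+abc$). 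Consequently the recurrence, the rational generating function, the Frobenius identity $(\sum_k d(k)x^k)^2=\sum_k d(k)x^{2k}$, and the Sierpinski self-similarity all fail precisely in the regime you need them; in addition, the even candidates $2n^2+4$, $2n^2+2n+2$ and $4n^2+4$ have even-plus-even representations involving $2n^2+2$ that your count omits. Your proposal never enters this regime --- the ``for every $S$ simultaneously'' analysis you defer would be carried out with a recurrence that is wrong there --- and, combined with the parity-versus-exact-count gap you yourself acknowledge, the proposal stops exactly where the paper stops.
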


Using Theorem \ref{Finch's criterion}, we obtain the following important statement about the regularity of $\V$:
\begin{corollary}\label{(2,n) corollary}
Let $\V:=\V(2,n)$, and $n\geqslant 5$ odd.
\begin{enumerate}
\item If $n-1$ is not a power of 2, then the sequence of differences $(u_{n+1} - u_n)$ eventually becomes periodic.
\item  If $n-1$ is a power of 2, and if Conjecture \ref{(2,n) conjecture} is true, then $\V$ is regular as well.
\end{enumerate}
\end{corollary}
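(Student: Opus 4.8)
The plan is to derive both parts directly from Finch's criterion (Theorem \ref{Finch's criterion}), using the classification of even terms supplied by Theorem \ref{(2,n) theorem}. The key observation is that Finch's criterion requires only the finiteness of the set of even terms in order to conclude regularity, and in each case Theorem \ref{(2,n) theorem} (conditionally, in the second case, on Conjecture \ref{(2,n) conjecture}) provides exactly this finiteness.

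For part (1), I would invoke Theorem \ref{(2,n) theorem}(2), which asserts that when $n \geqslant 5$ is odd and $n-1$ is not a power of $2$, the sequence $\V(2,n)$ contains exactly the two even terms $2$ and $2n$. This is \emph{a fortiori} the statement that $\V(2,n)$ has only finitely many even terms, so Theorem \ref{Finch's criterion} applies and yields that $\V(2,n)$ is regular. By the equivalence recorded in Definition \ref{Definition of Regular modified Ulam sequences}, regularity of an increasing integer sequence is precisely the assertion that its sequence of consecutive differences is eventually periodic, which is the desired conclusion. For part (2), assuming Conjecture \ref{(2,n) conjecture}, the sequence $\V(2,n)$ has precisely the three even terms $2,2n,2n^2+2$ when $n-1$ is a power of $2$, hence again only finitely many even terms; the identical appeal to Theorem \ref{Finch's criterion} produces regularity of $\V$.

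The main — and essentially only — point to be careful about is that there is no genuine obstacle here: all of the difficulty has already been absorbed into Theorem \ref{(2,n) theorem} (the classification of even terms) and into Finch's criterion (the implication from finitely many even terms to regularity). The corollary is a mechanical combination of the two, with the second part inheriting the same conditional status as Conjecture \ref{(2,n) conjecture}. The one thing worth confirming is that Finch's criterion genuinely applies to $\V$-sequences rather than merely to Ulam sequences, i.e.\ that the argument is insensitive to whether the summands are required to be distinct; but this is exactly what is asserted in the discussion preceding Theorem \ref{Finch's criterion}, where it is noted that Finch's proof never uses the distinctness condition and carries over verbatim.
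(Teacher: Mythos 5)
Your proposal is correct and matches the paper's own reasoning exactly: the paper derives this corollary by combining Theorem \ref{(2,n) theorem} (unconditionally for part (1), conditionally on Conjecture \ref{(2,n) conjecture} for part (2)) with Theorem \ref{Finch's criterion}, just as you do. Your additional remarks --- that regularity is by definition eventual periodicity of the differences, and that Finch's criterion applies verbatim to $\V$-sequences --- are accurate and consistent with the paper's discussion.
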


 We begin by mimicking Schmerl and Spiegel's proof that the Ulam sequence $\U(2,n)$ has two even terms 2 and $2n+2$ for odd $n\geqslant 5$ \cite{schmerl_spiegel_1994}. Then we show where Schmerl and Spiegel's proof breaks when applied to the $\V$-sequence $\V(2,n)$. 

\begin{proof}[Proof of Theorem \ref{(2,n) theorem}]
The strategy behind the proof is that we assume that $\V$ has a third even term $x$, and then we find the necessary and sufficient conditions for this to be the case. We first determine the initial terms of the sequence, including two even terms 2 and $2n$ (Lemma \ref{first terms of (2,n)}). Then we show how the knowledge of initial terms sheds light on the structure of $\V\cap[x-(2n-1)n-2,x]$ (Lemmas \ref{(2,n) representations}-\ref{Right before x}). We subsequently show that the structure of $\V\cap[x-(2n-1)n-2,x]$ is related to Sierpinski's Triangle, thus having fractal behavior (Lemmas \ref{binary sequences, basic}-\ref{P_k vs. Q_k}). Using this fractal structure, we conclude that if $n-1$ is a power of 2, then $\V$ has a third even term $x=2n^2+2$ (Lemma \ref{third even term}). Finally, we show that for other odd values of $n$, $x$ has a second representation implying that $\V$ has no third even term (Lemmas \ref{S_l}-\ref{P_{2^k n}}). 

\begin{lemma}\label{first terms of (2,n)}
$\V\cap[2,5n+2]=\{2,n,n+2,n+4,...,2n-3,2n-1,2n,2n+1,2n+3,...,3n-4,3n-2,3n+2,3n+6,...,5n-8, 5n-4,5n+2\}$
\end{lemma}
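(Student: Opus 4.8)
The plan is to prove the lemma by strong induction on the integers $m \in [2, 5n+2]$, processed in increasing order: assuming we know exactly which integers below $m$ belong to $\V$, I count the representations of $m$ as an unordered sum of two earlier terms and decide membership. The whole argument is organized around a single invariant carried through the induction: up to the point currently under consideration, the only even elements of $\V$ are $2$ and (once we pass it) $2n$. Nothing lies in $(2,n)$, since $n$ is the second generator and the greedy rule only ever appends integers exceeding the current maximum.

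The engine of the proof is a parity dichotomy. If $m$ is odd, any representation $m = a+b$ must use exactly one even summand; by the invariant that summand is $2$ or $2n$, so the only candidate representations are $2+(m-2)$ and $2n+(m-2n)$. Hence
\[
m \in \V \iff \text{exactly one of } m-2,\ m-2n \text{ lies in } \V,
\]
with the convention that a negative difference is not in $\V$. This criterion drives the entire odd skeleton. With $n$ itself a generator, for $n+2 \le m \le 2n-1$ only $m-2$ qualifies, so all odds in $[n,2n-1]$ are added; for $2n+1 \le m \le 3n-2$ again only $m-2$ qualifies (as $m-2n \in [1,n-2]$ is odd and below $n$), so all odds up to $3n-2$ are added; at $m=3n$ both $m-2=3n-2$ and $m-2n=n$ lie in $\V$, giving two representations, so $3n$ is skipped. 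On $[3n+2,5n-4]$ the criterion becomes self-referential and produces the step-$4$ block: $m-2n$ always lands in the complete lower block of odds, while $m-2$ lies in $\V$ precisely when $m \equiv 3n \pmod 4$, so exactly the class $m \equiv 3n+2 \pmod 4$ is added. Finally $5n-2$ has two representations, $5n$ has none, and $5n+2 = 2n + (3n+2)$ has exactly one, which terminates the range.

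For even $m$ the representations are instead (even, even) or (odd, odd). The even--even sums available from $\{2,2n\}$ are only $4,\ 2n+2,\ 4n$, so $m=2n$ has the single representation $n+n$ and is added --- this is the step that installs the second even term and refreshes the invariant --- while $m=2n+2$ already has the two representations $n+(n+2)$ and $2+2n$. Every remaining even $m \in (2n,5n+2)$ I will show has at least two odd--odd representations, hence is never added, which keeps the invariant intact and closes the induction. In the lower part $[2n+2,4n-2]$ this is immediate by shifting: $\{n,\,m-n\}$ and $\{n+2,\,m-n-2\}$ are two valid pairs drawn from the complete lower odd block.

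The hard part will be the even numbers in the upper part of the range, $[4n, 5n+1]$. There the naive shift pairs $\{3n-2,\,m-3n+2\}$ and $\{3n-4,\,m-3n+4\}$ coincide at $m=6n-6$, and for small $n$ (notably $n=5,7$) this coincidence point lies inside the range, so the lower block alone yields only one representation. The second representation must then be extracted from the partial step-$4$ block $T=\{3n+2,\dots,5n-4\}$, for instance $m=(m-3n-2)+(3n+2)$ with $m-3n-2$ back in the lower block, and one must verify that the particular step-$4$ term invoked has already been shown to lie in $\V$. Checking this interlocking of the complete lower block and the partial step-$4$ block near $m\approx 5n$, with its $n$-dependent boundary coincidences, is the delicate core of the argument; the odd skeleton above, by contrast, follows almost mechanically from the parity criterion once the invariant is in place.
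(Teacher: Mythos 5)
Your proposal is correct, and in fact it supplies something the paper itself omits: Lemma \ref{first terms of (2,n)} is stated there with no proof at all, the verification evidently being regarded as routine. Your central tool --- for odd $m$ the only possible representations are $2+(m-2)$ and $2n+(m-2n)$, so membership is decided by whether exactly one of $m-2$, $m-2n$ lies in $\V$ --- is precisely the paper's Lemma \ref{(2,n) representations}, which it proves immediately afterwards and uses throughout the main argument; so your induction is entirely in the spirit of the paper, just carried out explicitly on the initial segment. One small imprecision in your description of the ``delicate core'': for $n=5$ the troublesome even values in $[4n,5n+1]$ are not only the coincidence point $m=6n-6=24$ but also $m=6n-4=26$, where the two shifted pairs $\{3n-2,\,m-3n+2\}$ and $\{3n-4,\,m-3n+4\}$ fail for a different reason --- the first degenerates to $\{3n-2,3n-2\}$ and the second invokes $m-3n+4=3n$, which is \emph{not} in $\V$. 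Fortunately the repair you already prescribe, the representation $m=(m-3n-2)+(3n+2)$ with $m-3n-2$ in the complete lower block and $3n+2$ the first step-$4$ term (decided before any $m\geqslant 4n$ in your increasing-order induction), is valid in both cases, so the argument closes; you should just state the case split as ``$m\geqslant 6n-6$'' rather than ``$m=6n-6$''. With that adjustment, and noting that for $n\geqslant 9$ no such boundary issue arises at all since $6n-6>5n+1$, your outline compiles into a complete proof.
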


Suppose $\V$ has more even terms than just $2$ and $2n$. Let $x$ be the least even positive integer in $\V$ greater than $2n$. We will show that either $x$ does not exist, or $x=2n^2+2$ precisely when $n-1$ is a power of 2.

\begin{lemma}\label{(2,n) representations}
If $2n<u<x$ and $u$ is odd, then $u\in\V\iff$ precisely one of $u-2$, $u-2n$ is in $\V$. 

Equivalently, $1(u)=1(u-2)+1(u-2n)$ where $1=1_\V$ is the indicator function of $\V$ for $2n<u<x$ and $u$ odd. 
\end{lemma}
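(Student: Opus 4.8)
The plan is to run a direct parity analysis, exploiting the definition of $x$ as the least even term of $\V$ exceeding $2n$. Fix an odd $u$ with $2n<u<x$. Since $u>n$, it is not one of the two generators, so by the defining rule of a $\V$-sequence, $u\in\V$ if and only if $u$ admits \emph{exactly one} representation $u=a+b$ with $a,b\in\V$ and $a,b<u$. First I would record the crucial structural fact that the only even elements of $\V$ lying below $u$ are $2$ and $2n$: Lemma \ref{first terms of (2,n)} shows these are the sole even terms in $[2,2n]$, while the minimality of $x$ forbids any even term in the interval $(2n,x)\supseteq(2n,u)$. Both $2$ and $2n$ are genuinely available as summands, since $2<2n<u$ and both lie in $\V$.

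Next I would classify the representations by parity. As $u$ is odd, in any representation $u=a+b$ one summand is even and the other odd, because a sum of two even or of two odd terms is even. Hence the even summand must be either $2$ or $2n$, leaving only the two candidate representations $u=2+(u-2)$ and $u=2n+(u-2n)$. The first is valid precisely when $u-2\in\V$ and the second precisely when $u-2n\in\V$ (note $u-2$ and $u-2n$ are both odd and strictly smaller than $u$, so they qualify as earlier terms). These two representations are distinct: they would coincide only if $\{2,u-2\}=\{2n,u-2n\}$ as unordered pairs, forcing $u=2n+2$, which is even and so excluded. Consequently the number of representations of $u$ equals $1_\V(u-2)+1_\V(u-2n)\in\{0,1,2\}$.

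From this count the lemma follows at once: $u\in\V$ iff the count equals $1$, i.e.\ iff precisely one of $u-2$, $u-2n$ lies in $\V$. I would be careful to flag that the displayed identity $1(u)=1(u-2)+1(u-2n)$ is to be read modulo $2$ (as an exclusive-or), since when \emph{both} $u-2$ and $u-2n$ belong to $\V$ the right-hand side equals $2$ while $u\notin\V$; the three possible counts $0,1,2$ reduce modulo $2$ to $0,1,0$, which is exactly $1_\V(u)$ in each case, so the recurrence holds in $\FF_2$. This $\FF_2$-linear recurrence is precisely the object to be exploited later: iterating it expresses $1_\V$ through binomial-coefficient patterns modulo $2$ and thereby produces the Sierpinski-triangle behaviour announced in the proof outline.

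The argument is short, and the single point requiring genuine care is the restriction of the even summand to $\{2,2n\}$, which is the only place where the definition of $x$ and the initial-segment computation of Lemma \ref{first terms of (2,n)} enter; everything else is bookkeeping about parity and about the distinctness of the two candidate representations. I therefore do not expect a real obstacle here: the force of the lemma lies not in its difficulty but in how cleanly it isolates the $\FF_2$-recurrence that drives the remainder of the proof.
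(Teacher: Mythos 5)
Your proposal is correct and follows essentially the same route as the paper, whose entire proof is the observation that $2$ and $2n$ are the only even summands available below $x$, so that $u\in\V$ iff exactly one of $u-2$, $u-2n$ lies in $\V$. Your additional bookkeeping (parity forcing exactly one even summand, distinctness of the two candidate representations, and reading $1(u)=1(u-2)+1(u-2n)$ as an identity in $\FF_2$) merely makes explicit what the paper leaves implicit.
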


This lemma is the main technical observation used in the proof. It is used so abundantly throughout the proof that we do not quote it. 
\begin{proof}
Since there are only two even summands that $u$ could have, $2$ and $2n$, $u$ is in $\V$ iff precisely one of $u-2$ and $u-2n$ is in $\V$.
\end{proof}
\begin{lemma}\label{(2,n) lemma 3}
If $r$ is an odd number s.t. $1\leqslant r\leqslant x-2n+2$, then there exists $0\leqslant i\leqslant n-1$ s.t. $r+2i\in \V$.
\end{lemma}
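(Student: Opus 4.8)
The plan is to argue by contradiction: assume that \emph{none} of the $n$ consecutive odd numbers $r, r+2, \dots, r+2(n-1)$ lies in $\V$, and then slide this ``all-absent window'' downward, using the recurrence of Lemma \ref{(2,n) representations}, until it reaches the generator $n$, which of course does lie in $\V$. The only content is to verify that the slide never leaves the range $2n < u < x$ in which that recurrence is valid.

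First I would pin down the range bookkeeping. Since $x$ is even and $2n$ is even while $r$ is odd, the hypothesis $r \leqslant x-2n+2$ sharpens to $r \leqslant x-2n+1$, so the top of the window obeys $r+2(n-1) = r+2n-2 \leqslant x-1 < x$, while it stays above $2n$ (indeed $\geqslant 2n+1$) as long as the window's bottom is at least $3$. Thus every window top we will meet lies strictly between $2n$ and $x$, exactly the regime where Lemma \ref{(2,n) representations} applies.

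The central step is the downward slide. Suppose $1_\V$ vanishes on a length-$n$ window $\{r', r'+2, \dots, r'+2n-2\}$ with $r' \geqslant 3$. Applying Lemma \ref{(2,n) representations} at $u = r'+2n-2$, note that $u \notin \V$ (it is the top of the window) and $u-2 = r'+2n-4 \notin \V$ (also in the window, since $n \geqslant 2$); were the remaining summand $u-2n = r'-2$ in $\V$, then exactly one of $u-2, u-2n$ would lie in $\V$, which by the lemma would force $u \in \V$, a contradiction. Hence $r'-2 \notin \V$, and the all-absent window extends down to $\{r'-2, \dots, r'+2n-4\}$. Iterating this slide from $r'=r$ down to $r'=3$ (each step legitimate by the range control above, and the case $r=1$ needing no slide) shows that $1_\V$ vanishes on all of $\{1,3,\dots,2n-1\}$. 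But $n$ is odd with $1 \leqslant n \leqslant 2n-1$, so $n$ belongs to this window, while $n \in \V$ as a generator. This contradiction proves the lemma.

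The one place demanding care is precisely this range control: one must check that at \emph{every} stage of the backward induction the quantity $u = r'+2n-2$ still satisfies $2n < u < x$, so that Lemma \ref{(2,n) representations} is genuinely available. The parity refinement $r \leqslant x-2n+1$ and the stopping condition $r' \geqslant 3$ are exactly what guarantee this, and everything else reduces to the two-line deduction from the defining recurrence above; no appeal to the finer Sierpiński-type structure is required.
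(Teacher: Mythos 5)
Your proof is correct and is essentially the paper's own argument: the paper assumes a minimal all-absent window $\{r, r+2, \dots, r+2n-2\}$, uses minimality to get $r-2 \in \V$, and applies Lemma \ref{(2,n) representations} at the window's top to force $r+2n-2 \in \V$, which is exactly your downward slide recast as a minimal-counterexample argument (your slide step is the contrapositive of the same application of the lemma), terminating at the window $\{1,3,\dots,2n-1\}$ containing the generator $n$. The range bookkeeping you make explicit (parity giving $r \leqslant x-2n+1$ and the stopping condition $r' \geqslant 3$) is left implicit in the paper's version.
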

\begin{proof}
Suppose not, and let $r$ be the smallest odd positive integer st. $r+2i\notin\V$ for all $0\leqslant i\leqslant n-1$. Clearly $r\geqslant 3$. By assumption, $r-2+2i\in \V$ for some $0\leqslant i\leqslant n-1$. Since $r, r+2,...,r+2n-2\notin\V$, we must have $r-2$ in $\V$. Then precisely one of $r+2n-2, r+2n-4$ is in $\V$ which contradicts the assumption.
\end{proof}

Using Lemmas \ref{first terms of (2,n)}-\ref{(2,n) lemma 3}, we obtain valuable information about the structure of $\V\cap[x-(2n-1)n,x-n]$. This knowledge will be crucial for the rest of the proof of Theorem \ref{(2,n) theorem}.

\begin{lemma}\label{Right before x}
Suppose $\V$ has the third even term $x$. Then $x-n, x-3n$ are in $\V$ but none of $x-n-2, x-n-4, ..., x-3n+2$ is. 
\end{lemma}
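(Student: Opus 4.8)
The plan is to assume the third even term $x$ exists and to pin down the unique representation of $x$, from which the claimed window structure follows. First I would record three consequences of Lemma \ref{first terms of (2,n)}: the only even terms of $\V$ below $x$ are $2$ and $2n$, so in fact $x>5n+2$ (otherwise $x$ would be a third even term inside the listed initial segment); the smallest odd term of $\V$ is $n$; and every odd number in $[n,3n-2]$ lies in $\V$ (the listed odds $n,n+2,\dots,2n-1$ and $2n+1,\dots,3n-2$ cover all of them, the only skipped value $2n$ being even). Since $x$ is even and $x>4n$, it cannot be a sum of two even terms, because the only such sums are $4$, $2n+2$, $4n$, all at most $4n$. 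As the two summands of an even number share a parity, the unique representation guaranteed by $x\in\V$ must be $x=p+q$ with $p\le q$ both odd terms, and since the least odd term is $n$ we get $p\ge n$.

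Next I would run a counting argument on the window $W=\{x-n,x-n-2,\dots,x-3n+2\}$. For each odd $s\in[n,3n-2]$ we have $s\in\V$, and as $s$ ranges over these values $x-s$ ranges exactly over $W$; hence $x-s\in\V$ would furnish a representation $x=s+(x-s)$, and distinct $s$ produce distinct representations. Uniqueness of the representation of $x$ therefore forces \emph{at most} one element of $W$ to lie in $\V$, while Lemma \ref{(2,n) lemma 3}, applied with $r=x-3n+2$ (which is odd and satisfies $1\le r\le x-2n+2$), forces \emph{at least} one. Thus exactly one element of $W$ lies in $\V$, say $x-n-2j_0$, and the unique representation of $x$ is $x=(n+2j_0)+(x-n-2j_0)$.

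The crux, and the step I expect to be the main obstacle, is to show $j_0=0$, equivalently that the smaller summand of $x$ equals $n$, so that $x-n\in\V$. Assuming $j_0\ge 1$, I would exploit the neighboring even number $x-2$: since $p-2\ge n$ is again an odd term, $x-2=(p-2)+q$ is a representation of $x-2$, and because $x-2\notin\V$ it must carry a second representation, which I would try to convert, by incrementing one summand by $2$, into an illegal second representation of $x$. The difficulty is that such a shift only helps when the incremented summand is again in $\V$, which is not automatic; pushing the XOR recurrence of Lemma \ref{(2,n) representations} instead shows that under $j_0\ge 1$ the term $q$ would be isolated ($q\pm2\notin\V$) and that the two consecutive odd terms $q-2n,\,q-2n+2$ would both lie in $\V$. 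It is from this rigid local pattern, the first trace of the Sierpinski structure exploited later, that I would extract the contradiction. This is the delicate part and is where the representation of $x$ is finally nailed to $x=n+(x-n)$.

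Granting $j_0=0$, the remaining assertions are immediate. The representation $x=n+(x-n)$ gives $x-n\in\V$, while the ``exactly one'' conclusion of the window argument gives that none of the interior terms $x-n-2,\dots,x-3n+2$ lies in $\V$. Finally I would apply Lemma \ref{(2,n) representations} at the odd number $x-n$ (which lies strictly between $2n$ and $x$): its membership is governed by $x-n-2$ and $x-3n$, and since $x-n\in\V$ while $x-n-2\notin\V$, exactly one predecessor is in $\V$, forcing $x-3n\in\V$. This yields all three conclusions of the lemma.
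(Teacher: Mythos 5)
Your setup is the paper's own: steps one through three (the consequences of Lemma \ref{first terms of (2,n)}, the parity argument, and the counting argument showing that exactly one element of $W$ lies in $\V$) reproduce the paper's use of Lemma \ref{(2,n) lemma 3} plus uniqueness of the representation of $x$, and your final step (applying Lemma \ref{(2,n) representations} at $u=x-n$ to force $x-3n\in\V$) is exactly the paper's Case 2. The problem is the crux you flag yourself and never complete: you do not actually derive a contradiction from $j_0\geqslant 1$. Moreover, no purely local argument can: the configuration you reach --- $q\in\V$ isolated, with $q-2n$ and $q-2n+2$ both in $\V$ --- is not self-contradictory, and in fact it genuinely occurs in the sequence (once the lemma is proved, $q=x-3n$ exhibits precisely this pattern, since $x-5n$ and $x-5n+2$ both lie in $\V$; this is the statement $R_2=1+t$). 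So ``extracting a contradiction from the rigid local pattern'' cannot work as stated; the contradiction must come from a second representation of $x$ itself.

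What is missing is the pairing of your two consecutive terms with the explicitly known initial segment, which is how the paper closes its Case 1. The partners are $x-(q-2n)=3n+2j_0$ and $x-(q-2n+2)=3n+2j_0-2$, two consecutive odd numbers in $[3n,5n-2]$. By Lemma \ref{first terms of (2,n)}, $\V\cap[3n,5n-4]$ is the arithmetic progression $3n+2,3n+6,\dots,5n-4$ of common difference $4$, so exactly one of the two partners is $\equiv 3n+2 \pmod 4$: if $j_0$ is odd it is $3n+2j_0$ (and then $j_0\leqslant n-2$ automatically, since $n-1$ is even, so it lies in range), while if $j_0\geqslant 2$ is even it is $3n+2j_0-2$ (in range since $j_0\leqslant n-1$). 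Either way that partner is in $\V$, and pairing it with $q-2n$ (resp.\ $q-2n+2$) gives a second representation of $x$, distinct from $x=(n+2j_0)+(x-n-2j_0)$ because its smaller summand is at least $3n$. This contradicts $x\in\V$ and is what forces $j_0=0$. (For comparison: the paper's Case 1 is exactly this pairing, although its printed index bookkeeping has a slip --- the second-window elements in $\V$ sit at indices $j=i$ and $j=i+1$, not $j=i$ and $j=i-1$ --- which the argument above corrects.) Without this step your proposal establishes everything except the one assertion the lemma is really about, namely that the surviving window element is $x-n$.
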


\begin{proof}
Following the previous lemma, pick $r = x-3n+2$. By Lemma \ref{(2,n) lemma 3}, there exists $0\leqslant i\leqslant n-1$ s.t. $r+2i=x-3n+2+2i\in\V$. Then $x$ has the following Ulam representation: $x=(x-3n+2+2i)+(3n-2-2i)$. We will show under which conditions it must have a second representation. By Lemma \ref{(2,n) lemma 3}, at least one of $x-n,x-n-2,...,x-3n+2$ is in $\V$ - but if two of them are in $\V$, then $x$ will have two representations. Using the fact that the only even summands are $2$ and $2n$, we can also determine whether or not each of $x-3n,x-3n-2,...,x-5n+2$ is in $\V$ because $x-5n+2+2j$ is in $\V$ iff precisely one of $x-3n+2+2j, x-3n+2j$ is in $\V$. We have two cases:
\begin{itemize}
\item if $0\leqslant i < n-1$, $0\leqslant j\leqslant n-1$, then $x-5n+2+2j\in\V$ iff $j=i$ or $j=i-1$.
\item if $i=n-1$, $0\leqslant j\leqslant n-1$, then $x-5n+2+2j\in\V$ iff $j=0$ or $j=n-1$. 
\end{itemize}
\begin{case}
We have $x-3n+2+2i\in\V$ and either $x-5n+2+2i\in\V$ or $x-5n+2i\in\V$. One of $5n-2-2i, 5n-2i$ is in $\V$, hence $x$ has a second representation. Thus this case leads to a contradiction.
\end{case}
\begin{case}
We have $x-n\in \V$ and precisely one of $x-n-2,x-3n$ is in $\V$. Note that none of $x-n-2, x-n-4, ..., x-3n+2$ can be in $\V$, otherwise we would get a second representation. Therefore $x-3n\in\V$.
\end{case}
\end{proof}
The aforementioned analysis reveals two important facts. First, whether odd $u\in(2n,x)$ is in $\V$ depends only on the terms $u-2, u-4,..., u-2n$, and more precisely, $1(u)=1(u-2)+1(u-2n)$. Second, if we take $u=x-n$, we know by Lemma \ref{Right before x} that only $x-3n=(x-n)-2n$ is in $\V$ while $x-n-2,...,x-3n-2$ are not. We can encode this information in two isomorphic ways:
\begin{itemize}
\item as a binary sequence: $(1(x-3n),1(x-3n+2),...,1(x-n-2))=(1,0,...,0)$
\item as a polynomial $1+0\cdot t+...+0\cdot t^{n-1}=1$
\end{itemize}
We usually choose the polynomial encoding as it makes proofs neater, and it displays the connections between $\V$ and Pascal and Sierpinski's triangles. However, when convenient for the sake of the presentation, we will switch from one encoding to another by $(a_1,...,a_n)\leftrightarrow a_1+a_2 t+...+a_n t^{n-1}$.

Let $Q_1=1+0\cdot t+...+0\cdot t^{n-1}$, and let $Q_{j+1}=t\cdot Q_j$. Let $Q_j(i)$ denote the coefficient of $t_{i-1}$. Then $Q_{j}(i)=(1(x-3n+2i-2j))$. The relation $1(u)=1(u-2)+1(u-2n)$ imposes the condition $t^n=t+1$, meaning that we should consider $Q_j$ as polynomials in $\FF_2^n[t]/(t^n+t+1)$ as opposed to $\FF_2^n[t]$.
\begin{comment}
It turns out that the behavior of $\V$ is related to the behavior of a certain class of binary sequences exhibiting fractal behavior. We develop a theory of these sequences here to prove that whenever $n-1$ is a power of 2, $\V$ has a third even term $x=2n^2+2$. Define $Q_1=(1,0,...,0)\in\mathbb{F}_2^n$, and similarly define $Q_{k}\in\mathbb{F}_2^n$ by the following recursion:
\[
  Q_{k+1}(i)=\begin{cases}
               Q_k(i-1), &2\leqslant i\leqslant n\\
               Q_k(n)+Q_k(n-1), &i=1 \\
            \end{cases}
\]
If $\V$ has the third even term $x$, then $Q_1=(1(x-3n),1(x-3n+2),...,1(x-n-2))$ and $Q_{j}=(1(x-3n+2-2j),1(x-3n+4-2j),..., 1(x-n-2j))$.
\end{comment}

Now define $R_k:=Q_{1+n(k-1)}$. Thus $R_k(i)=1(x-(1+2k)n+2(i-1))$. $R_k$ stores information about which odd numbers immediately preceding $x-(2k-1)n$ are in the sequence.
\begin{comment}
$R_k$ satisfies the following recursion:
\[ R_{k+1}(i)=\begin{cases}
               R_k(i)+R_k(i-1), &2\leqslant i\leqslant n\\
               R_k(1)+R_k(n)+R_k(n-1), &i=1 \\
            \end{cases}
\] 
where we add them over characteristic 2. If $\V$ has the third even term $x$, then
$$R_k=(1(x-(1+2k)n),1(x-(1+2k)n+2),...,1(x-(2k-1)n-2))$$

where $1(y):=1_\V(y)$ is the indicator function of $\V$. 
\end{comment}
By the properties of $Q_k$, we have $R_1=1$ and $R_{k+1}=t^n R_k=(t+1)R_k$ where $R_k$ are polynomials over $\FF_2^n[t]/(t^n+t+1)$. In our discussions, we will however always take the representatives of $Q_k$ and $R_k$ with degree less than $n$, as these polynomials store the data that we care about.

We first give the basic properties of $R_k$:
\begin{lemma}\label{binary sequences, basic}
For $1\leqslant k\leqslant n-1$:
\begin{enumerate}
\item $R_k=(t+1)^k$
\item For $0\leqslant i\leqslant k$, $R_k(i)={{k-1}\choose{i-1}}\mod 2$
\item $R_k(1)=R_k(k)=1$
\item For $2\leqslant k\leqslant n-1$, $R_k(2)=R_k(k-1)=1$ precisely when $k$ is even.
\item $R_k(i)=0$ for $i>k$
\end{enumerate}
\end{lemma}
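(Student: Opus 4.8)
The plan is to prove all five claims together by exploiting the single clean identity $R_k = (t+1)^k$ in $\FF_2[t]/(t^n+t+1)$, valid in the stated range $1 \leqslant k \leqslant n-1$. The crucial observation is that for $k \leqslant n-1$ the polynomial $(t+1)^k$ has degree at most $n-1$, so \emph{no reduction modulo $t^n+t+1$ ever occurs}; the representative of $R_k$ of degree less than $n$ is literally the ordinary integer-polynomial expansion of $(t+1)^k$ reduced coefficientwise mod 2. This reduces everything to elementary facts about binomial coefficients mod 2, and explains why the range is capped at $n-1$: that is exactly the threshold below which the quotient relation is invisible.

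First I would establish (1) by induction. We are given $R_1 = 1 = (t+1)^0$... but note the indexing: the recursion $R_{k+1} = (t+1)R_k$ together with $R_1 = 1$ gives $R_k = (t+1)^{k-1}$. I would state this carefully and match it to the paper's convention, then observe that for $k \leqslant n-1$ we have $\deg (t+1)^{k-1} = k-1 \leqslant n-2 < n$, so the product $(t+1)R_k$ never triggers the substitution $t^n = t+1$; the recursion in $\FF_2[t]/(t^n+t+1)$ coincides with ordinary multiplication in $\FF_2[t]$ throughout this range. Claim (2) is then immediate from the binomial theorem: writing $R_k = (t+1)^{k-1} = \sum_{j} \binom{k-1}{j} t^j \bmod 2$ and reading off the coefficient of $t^{i-1}$ (since $R_k(i)$ denotes the coefficient of $t^{i-1}$) gives $R_k(i) = \binom{k-1}{i-1} \bmod 2$. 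Claim (5) follows at once, since $\binom{k-1}{i-1} = 0$ whenever $i-1 > k-1$, i.e.\ $i > k$.

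For claim (3) I would evaluate the extreme coefficients: $R_k(1) = \binom{k-1}{0} = 1$ and $R_k(k) = \binom{k-1}{k-1} = 1$, both trivially odd. Claim (4) is the one genuinely number-theoretic point, relating the sub-extreme coefficients to the parity of $k$. Here $R_k(2) = \binom{k-1}{1} = k-1 \bmod 2$ and $R_k(k-1) = \binom{k-1}{k-2} = k-1 \bmod 2$, so both equal $1$ exactly when $k-1$ is odd, i.e.\ when $k$ is even. I would present this as a one-line parity computation; the symmetry $\binom{k-1}{1} = \binom{k-1}{k-2}$ makes the two statements in (4) coincide automatically.

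The main obstacle is not analytic difficulty but bookkeeping: one must be scrupulous about two indexing conventions that could silently introduce off-by-one errors. The first is the coefficient convention ($R_k(i)$ is the coefficient of $t^{i-1}$, not $t^i$), and the second is whether $R_k = (t+1)^k$ or $(t+1)^{k-1}$ given the base case $R_1 = 1$. These must be pinned down first, because every subsequent binomial identity depends on getting the shift right; once the conventions are fixed, the proof is a short deduction from the binomial theorem mod 2 and requires no use of Lucas' theorem or the structure of the quotient ring beyond the degree bound. I would therefore open by fixing notation explicitly, then dispatch (1), (2), (5), (3), (4) in that logical order.
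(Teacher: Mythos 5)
Your proposal is correct and follows essentially the same route as the paper's own proof: induction on the recursion $R_{k+1}=(t+1)R_k$ with $R_1=1$ to get the closed form, then the binomial theorem mod $2$ to read off all the coefficient statements. You additionally (and rightly) catch that the closed form must be $R_k=(t+1)^{k-1}$ rather than the stated $(t+1)^k$ --- the paper's part (1) carries an off-by-one slip, since parts (2)--(5) and the recursion are only consistent with exponent $k-1$ --- and you make explicit the degree bound $k-1\leqslant n-2$ ensuring no reduction modulo $t^n+t+1$ ever occurs, a point the paper leaves implicit.
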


\begin{proof}
(1) follows immediately by induction on $k$ and the fact that $R_1=1$, $R_k=(t+1)R_1$. (2) results from the binomial theorem applied to $(t+1)^k$. (3) corresponds to the fact that ${{k-1}\choose{k-1}}={{k-1}\choose{0}}=1$ while (4) amounts to observing that for even $k$, ${{k-1}\choose{1}}=k-1$ is odd. Finally, (5) follows from the fact that $R_k$ has degree $k-1$.
\end{proof}
\begin{comment}
\begin{lemma}\label{binary sequences, basic}
If $1\leqslant k\leqslant n-1$, then $R_{k}(1)=R_k(k)=1$ and $R_k(i)=0$ for $i>k$. Moreover, for $2\leqslant k\leqslant n-1$, $R_k(k-1)=1$ precisely when $k$ is even.
\end{lemma}
    
\begin{proof}
For $k=1$, $R_k(k)=R_k(1)=Q_1(1)=1$, and $R_k(i)=Q_1(i)=0$ for $i>1$. Moreover, for $k=2$, $R_k(k-1)=R_2(1)=R_1(1)+R_1(n)+R_1(n-1)=1+0+0=1$.

Suppose the statement is true for some $1\leqslant k<n-1$. Then for $k+1$::
\begin{align*}
R_{k+1}(1) &=R_k(1)+R_k(n)+R_k(n-1)=1+0+0=1\\
R_{k+1}(k+1) &=R_k(k+1)+R_k(k)=0+1=1\\
R_{k+1}(i) &=R_{k}(i)+R_k(i-1)=0+0\; \rm{for}\; i>k+1 
\end{align*}
Moreover, $R_{k+1}((k+1)-1)=R_{k+1}(k)=R_k(k)+R_k(k-1)=1+R_k(k-1)$ is 1 iff $R_k(k-1)$ is 0. 
\end{proof}
\end{comment}

Translating the statement of Lemma \ref{binary sequences, basic} to statements about $\V$, we have the following:
\begin{lemma}
\ 

\begin{enumerate}
\item each of $x-n, x-3n,...,x-(2n-1)n$ is in $\V$
\item for all $1\leqslant k \leqslant n$, $x-(2n-2k+1)n-2k$ is in $\V$
\item $x-(2n-2k+1)n-2i\notin\V$ for $0<i<k$
\item $x-5n, x-9n+4, x-13n+8,...,x-(2n-1)n-6$ are all in $\V$ while $x-7n+2,x-11n+6,...,x-(2n-3)n-8$ are not
\end{enumerate}
\end{lemma}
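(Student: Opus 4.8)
The plan is to read off all four claims from Lemma~\ref{binary sequences, basic} via the dictionary $R_k(i)=1_\V\bigl(x-(1+2k)n+2(i-1)\bigr)$ established above, under which each membership question for $\V$ becomes a question about a single coefficient of $R_k=(t+1)^{k-1}$, i.e.\ about a single entry of Pascal's triangle modulo $2$. No new combinatorics is involved: the content is entirely in matching indices. For claim (1), the left-edge identity $R_k(1)=1$ (part (3) of the lemma) says $x-(1+2k)n\in\V$ for $1\leqslant k\leqslant n-1$, which is exactly the list $x-3n,x-5n,\dots,x-(2n-1)n$; the one extra term $x-n$ is already provided by Lemma~\ref{Right before x}.

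Claims (2) and (3) are the right edge and the vanishing strictly above the diagonal. A direct substitution shows $x-(2n-2k+1)n-2k=R_{n+1-k}(n+1-k)$ and, more generally, $x-(2n-2k+1)n-2i=R_{n+1-k}(n+1-i)$. For claim (3) the hypothesis $0<i<k$ gives $n+1-i>n+1-k$, so part (5) of the lemma yields $R_{n+1-k}(n+1-i)=0$; since $2\leqslant k$ here, $n+1-k\leqslant n-1$ and the lemma applies. For claim (2) the same substitution reduces matters to $R_{n+1-k}(n+1-k)=1$, which is part (3) of the lemma for $2\leqslant k\leqslant n$. The only index falling outside the stated range $k\leqslant n-1$ is $k=1$, i.e.\ the term $x-(2n-1)n-2=R_n(n)$; here one notes that $R_n=(t+1)^{n-1}$ still has degree $n-1<n$, so no reduction modulo $t^n+t+1$ has yet occurred and the proof of the lemma applies verbatim, giving $R_n(n)=\binom{n-1}{n-1}=1$.

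Claim (4) is the sub-diagonal: part (4) of the lemma states $R_k(k-1)=1$ exactly when $k$ is even (for $2\leqslant k\leqslant n-1$), and the dictionary turns this into $1_\V\bigl(x-(1+2k)n+2(k-2)\bigr)$. Running $k$ through the even values $2,4,\dots,n-1$ produces the membership list beginning $x-5n,\,x-9n+4,\,x-13n+8,\dots$, and through the odd values $3,5,\dots,n-2$ the non-membership list beginning $x-7n+2,\,x-11n+6,\dots$; here oddness of $n$ is what guarantees $n-1$ even and $n-2$ odd, so that the two lists terminate with the sub-diagonal entries of $R_{n-1}$ and $R_{n-2}$. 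I expect the main obstacle to be purely bookkeeping: one must carefully distinguish the coefficient index $i$ of $R_k$ from the parameter $k$ labelling the progressions, justify the reindexing $j=n+1-k$, and be especially careful in computing the final term of each arithmetic progression in claims (2) and (4), where the two-variable substitution is easiest to get wrong.
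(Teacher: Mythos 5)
Your proof is correct and is exactly the paper's argument: the paper's own proof consists of the single observation that all four claims follow from Lemma \ref{binary sequences, basic} via the dictionary $R_k(i)=1(x-(1+2k)n+2(i-1))$, which is precisely your translation scheme, down to reading claims (1)--(4) off the left edge, the diagonal, the region above the diagonal, and the sub-diagonal of Pascal's triangle mod 2. Your write-up is in fact more careful than the paper's on two points: you justify $R_n(n)=1$ (needed for the case $k=1$ of claim (2), which falls outside the stated range $k\leqslant n-1$ of Lemma \ref{binary sequences, basic} and is silently glossed over in the paper), and your bookkeeping for claim (4) yields the correct terminal terms $x-(2n-3)n-6$ and $x-(2n-5)n-8$ (the sub-diagonal entries of $R_{n-1}$ and $R_{n-2}$), which shows that the endpoints $x-(2n-1)n-6$ and $x-(2n-3)n-8$ as printed in the lemma are typos, each off by $2n$: for instance, with $n=5$, $x=52$, the printed endpoint of the non-membership list would be $9\in\V$, whereas the correct endpoint is $19\notin\V$.
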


\begin{proof}
All the statements follow from Lemma \ref{binary sequences, basic} and the fact that $$R_k(i)=1(x-(1+2k)n+2(i-1))$$
$R_k(1)=1$ corresponds to the fact that each of $x-n, x-3n,...,x-(2n-1)n$ is in $\V$. $R_k(k)=1$ means that for all $1\leqslant k \leqslant n$, $x-(2n-2k+1)n-2k$ is in $\V$. The assertion that $R_k(i)=0$ for $i>k$ implies that $x-(2n-2k+1)n-2i\notin\V$ for $0<i<k$. Finally, the fact that $R_k(k-1)=1$ precisely when $k$ is even tells us that $x-5n, x-9n+4, x-13n+8,...,x-(2n-1)n-6$ are all in $\V$ while $x-7n+2,x-11n+6,...,x-(2n-3)n-8$ are not. 
\end{proof}

The preliminary observations in Lemma \ref{binary sequences, basic} allow us to prove that the set $(R_k)_{k=1}^{n-1}$ is essentially the Sierpinski triangle, due to the fact that $R_k(i)={{k-1}\choose{i-1}}\mod 2$. This astonishing observation speaks volumes for the rigidity of the structure of $\V$. The set $(R_k)_{k=1}^{n-1}$ has thus a fractal behavior whose details we provide in the following lemma:

\begin{lemma}\label{fractal behavior of R}
Let $k\geqslant 0$ s.t. $2^{k+1}| n-1$. Then $R_{2^k+m}(2^k+i)=R_{2^k+m}(i)=R_{m}(i)$ for all $1\leqslant i,m\leqslant 2^k$. Moreover, $R_{2^k}(j)=1$ for $1\leqslant j\leqslant 2^k$.
\end{lemma}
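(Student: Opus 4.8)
The plan is to deduce everything from the factorization of $R_k$ as a power of $t+1$ over $\FF_2$. The recursion $R_1=1$, $R_{k+1}=(t+1)R_k$ gives $R_k=(t+1)^{k-1}$, so that $R_k(i)=\binom{k-1}{i-1}\bmod 2$ as recorded in Lemma \ref{binary sequences, basic}(2). The engine of the argument is the characteristic-$2$ Frobenius identity $(t+1)^{2^k}=t^{2^k}+1$, which is exactly what produces the self-similarity the statement describes. Before using it, I would confirm that the polynomial identities are undisturbed by the relation $t^n=t+1$ throughout the relevant range: since $m\leqslant 2^k$ and $2^{k+1}\mid n-1$ gives $2^{k+1}\leqslant n-1$, the exponent obeys $2^k+m-1\leqslant 2^{k+1}-1\leqslant n-2<n$, so $R_{2^k+m}=(t+1)^{2^k+m-1}$ is already its own degree-$<n$ representative and no reduction modulo $t^n+t+1$ occurs.

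Next, apply the Frobenius identity to split
$$R_{2^k+m}=(t+1)^{2^k+m-1}=(t^{2^k}+1)(t+1)^{m-1}=t^{2^k}R_m+R_m.$$
Because $\deg R_m=m-1\leqslant 2^k-1<2^k$, the two summands have disjoint supports: $R_m$ occupies degrees below $2^k$ while $t^{2^k}R_m$ occupies degrees at least $2^k$. Reading off coefficients then settles the main claim. For $1\leqslant j\leqslant 2^k$ the monomial $t^{j-1}$ has degree $<2^k$, so only $R_m$ contributes and $R_{2^k+m}(j)=R_m(j)$; taking $j=i$ yields $R_{2^k+m}(i)=R_m(i)$. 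For the shifted index the monomial $t^{2^k+i-1}$ has degree $\geqslant 2^k$, so only $t^{2^k}R_m$ contributes, and its coefficient there equals the coefficient of $t^{i-1}$ in $R_m$, namely $R_m(i)$. Hence $R_{2^k+m}(2^k+i)=R_m(i)=R_{2^k+m}(i)$.

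For the final assertion, note $R_{2^k}=(t+1)^{2^k-1}$. Multiplying the telescoping sum $\sum_{j=0}^{2^k-1}t^j$ by $t+1$ gives $t^{2^k}+1=(t+1)^{2^k}$ over $\FF_2$; dividing by $t+1$ shows $(t+1)^{2^k-1}=1+t+\dots+t^{2^k-1}$, so every coefficient up to degree $2^k-1$ equals $1$ and $R_{2^k}(j)=1$ for $1\leqslant j\leqslant 2^k$. Equivalently, $\binom{2^k-1}{j-1}$ is odd for every such $j$ by Lucas' theorem, since $2^k-1$ is a string of ones in binary.

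The computations are routine once the factorization is in place; the only genuine point of care is that the disjoint-support decomposition is \emph{exact}, i.e.\ that $R_m$ never reaches degree $2^k$. This is precisely what the hypothesis $m\leqslant 2^k$ buys, and it is also why the boundary case $m=2^k$ behaves correctly rather than producing a collision of terms. The divisibility $2^{k+1}\mid n-1$ plays the auxiliary but essential role of keeping every exponent strictly below $n$, so that the clean identities in $\FF_2[t]$ are not corrupted by the quotient relation $t^n=t+1$.
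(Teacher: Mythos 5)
Your proof is correct, and it supplies something the paper deliberately omits: the paper gives no proof of Lemma \ref{fractal behavior of R} at all, stating only that it ``comes down to a well-established combinatorial fact'' --- the Sierpinski-triangle structure of Pascal's triangle mod $2$, via $R_k(i)=\binom{k-1}{i-1}\bmod 2$ --- and displaying pictures of $R_1,\dots,R_{16}$ for $n=17$ in place of an argument (an inductive proof via the recursion for $R_k$ exists only in commented-out source). Your route is the algebra underlying that folklore fact: the Frobenius identity $(t+1)^{2^k}=t^{2^k}+1$ in $\FF_2[t]$ yields the exact disjoint-support decomposition $R_{2^k+m}=t^{2^k}R_m+R_m$, from which both self-similarity assertions are read off coefficient by coefficient, and the identity $(t+1)^{2^k-1}=1+t+\cdots+t^{2^k-1}$ settles the final claim. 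Two details in your write-up are genuinely valuable rather than routine. First, the check that $2^{k+1}\mid n-1$ forces $2^k+m-1\leqslant 2^{k+1}-1\leqslant n-2$, so that no reduction modulo $t^n+t+1$ ever intervenes, is necessary and is nowhere addressed in the paper: the $R_j$ live in the quotient ring $\FF_2[t]/(t^n+t+1)$, and the clean binomial description would be corrupted if degrees reached $n$. Second, you use the correct normalization $R_k=(t+1)^{k-1}$, consistent with $R_1=1$, $R_{k+1}=(t+1)R_k$ and with item (2) of Lemma \ref{binary sequences, basic}, thereby silently correcting the off-by-one misprint $R_k=(t+1)^k$ in item (1) of that lemma. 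In short, the paper buys brevity by outsourcing to a known fact about Pascal's triangle; your argument buys a self-contained, rigorous proof at the cost of a paragraph, and is the proof one would want to include.
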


%Before we prove Lemma \ref{fractal behavior of R}, we show $R_1$, ..., $R_9$ for $n=9$ to present the fractal behavior of $R_k$. We give $R_k$ in a vector form, where $(a_1,...,a_n)$ corresponds to the polynomial $a_1 + a_2 x + ...+ a_n x^{n-1}$:

Instead of proving Lemma \ref{fractal behavior of R}, which comes down to a well-established combinatorial fact that odd Newton symbols in Pascal's Triangle form a fractal, we show a picture $R_1$, ..., $R_{16}$ for $n=17$ to present the fractal behavior of $R_k$. Figure \ref{R_k vector} gives $(R_k)_{k=1}^{16}$ in a vector form, where $(a_1,...,a_n)$ corresponds to the polynomial $a_1 + a_2 t + ...+ a_n t^{n-1}$. Figure \ref{Sierpinski} shows the first 16 lines of Pascal's Triangle mod 2 which is related to $R_k$ by $R_k(i)={{k-1}\choose{i-1}}\mod 2$. 
\begin{comment}
\begin{align*}
R_1 &= (1,0,0,0,0,0,0,0,0)\\
R_2 &= (1,1,0,0,0,0,0,0,0)\\
R_3 &= (1,0,1,0,0,0,0,0,0)\\
R_4 &= (1,1,1,1,0,0,0,0,0)\\
R_5 &= (1,0,0,0,1,0,0,0,0)\\
R_6 &= (1,1,0,0,1,1,0,0,0)\\
R_7 &= (1,0,1,0,1,0,1,0,0)\\
R_8 &= (1,1,1,1,1,1,1,1,0)\\
R_9 &= (0,0,0,0,0,0,0,0,1)
\end{align*}
\end{comment}

\begin{figure}[h!]
\includegraphics[height = 0.25\textheight]{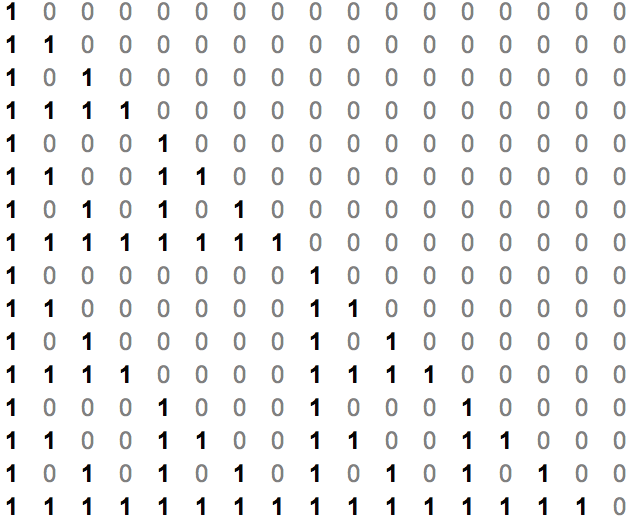}
\caption{$R_k$ for $1\leqslant k\leqslant 16=n-1$}
\label{R_k vector}
\end{figure}

\begin{figure}[h!]
\includegraphics[height = 0.25\textheight]{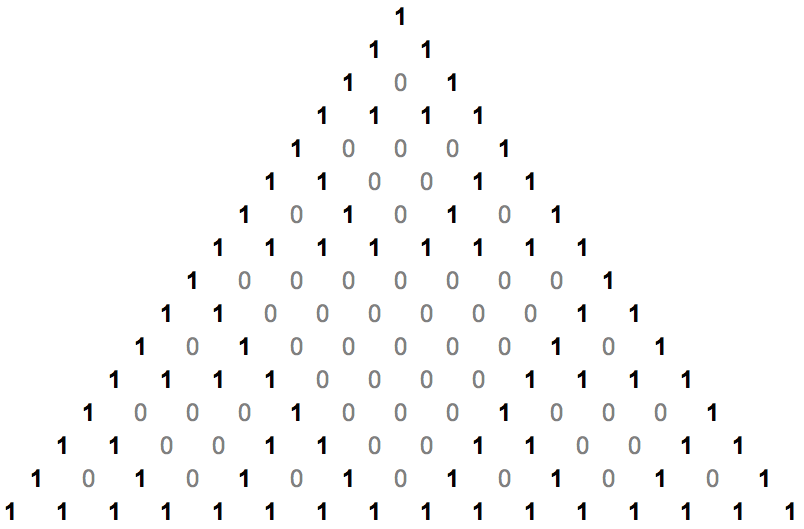}
\caption{First 16 lines of Pascal Triangle mod 2}
\label{Sierpinski}
\end{figure}

\begin{comment}

\begin{proof}
We will induct on $k$ and $m$. For $k=0$, we need to check that $R_1(1)=R_2(1)=R_2(2)$. $R_2(1)=R_1(1)+R_1(n)+R_1(n-1)=1+0+0=1$ and $R_2(2)=R_1(1)+R_1(2)=1+0=1$, hence the case of $k=0$ is true.

Suppose the statement is true for $k$. For $k+1$, $m=1$, we have the following:
\begin{align*}
R_{2^k+1}(1) &= R_{2^k}(1)+R_{2^k}(n)+R_{2^k}(n-1)=1+0+0=1 \\
R_{2^k+1}(j) &= R_{2^k}(j)+R_{2^k}(j-1)=1+1=0=R_1(j)\; \rm{for}\; 2\leqslant j\leqslant 2^k \\
R_{2^k+1}(2^k+1) &= R_{2^k}(2^k+1)+R_{2^k}(2^k)=0+1=1 \\
R_{2^k+1}(2^k+j) &= R_{2^k}(2^k+j)+R_{2^k}(2^k+j-1)=0+0\; \rm{for}\; j\geqslant2
\end{align*}
Hence the case is true for $m=1$. Suppose the statement is true for some $m-1$ s.t. $1<m<2^{k+1}$. Then we have the following:
\begin{align*}
R_{2^k+m}(j) &= R_{2^k+m-1}(j)+R_{2^k+m-1}(j-1)\\
			&=R_{m-1}(j)+R_{m-1}(j-1)\\
            &=R_m(j) \\
R_{2^k+m}(2^k+j) &= R_{2^k+m-1}(2^k+j)+R_{2^k+m-1}(2^k+j-1)\\
				&=R_{m-1}(j)+R_{m-1}(j-1)\\
				&=R_m(j) 
\end{align*}

In particular, $R_{2^{k+1}}(j)=R_{2^{k}+2^k}(j)=R_{2^k}(j)+R_{2^k}(2^k+j)=1+0=1$ for $1\leqslant j\leqslant 2^{k+1}$. 
\end{proof}
\end{comment}

One consequence of this lemma is that if $n-1=2^k$ for some $k\geqslant 2$, then $R_{n-1}=1+t+...+t^{n-2}=(1,...,1,0)$, i.e. all its coefficients but the last one are 0. We use this fact to prove that for these values of $n$, $\V$ has a third even term. 

Define the following polynomials in $\FF_2^n[t]/(t^n+t+1)$:
\begin{align*}
P_0 &= 1 + t + ... + t^{n-1}\\
P_1 &= 1 + t + ... + t^{n-2}\\
...\\
P_{k+1} &= (t^{n-1}+1)P_k \\
\end{align*}
These polynomials encode what happens at the beginning of the sequence. Writing them in the vector form, we get that:
\begin{align*}
P_0 &= (1(n),...,1(3n-2))=(1,...,1,1) \\
P_1 &= (1(n+2),...,1(3n))=(1,...,1,0) \\
...\\
P_k &= (1(n+2k),...,1(3n-2+2k))
\end{align*}

\begin{comment}
These binary sequences satisfy the following recurrence:

\[
  P_{k+1}(i)=\begin{cases}
               P_k(i+1), &1\leqslant i\leqslant n-1\\
               P_k(n)+P_k(1), &i=n
            \end{cases}
\]

Moreover, $P_{k+n}(i)=P_k(n)+\sum\limits_{j=1}^i P_k(j)$.
\end{comment}
Moreover, $P_{k+n}=(t+t^2+...+t^{n-1})P_k$. 

More importantly, there is the following relation between $P_k$'s and $Q_k$'s:
\begin{lemma}\label{P_k vs. Q_k}
$P_k=Q_{l+1}\iff P_{k+1}=Q_l$. Consequently, $P_{k+n}=R_l\iff P_k=R_{l+1}$.
\end{lemma}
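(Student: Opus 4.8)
The plan is to treat both families $(P_k)$ and $(Q_l)$ as windows of \emph{one and the same} binary sequence, namely the restriction of $1=1_\V$ to the odd integers in $(2n,x)$, and to exploit the single recurrence $1(u)=1(u-2)+1(u-2n)$ that governs this sequence throughout the range. Writing $P_k(i)=1(n+2k+2(i-1))$ and $Q_{l+1}(i)=1(x-3n+2(i-l-1))$, both vectors sample $1$ at $n$ consecutive odd positions (spaced by $2$); the only difference is which end of $(2n,x)$ the window sits at. I would first record the ``slide-by-one'' operator $T\colon\FF_2^n\to\FF_2^n$ given by $(Tv)(i)=v(i+1)$ for $1\le i\le n-1$ and $(Tv)(n)=v(1)+v(n)$, and show it advances both windows by one step towards larger positions. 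For $P$ this is immediate in the first $n-1$ coordinates, while the top coordinate $P_{k+1}(n)=1(3n+2k)$ is recovered from the recurrence as $1(3n-2+2k)+1(n+2k)=P_k(n)+P_k(1)$; the identical computation at the other end gives $Q_l(n)=1(x-n-2l)=1(x-n-2l-2)+1(x-3n-2l)=Q_{l+1}(n)+Q_{l+1}(1)$. Hence $T(P_k)=P_{k+1}$ and $T(Q_{l+1})=Q_l$ with the \emph{same} $T$, precisely because both windows obey the same recurrence.

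With this in hand the first equivalence is purely formal. The map $T$ is invertible: given $w=Tv$ one recovers $v$ by $v(j)=w(j-1)$ for $2\le j\le n$ and $v(1)=w(n-1)+w(n)$, so $T\in\mathrm{GL}_n(\FF_2)$. Therefore $P_k=Q_{l+1}$ forces $P_{k+1}=T(P_k)=T(Q_{l+1})=Q_l$, and conversely $P_{k+1}=Q_l$ forces $P_k=T^{-1}(P_{k+1})=T^{-1}(Q_l)=Q_{l+1}$. In the ring language of the paper this is nothing but multiplying the identity by the unit $t^{\pm1}$, using $Q_{l+1}=tQ_l$ and the analogous relation for the $P_k$; invertibility of $T$ is exactly the statement that $t$ is a unit, which holds since the modulus $t^n+t+1$ has nonzero constant term.

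For the consequence I would simply iterate. Rewriting the equivalence as $P_k=Q_m\iff P_{k+1}=Q_{m-1}$ and applying it $n$ times gives $P_k=Q_m\iff P_{k+n}=Q_{m-n}$. Taking $m=1+nl$ and using $R_l=Q_{1+n(l-1)}$, so that $Q_{1+nl}=R_{l+1}$ and $Q_{1+n(l-1)}=R_l$, this reads exactly $P_k=R_{l+1}\iff P_{k+n}=R_l$.

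The one point that genuinely needs care — and which I expect to be the main obstacle — is the boundary term in each slide. The identity $T(P_k)=P_{k+1}$ relies on invoking $1(u)=1(u-2)+1(u-2n)$ at $u=3n+2k$, and $T(Q_{l+1})=Q_l$ on invoking it at $u=x-n-2l$; each is valid only when its argument is odd and lies strictly in $(2n,x)$, which is where $x$ being the least even term above $2n$ guarantees that $2$ and $2n$ are the only available even summands. I would therefore state the lemma together with the explicit ranges of $k$ and $l$ for which the windows (and the single extra position each slide reaches) stay inside $(2n,x)$, and verify that the regime in which the lemma is later applied — linking the initial block encoded by the $P_k$ to the block just below $x$ encoded by the $R_l$ — respects these bounds. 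Once this range bookkeeping is pinned down, the algebra above is immediate.
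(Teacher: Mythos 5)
Your proof is correct, but it takes a genuinely different route from the paper's. The paper's own proof never touches the sequence: there $P_k$ and $Q_l$ are formal elements of $\FF_2[t]/(t^n+t+1)$ defined by $Q_{l+1}=tQ_l$ and $P_{k+1}=(t^{n-1}+1)P_k$, and the entire proof is the computation $P_{k+1}=(t^{n-1}+1)Q_{l+1}=(t^{n-1}+1)tQ_l=Q_l$ via the unit identity $t(t^{n-1}+1)=1$, plus induction for the $R$-statement. You instead prove the statement for the indicator windows themselves, deriving the two one-step identities $T(P_k)=P_{k+1}$ and $T(Q_{l+1})=Q_l$ from the recurrence $1(u)=1(u-2)+1(u-2n)$ and concluding by invertibility of your explicit slide operator. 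The skeleton (one invertible step shared by both families, then iterate) is the same, but your version buys something real: it is the statement about windows that the application to $x=2n^2+2$ actually consumes, whereas the paper proves a statement about formal polynomials and relies on its dictionary $Q_j(i)=1(x-3n+2i-2j)$ with $Q_j=t^{j-1}$ to carry it back to $\V$ --- and that dictionary is not literally valid at intermediate $j$. The true slide dynamics is your companion-type operator, which agrees with coefficientwise multiplication by $t$ only on part of $\FF_2^n$; for instance, for $n=5$ and $x=52$ the window $Q_5$ is $(1,0,0,0,1)$, while $t^4=(0,0,0,0,1)$. For the same reason, your side remark that $T$ ``is nothing but'' multiplication by $t^{\pm 1}$ repeats the paper's conflation and is false as stated: $T$ is similar to the multiplication-by-$t^{-1}$ matrix but not equal to it (e.g.\ $T$ sends $(1,0,\ldots,0)$ to $t^{n-1}$, whereas $t^{-1}\cdot 1=t^{n-1}+1$); happily your proof never uses this remark, since you exhibit $T^{-1}$ explicitly. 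Finally, the range bookkeeping you flag is a genuine hypothesis --- each invocation of the recurrence needs an odd argument strictly inside $(2n,x)$ --- but it is routine, holds throughout the regime where the lemma is later applied, and is suppressed by the paper as well, so it is a caveat rather than a gap.
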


\begin{proof}
The proof essentially follows from the fact that $t(t^{n-1}+1)=1$ in $\FF_2^n[t]/(t^n+t+1)$. Suppose $P_k=Q_{l+1}$. Then 
\begin{align*}
P_{k+1} &= (t^{n-1}+1)P_k\\
		&= (t^{n-1}+1)Q_{l+1}\\
        &= (t^{n-1}+1)t Q_l\\
        &= Q_l
\end{align*}
The second direction follows similarly. The other statement follows by induction.
\end{proof}

\begin{comment}
\begin{proof}
We will prove one direction; the other follows similarly. Suppose $P_k=Q_{k+1}$. For $1\leqslant i\leqslant n-1$: 
$$P_{k+1}(i)=P_k(i+1)=Q_{k+1}(i+1)=Q_k(i)$$
Moreover,
\begin{align*}
P_{k+1}(n)&=P_k(n)+P_k(1)\\
		&=Q_{k+1}(n)+Q_{k+1}(1)\\
        &=Q_k(n-1)+Q_k(n)+Q_k(n-1)\\
        &=Q_k(n)
\end{align*}
\end{proof}
\end{comment}

What this tells us is that the transformation taking $Q_k$ to $Q_{k+1}$ is an invertible operation whose inverse takes $P_k$ to $P_{k+1}$. This brings us to the proof of the first part of Theorem \ref{(2,n) theorem}.

\begin{lemma}\label{third even term}
Let $n-1=2^k$ for some $k\geqslant 2$. Then $x=2n^2+2$ is the third even element of $\V$. 
\end{lemma}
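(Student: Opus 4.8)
The plan is to feed the Sierpinski identity $R_{n-1}=P_1$ into the reciprocity of Lemma~\ref{P_k vs. Q_k}, iterate it until the configuration $R_1$ sitting immediately below $x$ is reached, and then read off $x$ from the resulting index; the remaining and distinctly harder task is to confirm that $2n^2+2$ is genuinely added to $\V$. I work throughout under the standing assumption that $x$ is the least even term exceeding $2n$.

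First I would isolate the single place where the hypothesis $n-1=2^k$ enters. Here $n-2=2^k-1$ has an all-ones binary expansion, so by Lucas's theorem every coefficient $\binom{n-2}{i-1}\bmod 2$ with $1\le i\le n-1$ equals $1$, while $\binom{n-2}{n-1}=0$. By Lemma~\ref{binary sequences, basic}(2) this says $R_{n-1}=(1,\dots,1,0)=1+t+\cdots+t^{n-2}=P_1$ in $\FF_2^n[t]/(t^n+t+1)$, with no reduction needed since the degree is $n-2<n$. This is exactly the identity flagged after Lemma~\ref{fractal behavior of R}. Next I would iterate Lemma~\ref{P_k vs. Q_k}: from $P_1=R_{n-1}$ the equivalence $P_{k+n}=R_l\iff P_k=R_{l+1}$ gives $P_{1+n}=R_{n-2}$, and inductively $P_{1+jn}=R_{n-1-j}$ for $0\le j\le n-2$. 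The terminal case $j=n-2$ yields $P_{(n-1)^2}=R_1$, since $1+(n-2)n=(n-1)^2$.

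Now I would pass to positions. By Lemma~\ref{Right before x}, $R_1=(1_\V(x-3n),\dots,1_\V(x-n-2))=(1,0,\dots,0)$, so it records $\V$ on the window $[x-3n,\,x-n-2]$. Read through the forward encoding, that same window is $P_{(x-4n)/2}$, whereas $P_{(n-1)^2}$ is pinned by its definition to the window $[\,n+2(n-1)^2,\ 3n-2+2(n-1)^2\,]=[2n^2-3n+2,\ 2n^2-n]$. Because the forward propagation is the deterministic map $P_m=t^{-m}P_0$, the two displays $P_{(n-1)^2}=(1,0,\dots,0)=P_{(x-4n)/2}$ identify the index of the pre-$x$ window as $(n-1)^2$, i.e.\ $x-3n=2n^2-3n+2$ and hence $x=2n^2+2$. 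In particular this shows that \emph{if} a third even term exists it must equal $2n^2+2$, which also rules out any even term strictly between $2n$ and $2n^2+2$ (such a term would itself be the least even term above $2n$, forcing it to equal $2n^2+2$).

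It remains to verify that $2n^2+2$ is actually an element of $\V$, and this is the step I expect to be the real obstacle. The recurrence gives $1_\V(x-n)=1_\V(x-n-2)+1_\V(x-3n)=0+1=1$, so $x-n=2n^2-n+2\in\V$ and $x=(x-n)+n$ is one representation. Any representation involving an even summand must have both summands even (as $x$ is even) and hence both lie in $\{2,2n\}$; the resulting sums $4,\,2n+2,\,4n$ all differ from $x$ for $n\ge 5$, so every representation is a sum of two odd terms of $\V$. The crux is therefore to prove that $(n,\,2n^2-n+2)$ is the \emph{only} such representation: under the reflection $a\mapsto x-a$, the pattern of odd elements of $\V$ on $(2n,x)$ must carry every $1$ other than the one at $a=n$ (and its mirror) to a $0$. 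This is precisely the rigidity furnished by the Sierpinski structure of Lemmas~\ref{binary sequences, basic}--\ref{fractal behavior of R}, and it is exactly where the power-of-$2$ hypothesis makes the representation unique—whereas for the remaining odd $n$ a second representation survives, which is the content of the companion Lemmas~\ref{S_l}--\ref{P_{2^k n}}.
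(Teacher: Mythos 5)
Your candidate-finding steps (Lucas giving $R_{n-1}=P_1$, then iterating Lemma \ref{P_k vs. Q_k} down to $P_{(n-1)^2}=R_1$) reproduce the paper's chain $P_1=R_{2^k}=Q_{1+n(n-2)}$, $P_{1+n(n-2)}=Q_1$ exactly. But the part you explicitly set aside as ``the real obstacle'' --- that $(2n^2-n+2)+n$ is the \emph{only} representation of $2n^2+2$ --- is the actual content of the lemma, and an appeal to ``the rigidity furnished by the Sierpinski structure'' is not an argument. The paper closes this gap in a few lines, and the mechanism is much smaller than the full fractal structure: write any other putative representation as $2n^2+2=\bigl((2l-1)n+2i\bigr)+\bigl(2n^2+2-(2l-1)n-2i\bigr)$ with $0\leqslant i<n$. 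The lower summand lies in $\V$ iff $R_{n-l}(i)=1$ (because $P_{1+(l-1)n}=R_{n-l}$, obtained by iterating Lemma \ref{P_k vs. Q_k} from $P_1=R_{n-1}$ --- a second place where $n-1=2^k$ is used), while the upper summand lies in $\V$ iff $R_l(n+1-i)=1$ (the definition of $R_l$ relative to $x$). Now part (5) of Lemma \ref{binary sequences, basic} ($R_k(i)=0$ for $i>k$) forces $i\leqslant n-l$ and $n+1-i\leqslant l$, hence $n+1\leqslant i+l\leqslant n$, a contradiction. Your reflection $a\mapsto x-a$ is the right pairing, but without translating \emph{both} memberships into coefficients of the $R$-polynomials and invoking this degree bound, no contradiction is produced, and the lemma is not proved.

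There is a second, logical, flaw: in your positional step you deduce $(x-4n)/2=(n-1)^2$ from the equality of polynomials $P_{(x-4n)/2}=P_{(n-1)^2}$, on the grounds that the forward propagation is ``deterministic.'' Determinism gives same index $\Rightarrow$ same window; you need the converse, i.e.\ injectivity of $j\mapsto P_j$. Since consecutive $P_j$ differ by multiplication by the unit $t^{n-1}+1$, injectivity on the relevant range amounts to a lower bound of order $n^2$ on the multiplicative order of $t$ in $\FF_2[t]/(t^n+t+1)$ --- a nontrivial claim you never establish (note this ring need not be a field: $t^5+t+1$ is already reducible). The paper does not rest any weight on such an identification: its displayed chain serves only to locate the candidate (``we should check whether $\dots$ $x=2n^2+2$ will work''), and the burden of proof is carried entirely by the uniqueness computation above. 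Consequently your parenthetical claim that this step already ``rules out any even term strictly between $2n$ and $2n^2+2$'' is unsupported. (A small further slip: $P_m=t^{-m}P_0$ is false, since $P_1\neq(t^{n-1}+1)P_0$; the paper's recursion starts at $P_1$, so $P_m=t^{-(m-1)}P_1$.)
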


\begin{proof}
$$P_1=(1,...,1,0)=R_{2^k}=Q_{1+n(2^k-1)}=Q_{1+n(n-2)}$$
Inducting on Lemma \ref{P_k vs. Q_k}, we have that 
\begin{align*}
P_{1+n(n-2)}&=Q_1=(1,0,...,0)\\
			&=(1(2n^2-3n+2),...,1(2n^2-n))\\
            &=(1(x-3n),...,1(x-n-2))
\end{align*}
In particular, we should check whether $x-3n=2n^2+2-3n$, or $x=2n^2+2$, will work. We claim it does, i.e. $2n^2+2$ is, in fact, in $\V$. We know $2n^2+2=(2n^2-n+2)+n$ is a representation because 
\begin{align*}
P_{2+n(n-2)} &= (1+t^{n-1})P_{1+n(n-2)}\\
			 &= 1+t^{n-1}\\
             &= (1,0,...,0,1)\\
             &= (1(2n^2-3n+4),...,1(2n^2+2-n))
\end{align*}
and so $2n^2+2-n\in\V$. Suppose $2n^2+2$ has another representation. Then $$2n^2+2=(2n^2+2-(2l-1)n-2i)+((2l-1)n+2i)$$ for some $0\leqslant i<n$. We claim that it is not possible for both $2n^2+2-(2l-1)n-2i$ and $(2l-1)n+2i$ to be in $\V$. $2n^2+2-(2l-1)n-2i\in\V$ iff $R_l(n+1-i)=1$, and similarly $(2l-1)n+2i\in\V$ iff $R_{n-l}(i)=1$. Suppose they are both in $\V$. By Lemma \ref{binary sequences, basic}, this implies that $n+1-i\leqslant l$ and $i\leqslant n-l$ which is equivalent to saying that $n+1\leqslant i+l\leqslant n$, a contradiction. Hence $2n^2+2$ has no other representation, and so it is in $\V$. 
\end{proof}

We will now introduce tools necessary to prove that if $n-1$ is not a power of 2, then it has no third term $x$. Let $k$ be the largest positive integer s.t. $2^k|n-1$, and call $m=(n-1)/2^k$. Note that $k\geqslant 1$ because $n$ is odd and $m\geqslant 3$ because $n-1$ is not a power of 2. Define $S_l\in\mathbb{F}_2^n[t]/(t^n+t+1)$ the following way: 
\begin{align*}
S_1(t) &= t + t^{2^k+1} + t^{2\cdot 2^k + 1} + ... + t^{(m-1)2^k + 1}\\
... \\
S_{k+1}(t) &= t^n S_k(t)=(t+1)S_k(t)
\end{align*}

\begin{comment}
$S_1(i)=1$ precisely if $2^k|i-2$, and
\[
  S_{l+1}(i)=\begin{cases}
               S_l(i)+S_l(i-1), &2\leqslant i\leqslant n\\
               S_l(1)+S_l(n)+S_l(n-1), &i=1
            \end{cases}
\]
\end{comment}

Note that $S_l$ satisfies the same recurrence relation as $R_l$; moreover, $S_1(2^k j+i)=R_1(i)$ for $0\leqslant j\leqslant m-1$. It implies the following lemma:

\begin{lemma}\label{S_l}
For all $1\leqslant l\leqslant 2^k$, $1\leqslant i\leqslant 2^k$, $0\leqslant j\leqslant m-1$, the following is true:
\begin{enumerate}
\item $S_l(1)=0$ for $l < 2^k$
\item $S_l(1+s^k j+i)=R_l(i)$, where $R_l\in\mathbb{F}_2^{2^k}$
\item $S_{2^k}=1+t+...+t^{n-1}=(1)_{i=1}^n$
\end{enumerate}
\end{lemma}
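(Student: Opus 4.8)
The plan is to exploit the closed form $S_l=(t+1)^{l-1}S_1$ in $\FF_2^n[t]/(t^n+t+1)$, which is immediate from the defining recurrence $S_{l+1}=(t+1)S_l$, together with the observation that $S_1=t\sum_{j=0}^{m-1}t^{2^k j}$ is literally a disjoint union of $m$ shifted copies of $R_1$, each occupying one block of $2^k$ consecutive exponents. First I would record that for $1\leqslant l\leqslant 2^k$ one has $R_l=(t+1)^{l-1}$ with $\deg R_l=l-1\leqslant 2^k-1$ (Lemma \ref{binary sequences, basic}(1),(5)), so that $R_l$ genuinely lives in $\FF_2^{2^k}$ and multiplication by $(t+1)^{l-1}$ spreads each single term $t^{2^k j+1}$ across the block of exponents $2^k j+1,\dots,2^k j+l$. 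The whole statement should then be an instance of the slogan ``$S_l$ is $m$ side-by-side copies of $R_l$.''

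The crux of the argument, and the only step I expect to require genuine care, is showing that \emph{no reduction modulo $t^n+t+1$ ever takes place} while $l$ ranges over $1,\dots,2^k$. This is a degree count: the top term of the naive product $(t+1)^{l-1}S_1$ sits at exponent $2^k(m-1)+1+(l-1)=2^k(m-1)+l$, and since $l\leqslant 2^k$ this is at most $2^k m=n-1<n$. Hence the polynomial product already has degree $<n$, equals its own canonical representative, and the relation $t^n=t+1$ is never invoked for these $l$. This is exactly what decouples the $m$ blocks: because $l\leqslant 2^k$ the intervals $[2^k j+1,\,2^k j+l]$ are pairwise disjoint and none wraps past degree $n-1$, so the $m$ copies evolve independently and identically under repeated multiplication by $(t+1)$. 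I would phrase this either as the above one-line degree bound or, in the paper's inductive idiom, as an induction on $l$ showing the support stays within $\{1,\dots,n-1\}$.

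With the no-wraparound fact in hand, the three assertions fall out by reading off coefficients from
\[
S_l \;=\; (t+1)^{l-1}S_1 \;=\; \sum_{j=0}^{m-1} t^{2^k j+1}(t+1)^{l-1} \;=\; \sum_{j=0}^{m-1}\sum_{i=1}^{2^k} R_l(i)\,t^{2^k j+i}.
\]
Since every exponent $p$ with $1\leqslant p\leqslant n-1$ has a unique decomposition $p=2^k j+i$ with $0\leqslant j\leqslant m-1$ and $1\leqslant i\leqslant 2^k$, the coefficient of $t^{2^k j+i}$ is precisely $R_l(i)$; translating to the indicator-index convention $S_l(P)=[t^{P-1}]S_l$ yields $S_l(1+2^k j+i)=R_l(i)$, which is part (2). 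Part (1) is the special case of the smallest exponent: the lowest term present is $t^{1}$ (from $j=0$, $i=1$, with $R_l(1)=1$), so the constant coefficient $S_l(1)=[t^0]S_l$ vanishes. Finally, for part (3) I would invoke $R_{2^k}=(t+1)^{2^k-1}=1+t+\dots+t^{2^k-1}$, whose coefficients are all $1$ (equivalently Lemma \ref{fractal behavior of R}, $R_{2^k}(j)=1$ for all $1\leqslant j\leqslant 2^k$); each block then contributes a full run of $2^k$ ones, and the $m$ blocks tile the exponents $1,\dots,2^k m=n-1$ seamlessly, so that $S_{2^k}=t+t^2+\dots+t^{n-1}$, i.e. the all-ones pattern on positions $2$ through $n$ already forced by part (2).
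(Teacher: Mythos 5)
Your closed-form computation is correct as ring arithmetic, and it does establish parts (1) and (2); but it does not prove part (3) --- in fact it contradicts it. You derive $S_{2^k}=t+t^2+\dots+t^{n-1}$, with constant coefficient $0$, and then describe this as ``the all-ones pattern on positions $2$ through $n$''; the lemma asserts $S_{2^k}=1+t+\dots+t^{n-1}=(1)_{i=1}^n$, all ones \emph{including} position $1$. The difference is not cosmetic: immediately after this lemma the paper identifies $S_{2^k}$ with $P_0=(1(n),\dots,1(3n-2))=(1,\dots,1)$ and walks the chain $S_{2^k-1}=P_n$, \dots, $S_1=P_{(2^k-1)n}$; with a $0$ in position $1$ that identification fails. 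A warning sign you passed over: part (1) is restricted to $l<2^k$ precisely because $S_{2^k}(1)=1$ is expected, whereas your argument yields $S_l(1)=0$ for every $l\leqslant 2^k$, making the restriction pointless and part (3) false.

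The source of the discrepancy is that your ``no reduction modulo $t^n+t+1$ ever takes place'' step --- which you single out as the crux --- is exactly where the needed term is lost. The $S_l$ are meant to be windows of indicator values, and the recurrence they must satisfy (the one the paper means by ``the same recurrence relation as $R_l$'', and the one used in the omitted proof, which mimics Lemma \ref{fractal behavior of R}) is coefficientwise:
\[
S_{l+1}(i)=S_l(i)+S_l(i-1)\ \ (2\leqslant i\leqslant n),\qquad S_{l+1}(1)=S_l(1)+S_l(n)+S_l(n-1),
\]
with a feedback term from the top of the window into position $1$. This coincides with multiplication by $t^n=t+1$ in $\FF_2[t]/(t^n+t+1)$ only while $S_l(n)=S_l(n-1)=0$. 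That holds for $l<2^k-1$, which is why your parts (1) and (2) come out right, but it fails at the very last step: by part (2), $S_{2^k-1}(n-1)=R_{2^k-1}(2^k-1)=\binom{2^k-2}{2^k-2}\bmod 2=1$, so the feedback flips position $1$ and gives $S_{2^k}(1)=S_{2^k-1}(1)+S_{2^k-1}(n)+S_{2^k-1}(n-1)=0+0+1=1$, i.e.\ the all-ones vector of part (3). (In fairness, the displayed definition $S_{l+1}=t^nS_l$ in the paper literally supports your computation --- the text is inconsistent on this point --- but the lemma as stated, its use right afterwards, and the paper's intended proof all refer to the window recurrence.) To repair your write-up, keep your block decomposition for $l\leqslant 2^k-1$, where the two dynamics agree, and then perform the single step $l=2^k-1\to 2^k$ by hand with the feedback recurrence above.
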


The proof of this lemma mimics the proof of Lemma \ref{fractal behavior of R} and is omitted.
\begin{comment}
\begin{proof}
For $l=1$, $S_l(2)=S_l(2^k+2)=...=S_l(2^k(m-1)+2)=1=R_1(1)$, and $S_l(1+s^k j+i)=0=R_l(i)$ for $2\leqslant i\leqslant 2^k$, so the statement is true.

Suppose the statement is true for some $1\leqslant l\leqslant 2^k-2$. Then:

\begin{align*}
S_{l+1}(1)&=S_l(1)+S_l(n)+S_l(n-1)=0+S_l(2^k m+1)+S_l(2^k m)\\
&=R_l(2^k)+R_l(2^k-1)=0+0=0\\
S_{l+1}(1+2^k j+i) &= S_l(1+2^k j+i) + S_l(1+ 2^k j + (i-1))\\
&=R_l(i)+R_l(i-1)=R_{l+1}(i)\; \rm{for}\; i\geqslant 2\\
S_{l+1}(1+2^k j+1) &= S_l(1+2^k j+1)+S_l(1+2^k j)=R_l(1)+R_l(n)\\
&=R_l(1)+R_l(n)+R_l(n-1)=R_{l+1}(1)\; \rm{because}\; R_l(n-1)=0
\end{align*}
So the statement is true for $l\leqslant 2^{k-1}$. For $l=2^k$,
\begin{align*}
S_{2^k}(1) &= S_{2^{k}-1}(1) + S_{2^k-1}(n) +S_{2^k-1}(n-1)\\
		   &= 0 + S_{2^k-1}(2^k m + 1) + S_{2^k - 1}(2^k m)\\
      	   &= R_{2^k-1}(2^k) + R_{2^k -1}(2^k-1)\\
           &= 0 +1 = 1\\
S_{2^k}(1+2^k +i) &= S_{2^k-1}(1+2^k j+i)+S_{2^k-1}(2^k j + i)\\
				  &= R_{2^k-1}(i)+R_{2^k - 1}(i-1)=R_{2^k}(i)\; \rm{for}\; i\geqslant 2\\                  
S_{2^k}(1+2^k +1) &= S_{2^k-1}(1+2^k j+1)+S_{2^k-1}(1+2^k j)\\
				  &= R_{2^k-1}(1)+R_{2^k - 1}(2^k)\\
                  &= R_{2^k-1}(1) = 1  
\end{align*}

Hence $S_{2^k}=(1)_{i=1}^{2^k}$.
\end{proof}
\end{comment}
Note that $P_0=(1(n),...,1(3n-2))=(1,...,1)=S_{2^k}$. Since $S_l$ is defined by the same recurrence as $R_l$, we can apply Lemma \ref{P_k vs. Q_k} again to see that $S_{2^k-1}=P_{n}$, $S_{2^k-2}=P_{2n}$, ..., $S_1=P_{(2^k-1)n}$.

\begin{lemma}\label{P_{2^k n}}
$$P_{2^k n}=(t+t^2+...+t^{2^k})+(t^{2\cdot 2^k+1}+...+t^{3\cdot 2^k})+...+ (t^{(m-1)\cdot 2^k+1}+...+t^{m\cdot 2^k})$$ In particular, $P_{2^k n}(i)=1$ for $i\geqslant n-2^k + 1$.
\end{lemma}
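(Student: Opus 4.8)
The plan is to obtain $P_{2^kn}$ by advancing the chain $S_{2^k}=P_0,\ S_{2^k-1}=P_n,\ \dots,\ S_1=P_{(2^k-1)n}$ by one further block of $n$ indices. By Lemma \ref{P_k vs. Q_k} (and the fact that the $S_l$ obey the same recurrence as the $R_l$), the correspondence reads $P_k=S_{l+1}\iff P_{k+n}=S_l$, so increasing the $P$-index by $n$ lowers the $S$-index by one. Applying this once more to $S_1=P_{(2^k-1)n}$ gives $P_{2^kn}=S_0$, where $S_0$ is the unique polynomial obtained by running the defining recurrence one step below $S_1$. Since that recurrence is the invertible shift $S_{l+1}(i)=S_l(i)+S_l(i-1)$ for $2\leqslant i\leqslant n$ together with $S_{l+1}(1)=S_l(1)+S_l(n)+S_l(n-1)$, the element $S_0$ is pinned down by the single system $S_1=\Phi(S_0)$, where $\Phi$ is one step of this recurrence, and the whole lemma reduces to solving this system explicitly.

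First I would solve the interior equations $S_1(i)=S_0(i)+S_0(i-1)$ for $2\leqslant i\leqslant n$ by discrete summation, $S_0(i)=S_0(1)+\sum_{j=2}^{i}S_1(j)$. By Lemma \ref{S_l} the ones of $S_1$ sit exactly at the positions $\equiv 2\pmod{2^k}$, namely $2,\,2^k+2,\,\dots,\,(m-1)2^k+2$, so the partial sum $\sum_{j=2}^{i}S_1(j)$ is a staircase incrementing by one each time $i$ crosses such a position. Reducing mod $2$, $S_0$ is therefore constant on each length-$2^k$ block $[s\cdot 2^k+2,\,(s+1)2^k+1]$, equal to $S_0(1)+1$ when the running count $s+1$ is odd and to $S_0(1)$ when it is even. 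Converting positions back to exponents via $\mathrm{exponent}=\mathrm{position}-1$, this is precisely a union of length-$2^k$ runs sitting on the blocks with $s$ even, matching the claimed expression once the integration constant $S_0(1)$ is fixed.

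The delicate point, which I expect to be the main obstacle, is the boundary equation at $i=1$, where the wraparound term $S_0(n)+S_0(n-1)$ intervenes; one must check it produces $S_1(1)=0$ rather than a spurious $1$. This is exactly where the hypothesis that $n-1$ is not a power of $2$ enters: it forces $m=(n-1)/2^k\geqslant 3$ to be odd. Counting ones, both $\sum_{j=2}^{n}S_1(j)$ and $\sum_{j=2}^{n-1}S_1(j)$ equal $m\equiv 1\pmod 2$, so $S_0(n)=S_0(n-1)=S_0(1)+1$ and the wraparound contributes $S_0(n)+S_0(n-1)=0$. The boundary equation $S_1(1)=S_0(1)+S_0(n)+S_0(n-1)$ then collapses to $S_1(1)=S_0(1)$, and since $S_1(1)=0$ by Lemma \ref{S_l} we get $S_0(1)=0$. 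With $S_0(1)=0$ the staircase of the previous paragraph gives $S_0=1$ exactly on the blocks with $s$ even, i.e. $S_0=P_{2^kn}$ equals the stated polynomial. I would write this boundary computation out in full and treat the interior telescoping as routine.

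Finally, for the ``in particular'' clause I would observe that the top run of $P_{2^kn}$ occupies the exponents $[(m-1)2^k+1,\,m2^k]=[(m-1)2^k+1,\,n-1]$, that is, the positions $[(m-1)2^k+2,\,n]$. Since $(m-1)2^k+2=m2^k-2^k+2=n-2^k+1$, every position $i\geqslant n-2^k+1$ lies in this run, whence $P_{2^kn}(i)=1$, as claimed.
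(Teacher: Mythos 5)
Your proof is correct, and it takes a genuinely different route from the paper's. The paper proves this lemma by direct arithmetic in the quotient ring: it writes $P_{2^k n}=(t+t^2+\cdots+t^{n-1})P_{(2^k-1)n}=(t+t^2+\cdots+t^{n-1})S_1$, expands the product in $\FF_2[t]/(t^n+t+1)$, reduces all powers $t^j$ with $j\geqslant n$ via $t^n=t+1$, and collects monomials to exhibit the alternating blocks. You never multiply anything out: you extend the chain $S_{2^k}=P_0,\dots,S_1=P_{(2^k-1)n}$ by one more application of Lemma \ref{P_k vs. Q_k}, characterize $P_{2^kn}$ as the unique solution $S_0$ of the one-step system $\Phi(S_0)=S_1$, and solve that linear system by telescoping the interior equations and fixing the constant $S_0(1)$ from the wraparound equation. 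Your route is more elementary and, in fact, more faithful to what the symbols mean: the coordinate recurrence with wraparound term $S_{l+1}(1)=S_l(1)+S_l(n)+S_l(n-1)$ is exactly the recurrence under which Lemma \ref{S_l} and the identification $S_{2^k-j}=P_{jn}$ hold, whereas literal multiplication by $t+\cdots+t^{n-1}$ in the quotient ring differs from the window-shift by boundary terms (carried out exactly, the paper's product acquires a spurious constant term $m\equiv 1$ which silently disappears in its last line, and the passage to $\sum_i(i+1)(t^{i2^k+1}+\cdots+t^{m2^k})$ is not a correct identity as written). Your staircase argument derives the block pattern rather than verifying it, and sidesteps this bookkeeping entirely.

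One quibble: you misidentify where the arithmetic hypothesis enters. The wraparound cancellation $S_0(n)+S_0(n-1)=0$ does not need $m$ odd; it only needs $S_0(n)=S_0(n-1)$, which follows from $S_1(n)=0$ (the top one of $S_1$ sits at position $n-2^k+1\leqslant n-1$) for every value of $m$. Where the parity of $m$ genuinely matters is at the end: the top block $s=m-1$ is a run of ones precisely because $m-1$ is even, and that is what gives the ``in particular'' clause. Moreover, $m$ is odd automatically, by maximality of $k$; the hypothesis that $n-1$ is not a power of $2$ only guarantees $m\geqslant 3$, which neither your argument nor the lemma itself actually requires (the formula holds trivially when $m=1$) --- that hypothesis is needed only downstream, when the lemma is combined with $R_{2^k+1}$ to manufacture a second representation of $x$. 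None of this invalidates any step you take, since every equality you assert is true; only the diagnosis in your third paragraph is off.
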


\begin{proof}
\begin{align*}
P_{2^k n} &= (t+t^2+...t^{n-1})P_{(2^k-1)n}\\
		  &= (t+t^2+...t^{n-1})S_1\\
          &= (t+t^2+...t^{n-1})\sum\limits_{i=0}^{m-1}t^{i 2^k+1}\\
          &= \sum\limits_{i=0}^{m-1}[(t^{i 2^k +2}+t^{i 2^k+3}+...+t^{m 2^k})+(t^{m 2^k +1}+t^{m 2^k +2}+...+t^{(m+i)2^k+1})]\\
          &=\sum\limits_{i=0}^{m-1}[(t^{i 2^k +2}+t^{i 2^k+3}+...+t^{m 2^k})+(1+t)(1+t+...+t^{i 2^k})]\\
          &=\sum\limits_{i=0}^{m-1} [1+(t^{i 2^k +1}+t^{i 2^k +2}+...+t^{m 2^k})]\\
          &=\sum\limits_{i=0}^{m-1} (i+1)(t^{i 2^k +1}+t^{i 2^k +2}+...+t^{m 2^k})\\
          &=(t+t^2+...+t^{2^k})+(t^{2\cdot 2^k+1}+...+t^{3\cdot 2^k})+...+ (t^{(m-1)\cdot 2^k+1}+...+t^{m\cdot 2^k})
\end{align*}
\end{proof}

\begin{comment}
\begin{proof}
Using the fact that $P_{l+n}(i)=P_l(n)+\sum\limits_{j=1}^i P_l(j)$, we can observe that 
$$P_{2^k n}(i)=P_{(2^k-1) n}(n)+\sum\limits_{j=1}^i P_{(2^k -1) n}(j)=S_1(n)+\sum\limits_{j=1}^i S_1(j)$$
We know $2^k\nmid j-2$ for $2^k(m-1)+2=n-2^k+1<j \leqslant n$, hence $S_1(j)=0$ for these values of $j$. In particular, this implies that
\begin{align*}
P_{2^k n}(n-2^k)&=P_{2^k n}(n-2^k+1)=...=P_{2^k n}(n)\\
&=\sum\limits_{j=1}^{n-2^k+1}S_1(j)
\end{align*}

Here is where we are using the fact that $n-1$ is not a power of 2. $S_1(j)=1$ for precisely $m$ values of $j$ (specifically, 2, $2^k+2, 2^k\times 2+2$, $2^k\times 3+2$,...,$2^k\times(m-1)+2=n-2^k+1$). Since $m$ is odd, we have 
$$P_{2^k n}(i)= \sum\limits_{j=1}^{n-2^k+1}S_1(j)=m=1$$
for desired values of $i$. 
\end{proof}
\end{comment}

In particular, $1((2^{k+1}+3)n-2^{k+1}) = P_{2^k n}(n-2^k)=1$, hence $(2^{k+1}+3)n-2^{k+1}$ is in $\V$. We also have $x-(2^{k+1}+3)n+2^{k+1}\in\V$ because $1(x-(2^{k+1}+3)n+2^{k+1})=R_{2^k+1}(2^k+1)=R_1(1)=1$. Hence $x$ has a second representation
$$x=((2^{k+1}+3)n-2^{k+1})+ (x-(2^{k+1}+3)n+2^{k+1}) $$
and so we arrive at contradiction. Thus $\V$ does not have a third even element when $n-1$ is not a power of 2. 

\end{proof}

\section{Proof of Theorem 1.3.}
Throughout this section, let $\V:=\V(a,b)$ be the $\V$-sequence generated by relatively prime $a$ and $b$ s.t. $a$ is even and $b>2a$ is odd. These sequences are conjectured to be regular (see Conjecture \ref{Conjecture about Regular Modified Ulam Sequences}). We will relate $\V$ to a subset of $\ZZ_{\geqslant 0}^2$ via Freiman homomorphism, and then we will provide a lower bound on the number of even terms that these sequences have.
Let $\W$ be the set of points in $\ZZ_{\geqslant 0}^2$ defined recursively in the following way:
\begin{enumerate}
\item $\W$ contains $(1,0), (0,1)$
\item $(2,0)$ is not in $\W$
\item Each subsequent vector $z$ in $\W$ is the smallest vector in $\ZZ_{\geqslant 0}^2\setminus{\{(2,0)\}}$ for which there is a unique pair of distinct $x,y$ already in $\W$ s.t. $x+y=z$
\end{enumerate}

$\W$ is thus defined almost the same way as $\V((1,0),(0,1))$ except that the vector $(2,0)$ is specifically excluded from the set. We can determine the structure of $\W$ explicitly:
\begin{lemma}\label{asymetric Ulam}
$\W$ consists precisely of the following points:
\begin{itemize}
\item $(1,0)$
\item $(n,1)$ for all $n\in\ZZ_{\geqslant 0}$
\item $(n,3)$ for even $n$
\item $(3,5)$
\item $(0,n)$ for $n\in\V(1,2)$
\item $(1,n)$ for $n=1$ or $4|n$
\item $(2,n)$ for $n\in 1+8\ZZ_{\geqslant 0}$, $n\in 3+8\ZZ_{\geqslant 0}$
\item $(n,m)$ for odd $n\geqslant 5$ and $m=1\; \rm{mod}\; 4$, $m\geqslant 5$
\end{itemize}
\end{lemma}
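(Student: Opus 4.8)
The plan is to sidestep the subtleties of the ordering on $\ZZ_{\geq 0}^2$ by reducing the claim to an \emph{order-independent fixed-point characterization} of $\W$. Write $r_{\mathcal{S}}(z)$ for the number of representations of $z$ as a sum of two elements of a set $\mathcal{S}$ (counted as in the defining construction, which in particular makes the column $i=0$ equal to $\V(1,2)$ as claimed). Since $(0,0)\notin\W$, any representation $z=x+y$ with $x,y\in\W$ uses summands that are \emph{strictly} below $z$ in the coordinatewise partial order, which is well founded with finitely many points below each $z$. Hence membership of $z$ in $\W$ is determined by the membership of the strictly smaller points, and $\W$ is the \emph{unique} set $\mathcal{S}$ satisfying (i) $(1,0),(0,1)\in\mathcal{S}$, (ii) $(2,0)\notin\mathcal{S}$, and (iii) $z\in\mathcal{S}\iff r_{\mathcal{S}}(z)=1$ for every other $z$. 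It therefore suffices to check that the explicitly listed set $\mathcal{S}$ satisfies (i)--(iii); as a byproduct this shows $\W$ is independent of the order used to build it.

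Conditions (i)--(ii) are immediate from the list, so the work is in verifying (iii), which I would do by sweeping through $\ZZ_{\geq 0}^2$ region by region. The low rows go quickly: row $j=0$ contains only $(1,0)$, since for $n\geq 3$ the point $(n,0)$ has no representation (the only available row-$0$ point is $(1,0)$ and $(2,0)$ is excluded); row $j=1$ is entirely present because $1=s+t$ forces one summand into row $0$, yielding the unique representation $(n,1)=(1,0)+(n-1,1)$; and row $j=2$ contains only $(0,2)=(0,1)+(0,1)$, since for $n\geq 2$ one has the two representations $(n,1)+(0,1)$ and $(n-1,1)+(1,1)$, with $(1,2)$ killed by $(1,1)+(0,1)$ and $(1,0)+(0,2)$. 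The column $i=0$ is self-contained (first coordinates can only split as $0+0$), so it mirrors the one-dimensional construction in the second coordinate and equals $\V(1,2)$ by Theorem~\ref{First two sequences}, accounting for the $(0,n)$ family.

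Next come the boundary layers $i=1,2$ and the transitional columns $i=3,4$. Here the allowed second coordinates are governed by how the row-$1$ data (all of $\ZZ_{\geq 0}$), the column-$0$ data ($\V(1,2)$), and the exclusion of $(2,0)$ interact; the condition $4\mid n$ in column $1$ emerges because $(1,n)$ has a single splitting precisely when the competing representations through $(1,1)+(0,n-1)$, $(1,0)+(0,n)$, and $(1,n-1)+(0,1)$ collapse to one, and the condition $n\equiv 1,3\pmod 8$ in column $2$ is then inherited by feeding the column-$1$ residues back through $\V(1,2)$. The sporadic point $(3,5)$ and the sparsity of columns $i=3,4$ (only $(3,1),(3,5)$ and $(4,1),(4,3)$) would be checked directly. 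Finally comes the bulk region $n\geq 5$ odd, $m\equiv 1\pmod 4$, $m\geq 5$, where I would establish a uniform dichotomy: that $r_{\mathcal{S}}(n,m)=1$ holds exactly under these parity and congruence conditions, by reducing each splitting $m=s+t$ to the $\V(1,2)$-structure in the second coordinate and imposing a first-coordinate parity constraint (odd $n$ in rows $m\equiv1\pmod 4$, even $n$ in row $3$, arbitrary $n$ in row $1$), while checking that no representation leaks in from the boundary layers.

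\textbf{Main obstacle.} The crux is the bulk region together with its interface with the boundary layers. Proving the equivalence ``unique representation $\iff$ ($n\geq 5$ odd and $m\equiv 1\pmod 4$, $m\geq 5$)'' requires a careful, uniform count that couples the rigid second-coordinate arithmetic inherited from $\V(1,2)$ with a first-coordinate parity constraint reminiscent of the Sierpinski/Pascal rigidity exploited in the proof of Theorem~\ref{(2,n) theorem}; the bookkeeping needed to guarantee that the low columns and rows neither create nor destroy representations of bulk points, and that the parity pattern persists row by row, is where the argument is most delicate.
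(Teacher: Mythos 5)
Your framework is sound, and in one respect it goes beyond what the paper records: the observation that every representation of $z$ uses summands strictly below $z$ in the well-founded coordinatewise order, so that $\W$ is the \emph{unique} fixed point of the representation-counting recursion, is a genuine contribution---it makes the lemma well-posed despite the vagueness of ``smallest vector'' in $\ZZ_{\geqslant 0}^2$, and it shows the set is independent of the order of construction. Your low-region checks (rows $0$, $1$, $2$ and column $0$) are correct. You also, implicitly, made the right call on a subtlety the paper itself fumbles: the paper's definition of $\W$ asks for a ``unique pair of \emph{distinct} $x,y$'', but under that literal reading the lemma is false---$(0,2)=(0,1)+(0,1)$ would have no admissible representation, and then column $0$ would collapse to $\{(0,1)\}$ rather than give $\V(1,2)$---so repeated summands must be allowed, exactly as in the $\V$-sequences that $\W$ is built to model. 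This should be flagged explicitly rather than buried in the parenthetical ``counted as in the defining construction,'' which as written is circular (it fixes the counting convention by appeal to the answer being proved).

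The genuine gap is the one you name yourself: everything from column $1$ outward---the condition $4\mid n$ in column $1$, the residues $n\equiv 1,3\pmod 8$ in column $2$, the sporadic $(3,5)$, the sparse columns $3,4$, and above all the bulk dichotomy that $r_{\mathcal{S}}(n,m)=1$ holds exactly for odd $n\geqslant 5$, $m\equiv 1\pmod 4$, $m\geqslant 5$---is described as what you \emph{would} do, not done. That verification is the entire mathematical content of the lemma; the set-up, by contrast, is cheap. It is also where errors would hide: for instance, in column $1$ the count is the number of $k\in\{0,1\}\cup 4\ZZ_+$ with $n-k\in\V(1,2)$, and one must check this equals $1$ precisely when $n=1$ or $4\mid n$; in the bulk, a point such as $(3,9)$ acquires a second representation only through the interplay of columns $1$ and $2$ (here $(1,8)+(2,1)$ and $(1,0)+(2,9)$), so the induction across rows and columns must be run simultaneously and the boundary leakage you worry about is real. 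To be fair, the paper's own ``proof'' consists of declaring the case exhaustion routine and pointing to a figure, so your proposal matches the published argument in rigor and exceeds it in set-up; but judged as a proof it is incomplete, and your closing remark that the bulk case is ``delicate'' sits uneasily with leaving it entirely unexecuted. To finish, you must actually carry out the induction you outline.
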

\begin{proof}
The proof is an unpleasant exercise in induction and case exhaustion but presents no substantial difficulties and requires no interesting ideas. Instead of presenting it, we provide a picture of $\W$ in Figure \ref{nonsymmetric}. 
\end{proof}

	\begin{figure}[h!]
	\includegraphics[height = 0.2\textheight]{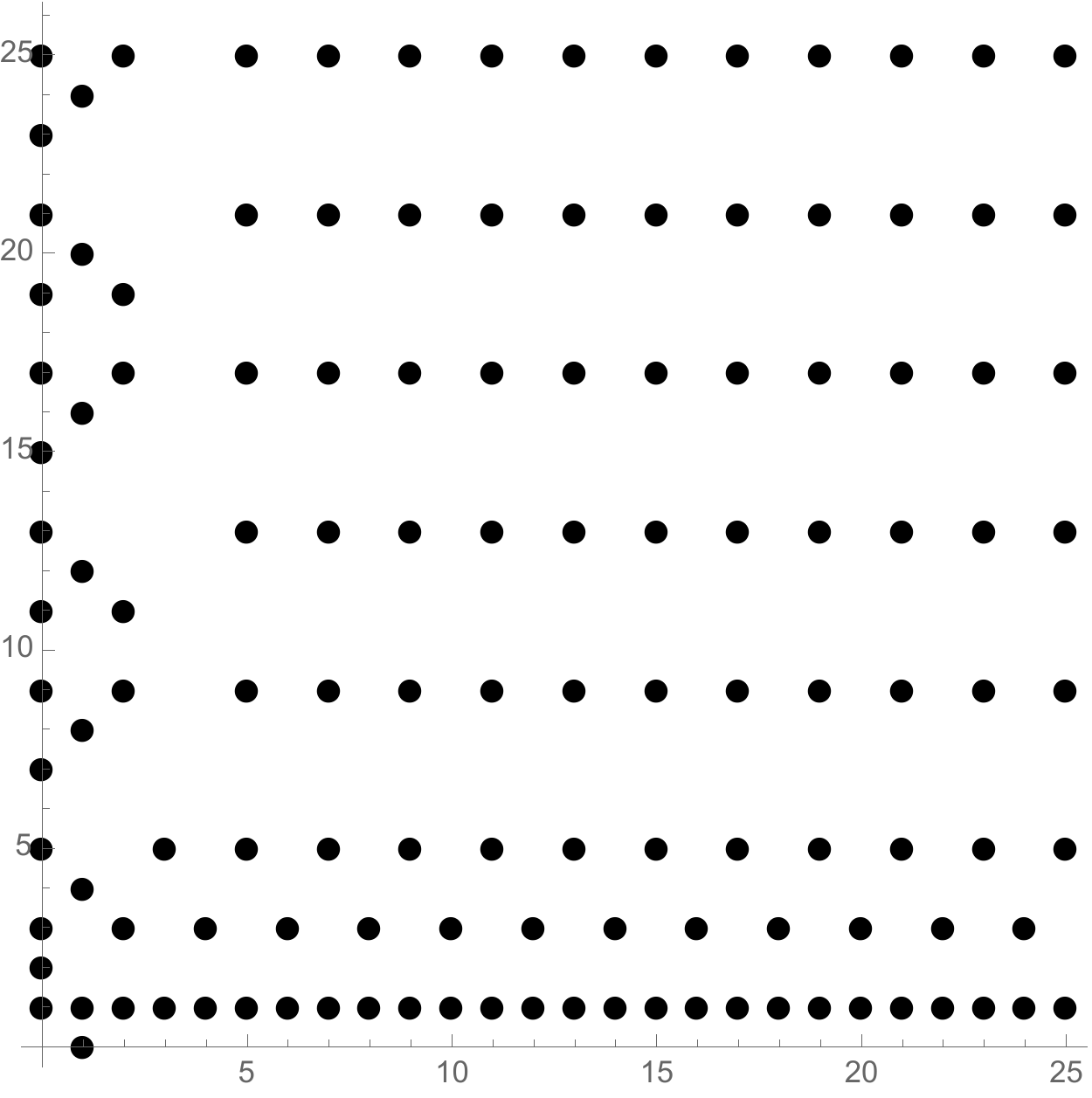}
	
	\caption{The structure of $\W$.}
	\label{nonsymmetric}
	\end{figure}

The main point of this section is to show that $\V\cap [0,ab]$ is structurally equivalent to a subset of $\W$, and hence we can deduce information on the structure of $\V$ from the structure of $\W$.

\begin{definition}\label{Freiman homomorphism for modified Ulam}
\begin{align*}
f:\V &\to \mathbb{Z}^2/(-b,a) \\
a &\mapsto (1,0)+\mathbb{Z}(-b,a)\\
b &\mapsto (0,1)+\mathbb{Z}(-b,a)\\
n+m& \mapsto f(n)+f(m)+\mathbb{Z}(-b,a), n,m\in \V
\end{align*}
For simplicity, we write $(c,d):=(c,d)+\mathbb{Z}(-b,a)$.
\end{definition}

\begin{lemma}
$f$ is a well-defined Freiman homomorphism of order 2.
\end{lemma}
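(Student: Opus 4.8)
The plan is to factor the whole statement through the numerical semigroup generated by the two generators. Write $\langle a,b\rangle := \{\alpha a + \beta b : \alpha,\beta\in\ZZ_{\geqslant 0}\}$. First I would check, by induction on the greedy construction, that every element of $\V$ lies in $\langle a,b\rangle$: the generators $a,b$ obviously do, and each later term of $\V$ is by definition a sum $n+m$ of two earlier terms, which lie in $\langle a,b\rangle$ by the inductive hypothesis, so their sum does as well. This lets me introduce an auxiliary map $\tilde f:\langle a,b\rangle \to \ZZ^2/\ZZ(-b,a)$ by $\tilde f(\alpha a + \beta b) = (\alpha,\beta)$, whose restriction to $\V$ will turn out to coincide with $f$. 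The advantage of working with $\tilde f$ is that it is defined on the entire ambient semigroup, so I never need to assume that a sum of two elements of $\V$ is again in $\V$.

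The heart of the matter is the well-definedness of $\tilde f$, since a single integer may have several representations as $\alpha a + \beta b$. The hard part will be exactly this point, and it comes down to a short gcd/lattice computation. Suppose $\alpha a + \beta b = \alpha' a + \beta' b$. Then $(\alpha-\alpha')a = (\beta'-\beta)b$, and since $\gcd(a,b)=1$ we get $b \mid \alpha-\alpha'$, say $\alpha-\alpha' = tb$; substituting back gives $\beta-\beta' = -ta$. Hence $(\alpha-\alpha',\beta-\beta') = -t(-b,a) \in \ZZ(-b,a)$, so the two representations agree in the quotient and $\tilde f$ is single-valued. This is precisely why the quotient by $\ZZ(-b,a)$ appears in Definition \ref{Freiman homomorphism for modified Ulam}: it records the unique relation $b\cdot a = a\cdot b$ between the generators, i.e. that the integer $ab$ is simultaneously $b$ copies of $a$ and $a$ copies of $b$. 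I expect everything apart from this verification to be routine bookkeeping.

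It then remains to assemble the pieces. By construction $\tilde f$ is additive in the coefficients, so it is a semigroup homomorphism: $\tilde f(x)+\tilde f(y) = \tilde f(x+y)$ for all $x,y\in\langle a,b\rangle$. I would next confirm that $f = \tilde f|_\V$ really matches the recursive definition, by strong induction on the value of the element: we have $\tilde f(a) = (1,0)$ and $\tilde f(b) = (0,1)$, and for a non-generator $c = n+m \in \V$ with $n,m\in\V$ strictly smaller than $c$ (so the recursion is well-founded, as summands are smaller), the relation $\tilde f(c) = \tilde f(n)+\tilde f(m)$ forces agreement with $f(c)=f(n)+f(m)$. Finally, the Freiman property of order $2$ is immediate: if $s+t = u+v$ with $s,t,u,v\in\V$, then, since all four lie in $\langle a,b\rangle$ and $s+t=u+v$ lies there too, $f(s)+f(t) = \tilde f(s+t) = \tilde f(u+v) = f(u)+f(v)$, using nothing beyond the fact that $\tilde f$ is a well-defined semigroup homomorphism. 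This completes the argument.
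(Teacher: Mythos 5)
Your proof is correct, and it takes a genuinely different route in its organization, though the arithmetic core coincides with the paper's. The paper verifies the Freiman condition directly on quadruples: it takes $z_1,z_2,z_3,z_4\in\V$ with $z_1+z_2=z_3+z_4$, writes $z_i=ax_i+by_i$, and from $a(x_1+x_2-x_3-x_4)+b(y_1+y_2-y_3-y_4)=0$ together with $\gcd(a,b)=1$ concludes that the coordinate discrepancy equals $c(-b,a)$ for an integer $c$, hence vanishes in the quotient. You instead factor $f$ through the coordinate map $\tilde f$ on the full numerical semigroup $\langle a,b\rangle$, prove $\tilde f$ is single-valued by the same gcd/lattice computation (the kernel of $(\alpha,\beta)\mapsto \alpha a+\beta b$ is exactly $\ZZ(-b,a)$), and then deduce both the identification $f=\tilde f|_\V$ and the Freiman property formally. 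So the key lemma is the same in substance; what differs is the packaging, and yours buys two things the paper leaves implicit. First, your strong induction showing that the recursive definition of $f$ agrees with the coordinate map addresses the well-definedness of $f$ on $\V$ itself --- the paper's proof silently assumes each $f(z_i)$ is a legitimately assigned pair $(x_i,y_i)$ satisfying $z_i=ax_i+by_i$, which is precisely what your induction establishes. Second, the extension of $f$ to $\V+\V$, which the paper asserts as a consequence after the lemma, is automatic in your setup, since $\tilde f$ is already defined on all of $\langle a,b\rangle \supseteq \V+\V$. The paper's direct verification is shorter, but it is the same lattice computation specialized to the particular quadruple at hand.
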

\begin{proof}
Let $z_1,z_2,z_3,z_4\in \V$ s.t. $f(z_i)=(x_i,y_i)$, i.e. $z_i=a x_i+b y_i$, and suppose that $z_1+z_2=z_3+z_4$. We need to show that $f(z_1)+f(z_2)=f(z_3)+f(z_4)$. Note that 
\begin{align*}
0=z_1+z_2-z_3-z_4=a(x_1+x_2-x_3-x_4)+b(y_1+y_2-y_3-y_4)
\end{align*}
Since $a|(y_1+y_2-y_3-y_4)$, we have that 
\begin{align*}
f(z_1)+f(z_2)-f(z_3)-f(z_4)&=(x_1+x_2-x_3-x_4, y_1+y_2-y_3-y_4)\\
&=(-\frac{b}{a}(y_1+y_2-y_3-y_4), y_1+y_2-y_3-y_4)
\end{align*}
By assumption, $c:=(y_1+y_2-y_3-y_4)/a\in\mathbb{Z}$, hence 
\begin{align*}
f(z_1)+f(z_2)-f(z_3)-f(z_4)=(-bc,ac)=c(-b,a)=0
\end{align*}
Thus $f$ is a well-defined Freiman homomorphism of order 2. 
\end{proof}
This implies in particular that we can extend $f$ to $\V+\V$, where $f(n+m)=f(n)+f(m)$ for all $n,m\in \V$. From now on, $f$ refers to this extended map from $\V+\V$ to $\mathbb{Z}^2/(-b,a)$.

\begin{lemma}
$f|_\V$ is injective.
\end{lemma}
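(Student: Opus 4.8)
The plan is to exhibit an explicit left inverse to $f$, which forces injectivity with essentially no work. Define the evaluation map
\[
g : \mathbb{Z}^2/(-b,a) \to \mathbb{Z}, \qquad g(c,d) = ac + bd .
\]
First I would check that $g$ is well defined on the quotient. Since $a\cdot(-b) + b\cdot a = 0$, the value $ac+bd$ is unchanged when $(c,d)$ is replaced by $(c,d)+k(-b,a)$ for any $k\in\mathbb{Z}$, so $g$ descends to $\mathbb{Z}^2/(-b,a)$. It is moreover visibly additive, $g\big((c,d)+(c',d')\big) = g(c,d)+g(c',d')$, so $g$ is a group homomorphism (in fact, because $\gcd(a,b)=1$, it is an isomorphism onto $\mathbb{Z}$, though I only need that it is well defined).

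Next I would prove that $g\circ f = \mathrm{id}_\V$ by induction along the greedy construction of $\V$. For the base case, $g(f(a)) = g(1,0) = a$ and $g(f(b)) = g(0,1) = b$. For the inductive step, every later term of $\V$ has the form $n+m$ with $n,m\in\V$ strictly earlier, and combining the defining relation $f(n+m)=f(n)+f(m)$ from Definition \ref{Freiman homomorphism for modified Ulam} (legitimate since $f$ is a genuine homomorphism by the preceding lemma) with the fact that $g$ is a homomorphism,
\[
g\big(f(n+m)\big) = g\big(f(n)+f(m)\big) = g(f(n)) + g(f(m)) = n+m ,
\]
where the last equality is the inductive hypothesis. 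Hence $g(f(z)) = z$ for every $z\in\V$. Injectivity is then immediate: if $z_1,z_2\in\V$ satisfy $f(z_1)=f(z_2)$, then applying $g$ gives $z_1 = g(f(z_1)) = g(f(z_2)) = z_2$.

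The only point demanding any care — and the nearest thing to an obstacle — is the well-definedness of $g$ on the quotient, i.e.\ that collapsing along $(-b,a)$ does not destroy the linear functional $(c,d)\mapsto ac+bd$. This is exactly the identity $a(-b)+b(a)=0$, which is precisely why $(-b,a)$ is the correct lattice to quotient by in Definition \ref{Freiman homomorphism for modified Ulam}: the map $f$ is, in effect, recording a fixed $\mathbb{Z}$-linear representation of each element of $\V$ in terms of the generators $a$ and $b$, and $g$ simply evaluates that representation back in $\mathbb{Z}$. Everything beyond this is a one-line induction, and notably the argument uses neither that $a$ is even nor that $b>2a$; it holds for any coprime $a,b$.
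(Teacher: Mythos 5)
Your proof is correct and is essentially the paper's argument in different packaging: the paper also rests on the two facts that the evaluation $(x,y)\mapsto ax+by$ is constant on cosets of $(-b,a)$ and that any representative $(x_i,y_i)$ of $f(z_i)$ satisfies $z_i = ax_i+by_i$, from which it computes $z_1 - z_2 = a(x_1-x_2)+b(y_1-y_2) = a(-nb)+b(na) = 0$ directly rather than naming a left inverse $g$. If anything, your write-up is slightly more careful, since you justify the identity $g(f(z))=z$ by induction along the greedy construction of $\V$, whereas the paper asserts it without proof in the phrase ``$f(z_i)=(x_i,y_i)$, i.e.\ $z_i=ax_i+by_i$.''
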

\begin{proof}
Let $z_1,z_2\in \V$ be such that $f(z_1)=f(z_2)$. That is to say, if $f(z_1)=(x_1,y_1), f(z_2)=(x_2,y_2)$, then $x_1=x_2-na$ and $y_1=y_2+nb$ for some $n\in\mathbb{Z}$. Therefore 
\begin{align*}
z_1-z_2&=ax_1+by_1-ax_2-by_2\\
&=a(x_1-x_2)+b(y_1-y_2)\\
&=a(-nb)+b(na)=0
\end{align*}
Hence $f$ is injective.
\end{proof}

Consider the box $I=\{(i,j):0\leqslant i < b, 0\leqslant j <a\}\subset\mathbb{Z}^2$. Each element of this box is a representative of a distinct coset of $\mathbb{Z}^2/(-b,a)$. Hence we can view $f|_{f^{-1}(I)}$ as an injective map from $\V+\V$ to $I$. Since $f|_{f^{-1}(I)}$ sends generators of $\V$ to generators of $\W$ and preserves the additive structure of both sets, we have that $\V\cap f^{-1}(I)$ is structurally equivalent to $\W\cap I$. Moreover, $(\V+\V)\cap f^{-1}(I)$ is structurally equivalent to $(\W+\W)\cap I$. What this means is that initially, the set $\V(a,b)$ behaves as $\W$. This gives us the precise structure of $\V(a,b)\cap [0,ab)$.
As a side note, $ab\notin \V$ because $ab$ has no representation. Therefore we know the structure of $\V(a,b)\cap[0,ab]$.

\begin{theorem}
$\V(a,b)\cap [0,ab]$ consists of numbers of the following form which are no greater than $ab$:
\begin{itemize}
\item $a$
\item $b+na$ for $n\in\ZZ_{\geqslant 0}$
\item $3b+na$ for even $n$
\item $5b+3a$
\item $nb$ for $n\in\V(1,2)$
\item $nb+a$ for $n=1$ or $4|n$
\item $nb+2a$ for $n\in 1+8\ZZ_{\geqslant 0}$, $n\in 3+8\ZZ_{\geqslant 0}$
\item $mb+na$ for odd $n\geqslant 5$ and $m=1\; \rm{mod}\; 4$, $m\geqslant 5$\end{itemize}
\end{theorem}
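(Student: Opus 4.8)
The plan is to read the theorem directly off the explicit description of $\W$ in Lemma \ref{asymetric Ulam}, transported through the Freiman homomorphism $f$ of Definition \ref{Freiman homomorphism for modified Ulam}. First I would make the identification $\mathbb{Z}^2/(-b,a)\cong\mathbb{Z}$ completely explicit. The map $\phi(x,y)=ax+by$ descends to the quotient because $\phi(-b,a)=-ab+ba=0$, it is a group isomorphism since $\gcd(a,b)=1$, and it inverts $f$ in the sense that $\phi\circ f=\mathrm{id}$ on $\V$: if $z=ax+by\in\V$ then $f(z)=(x,y)$ and $\phi(f(z))=z$. Thus $f$ is nothing more than the operation of lifting an integer to a lattice point, and recovering the integer attached to a vector $(c,d)$ of $\W$ is the single substitution $(c,d)\mapsto ca+db$.

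Second, I would invoke the structural equivalence $\V\cap f^{-1}(I)\cong\W\cap I$ recorded before the statement. Because $f$ carries the generators $a,b$ to the generators $(1,0),(0,1)$ of $\W$, because it preserves additive representations (being an injective Freiman homomorphism of order $2$), and because the excluded vector $(2,0)$ corresponds to $2a$, which never enters $\V$ since $2a<b$, the greedy constructions of $\V$ and of $\W$ proceed in lockstep in the relevant range. Applying $\phi$ to the eight families of Lemma \ref{asymetric Ulam} then turns $(1,0)$ into $a$, $(n,1)$ into $b+na$, $(n,3)$ with $n$ even into $3b+na$, $(3,5)$ into $5b+3a$, $(0,n)$ with $n\in\V(1,2)$ into $nb$, $(1,n)$ into $nb+a$, $(2,n)$ into $nb+2a$, and $(n,m)$ into $mb+na$, which are exactly the eight families in the statement. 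Keeping only translates that are at most $ab$ yields the claimed intersection with $[0,ab]$.

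The step I expect to be the genuine obstacle is controlling the two processes precisely up to the endpoint $ab$, rather than merely asymptotically. Two things must be pinned down at once. The phrase \emph{smallest eligible element} must mean the same thing on both sides, so the order in which $\W$ admits vectors must agree with increasing value of $\phi$ throughout the region examined, guaranteeing that the two greedy rules make matching decisions at every stage. And the passage to the quotient must neither manufacture nor suppress representations: every decomposition $z=z_1+z_2$ with $z_1,z_2\in\V$ must lift to a decomposition of $f(z)$ whose summands again lie over $I$, and conversely, with no two lattice points of the relevant region colliding under $\phi$. Injectivity of $f|_\V$ excludes collisions and the order-$2$ property transports representation counts, but splicing these together to certify the bijection up to exactly $ab$ is where the work concentrates. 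The remaining delicacy is the single borderline value $ab$ itself, which falls outside the fundamental box $I$: here one checks directly that $ab$ is produced precisely when the family $nb$ reaches $n=a$, that is, only for $a=2$, in agreement with $2b$ being the second even term of $\V(2,b)$, and is otherwise absent, as noted before the statement. Once these matching and boundary facts are secured, the theorem is the mechanical substitution above.
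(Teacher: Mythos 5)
Your proposal takes essentially the same route as the paper: the paper's entire proof is the one-sentence invocation of Lemma \ref{asymetric Ulam} together with the structural equivalence $\V\cap f^{-1}(I)\cong\W\cap I$ coming from the Freiman homomorphism, which is precisely what you spell out (including the observation that excluding $(2,0)$ mirrors the fact that $2a<b$ keeps $2a$ out of $\V$). If anything, your handling of the endpoint is more careful than the paper's side note asserting $ab\notin\V$ outright, since for $a=2$ one has $ab=2b=b+b\in\V$, exactly the case $n=a=2$ of the family $nb$, $n\in\V(1,2)$, that you single out.
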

\begin{proof}
The proof follows directly from Lemma \ref{asymetric Ulam} and the structural equivalence.
\end{proof}
\begin{corollary}
The only even terms of $\V(a,b)$ less than $ab$ are $a, 2b$ and $nb+a$ for $n\in 4\ZZ_{\geqslant 0}\cap [0,a)$.
\end{corollary}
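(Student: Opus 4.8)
The plan is to extract the even terms directly from the explicit list of $\V(a,b)\cap[0,ab]$ furnished by the preceding theorem, testing the parity of each of its eight families under the standing hypotheses that $a$ is even and $b$ is odd. The single fact that drives everything is that a term of the form $cb+da$ has the same parity as $c$: indeed $da$ is even because $a$ is, while $cb\equiv c\pmod 2$ because $b$ is odd. With this observation in hand, each family can be classified at a glance.

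First I would discard the families that are forced to be odd. The terms $b+na$, $3b+na$, $5b+3a$, the terms $nb+2a$ with $n\equiv 1,3\pmod 8$, and the terms $mb+na$ with $m\equiv 1\pmod 4$ all carry an odd coefficient on $b$, so by the parity remark they are odd and contribute no even terms. The isolated term $a$ is even and supplies the first even term on the list.

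Next I would treat the two families that actually produce even terms. For $nb$ with $n\in\V(1,2)$, the parity remark gives evenness exactly when $n$ is even; but Theorem \ref{First two sequences} shows $\V(1,2)=(2\ZZ_+-1)\cup\{2\}$ has a unique even element, $n=2$, so the only even term here is $2b$. For $nb+a$ with $n=1$ or $4\mid n$, evenness holds exactly when $n$ is even, i.e. when $4\mid n$, yielding the even terms $nb+a$ with $4\mid n$ (the case $n=0$ recovering $a$).

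Finally I would pin down the admissible range of $n$. The condition $nb+a\leqslant ab$ rearranges to $n\leqslant a-a/b$, and since $0<a/b<1$ (because $b>2a>a$), this is exactly $n<a$. Hence the even terms are precisely $a$, $2b$, and $nb+a$ for $n\in 4\ZZ_{\geqslant 0}\cap[0,a)$, as asserted. I expect no genuine obstacle here: all the difficulty lives in the structure theorem already established, and the corollary is routine parity bookkeeping. The only points deserving a moment's care are recognizing that $\V(1,2)$ contributes exactly one even member and converting the cutoff $ab$ into the clean constraint $n<a$.
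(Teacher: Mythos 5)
Your proof is correct and takes exactly the route the paper intends: the paper states this corollary without any proof, treating it as an immediate parity read-off from the preceding structure theorem (using that $a$ is even and $b$ is odd, so $cb+da\equiv c \pmod 2$), which is precisely the bookkeeping you carry out. The two points you flag as needing care --- that $2$ is the unique even element of $\V(1,2)$, and that the cutoff $ab$ translates to $n<a$ for the family $nb+a$ --- are handled correctly, so there is nothing to add.
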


\begin{corollary}
$\V(a,b)$ has at least $2+\lfloor{\frac{a}{4}}\rfloor$ even terms.
\end{corollary}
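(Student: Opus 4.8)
The plan is to derive the bound purely by counting the even terms that the preceding corollary already supplies, so that no new structural analysis of $\V$ is required below $ab$. By that corollary the numbers $a$, $2b$, and $nb+a$ for $n\in 4\ZZ_{\geqslant 0}\cap[0,a)$ all lie in $\V(a,b)$, and each is even: since $a$ is even and $b$ is odd, $nb+a$ is even exactly when $4\mid n$, while $2b$ is even outright. The whole problem then reduces to counting how many \emph{distinct} such numbers there are.

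First I would settle distinctness. The element $a$ is the $n=0$ instance of the family $nb+a$, hence not new. The element $2b$ is distinct from every $nb+a$: the equation $2b=nb+a$ gives $(2-n)b=a$, impossible for $n=0$ (as $b>2a>a$) and for $n\geqslant 4$ (as then $nb+a\geqslant 4b+a>2b$). For $a\geqslant 4$ the preceding corollary already lists $2b$ among the even terms below $ab$, while for $a=2$ the asserted bound is merely $2=2+\lfloor 2/4\rfloor$, realized by $a$ and $2b$ themselves. Thus in all cases $\V(a,b)$ contains, as distinct even terms, $2b$ together with $\{\,nb+a:4\mid n,\ 0\leqslant n<a\,\}$.

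Next I would count. The multiples of $4$ in $[0,a)$ are $0,4,\dots,4\lfloor(a-1)/4\rfloor$, so there are exactly $\lceil a/4\rceil$ numbers of the form $nb+a$; adjoining $2b$ gives $1+\lceil a/4\rceil$ distinct even terms. Since $1+\lceil a/4\rceil=2+\lfloor a/4\rfloor$ precisely when $4\nmid a$, this settles the corollary whenever $a\equiv 2\pmod 4$.

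The main obstacle is the case $4\mid a$. There $\lceil a/4\rceil=\lfloor a/4\rfloor$, so the even terms furnished below $ab$ number only $1+\lfloor a/4\rfloor$, one short of the claim. Because the Freiman model $\W$ governs $\V$ only on $[0,ab)$, this last even term must be located at or beyond $ab$, and I expect exhibiting it to be the genuinely hard step --- entirely parallel to the way the analysis of $\V(2,n)$ had to leave the initial block to produce the third even term $2n^2+2$. A full treatment of $4\mid a$ would thus continue the indicator recurrence past $ab$ and display one number there with a unique representation, presumably through the same Pascal- and Sierpi\'nski-triangle machinery; short of that, the counting above proves the stated bound cleanly for $a\equiv 2\pmod 4$ and yields $1+\lceil a/4\rceil$ in general.
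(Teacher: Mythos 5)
You have, in fact, reproduced the paper's entire argument: the paper offers no separate proof of this corollary, reading it off directly from the preceding corollary's list of even terms below $ab$, namely $a$, $2b$, and $nb+a$ for $n\in 4\ZZ_{\geqslant 0}\cap[0,a)$. Your careful count of that list is correct --- $2b$ together with the $\lceil a/4\rceil$ terms $nb+a$ (of which the $n=0$ term is $a$ itself) gives $1+\lceil a/4\rceil$ distinct even terms --- and so is your observation that this equals the claimed $2+\lfloor a/4\rfloor$ exactly when $a\equiv 2\pmod 4$. What your accounting actually exposes is an off-by-one error in the paper: for $4\mid a$ the stated bound does not follow from the preceding corollary, evidently because $a$ was counted both as a standalone even term and, implicitly, as the $n=0$ member of the family $nb+a$.

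Your diagnosis of where the missing term would have to come from is also correct, and it shows the gap cannot be closed from anything else in the paper: for $4\mid a$ an additional even term must lie at or beyond $ab$, outside the window controlled by the Freiman correspondence with $\W$, and the paper only \emph{conjectures} such terms (Conjecture \ref{even terms of regular modified Ulam sequences} lists, e.g., $4b+4$ for $\V(4,b)$ and $8b+8$ for $\V(8,b)$, both at or beyond $ab$); no proof of membership of any even term $\geqslant ab$ is given anywhere in the paper, so there is no Pascal--Sierpinski continuation you could have cited. In short, your proposal proves exactly as much as the paper does: the unconditional bound $1+\lceil a/4\rceil$, hence the corollary as stated for $a\equiv 2\pmod 4$, while the case $4\mid a$ of the stated bound remains unproven in the paper as well.
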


These are however not all the even terms that $\V(a,b)$ can have. The following conjecture, based entirely on numerical data, enumerates the even terms of some modified Ulam sequences.

\begin{conjecture}\label{even terms of regular modified Ulam sequences}
For the following values of $a,b$, where $a,b$ are relatively prime, these are all the even terms of $\V(a,b)$:
\begin{itemize}
\item $\V(4,b)$, $b>8$ odd: $4, 2 b, 4 b + 4, 12 b - 4$
\item $\V(6,b)$, $b>12$ odd: $6, 2 b, 4b+6$ 
\item $\V(8,b)$, $b>16$ odd: $8, 2 b, 4 b + 8, 8 b + 8, 24 b - 8$
\item $\V(10,b)$, $b>20$ odd: $10, 2 b, 4 b + 10, 8 b + 10$
\item $\V(12,b)$, $b>24$ odd: $12, 2 b, 4 b + 12, 8 b + 12, 12 b + 12, 20 b + 12, 28 b - 12, 
38 b + 48$
\item $\V(14,b)$, $b>28$ odd: $14, 2 b, 4 b + 14, 8 b + 14, 12 b + 14, 22 b + 14, 30 b - 14$
\item $\V(16,b)$, $b>32$ odd: $16, 2 b, 4 b + 16, 8 b + 16, 12 b + 16, 16 b + 16, 24 b + 16, 
32 b + 16, 44 b - 16, 44 b + 16$
\end{itemize}
\end{conjecture}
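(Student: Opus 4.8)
The plan is to extend the two-stage picture that already governs $\V\cap[0,ab]$ — the Freiman model of Lemma \ref{asymetric Ulam} for the head of the sequence, together with a representation-counting recurrence for the tail — into a single $b$-uniform computation, and then to close the list of even terms using Finch's theorem (Theorem \ref{Finch's criterion}). First I would record, from the structural equivalence of $\V\cap f^{-1}(I)$ with $\W\cap I$, the exact form of $\V(a,b)$ on $[0,ab]$. This already produces the ``generic'' even terms $a$, $2b$ and $nb+a$ for $n\in 4\ZZ_{\geqslant 0}\cap[0,a)$, and it pins down the odd terms in the last few $b$-blocks before $ab$. This is the base case; everything interesting happens past $ab$, precisely where the box $I$ (and hence the $\W$-model) stops describing $\V$.

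For the tail I would reorganise $\V$ by $b$-blocks: write each integer as $jb+i$ with $0\leqslant i<b$ and regard the $j$-th block as a profile $P_j\in\FF_2^b$ recording which columns $i$ lie in $\V$. Since $a$ is even and $b$ is odd, an odd $u=jb+i$ (that is, $j$ odd) lies in $\V$ exactly when it has a unique representation $u=e+(u-e)$ with $e$ an even term of $\V$; equivalently, $u\in\V$ iff exactly one even term $e$ satisfies $u-e\in\V$. This is the direct analogue of Lemma \ref{(2,n) representations}, and it lets one compute each new block from the earlier ones by a fixed, cellular-automaton-type rule once the (still unknown) set of even terms is fixed. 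The strategy is to run this rule block by block, treating $b$ as a formal parameter, to locate each sporadic even term as the unique even column/row where an even $u$ acquires exactly one representation, and to show that after finitely many blocks the profile sequence $(P_j)$ becomes periodic in $j$ with a threshold independent of $b$; this is where $b>2a$ is used, guaranteeing the blocks are long enough that carries between adjacent blocks never interact in more than one way. Finiteness of the even set then lets Theorem \ref{Finch's criterion} upgrade eventual periodicity to genuine regularity and, crucially, certify that no further even term appears beyond the listed ones.

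The main obstacle is twofold. The clean $\FF_2$ polynomial formalism that made the $\V(2,n)$ analysis of Theorem \ref{(2,n) theorem} tractable relied on there being only two even summands, so that the true ``exactly one representation'' condition coincided with a mod $2$ count; with $\lfloor a/4\rfloor+2$ or more even terms available the multiplicity of representations can exceed $2$, the parity shortcut fails, and one must track exact representation counts rather than their residues mod $2$. This is exactly what generates the delicate sporadic terms — the ``$cb-a$'' entries ($12b-4$, $24b-8$, $28b-12$, $30b-14$, $44b-16$) and the off-pattern entries ($38b+48$, $44b+16$) — which arise from near-coincidences between even$+$even and odd$+$odd representations rather than from the uniform block rule, and which I expect to resist any single closed-form argument. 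Consequently the realistic route is not a uniform proof for all even $a$ at once but a finite, $b$-parametrised symbolic computation carried out separately for each $a\in\{4,6,8,10,12,14,16\}$: establish the periodic tail uniformly in $b$, enumerate the finitely many blocks of the transient, and verify in each that the only even columns acquiring a unique representation are the conjectured ones. The hardest single point will be proving the $b$-uniform stabilisation rigorously — that the transient length and the eventual period are bounded independently of $b$ for $b>2a$ — since this is what reduces an infinite family of claims to a finite check.
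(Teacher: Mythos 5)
You should first be aware that the paper contains no proof of this statement: it is presented explicitly as a conjecture ``based entirely on numerical data,'' so there is no argument of the author's to compare yours against. Judged on its own terms, what you have written is a research programme rather than a proof, and it has both an acknowledged gap and two unacknowledged ones. The acknowledged one you name yourself: the $b$-uniform stabilisation of the block profiles (transient length and period bounded independently of $b$) is the entire content of the problem, and the only justification you offer --- that $b>2a$ makes the blocks ``long enough that carries between adjacent blocks never interact in more than one way'' --- is an assertion, not a mechanism. Until that is supplied, your ``finite, $b$-parametrised symbolic computation'' is not finite: it is one computation per value of $b$.

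The unacknowledged gaps are more serious. First, you invoke Theorem \ref{Finch's criterion} backwards: Finch's theorem derives regularity \emph{from} the finiteness of the set of even terms; it cannot ``certify that no further even term appears beyond the listed ones.'' Completeness of the even list is an input to Finch, never an output. Second, your cellular-automaton framing only governs the odd terms. For odd $u$ the exact-count analogue of Lemma \ref{(2,n) representations} does give a finite-range recurrence, since every representation of an odd number is even$+$odd and the putative even terms are finitely many and bounded by $O(b)$. But verifying the conjecture means showing every even $u$ beyond the list has zero or at least two representations, and an even number is a sum of two \emph{odd} terms drawn from the whole sequence --- a global convolution, not a fixed-window rule. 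This is precisely what the $\FF_2$-polynomial machinery of the $\V(2,n)$ proof was built to control, and, as you correctly observe, that machinery is unavailable once more than two even summands exist; but you then propose to ``run the rule block by block'' anyway, which cannot detect or exclude the sporadic terms ($12b-4$, $38b+48$, $44b+16$, etc.) that you yourself attribute to odd$+$odd coincidences. Eventual periodicity of the odd part would make the odd$+$odd representation count of large even $u$ grow, leaving only a bounded range to check --- but that bound depends on $b$, returning you to the unproved uniformity. So the proposal, while sensibly structured, leaves the conjecture exactly as open as the paper does.
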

% We could enumerate the conjectured even terms of $\V(a,b)$ for $a$ even, $b>2 a$ odd and relatively prime to $a$ for other values of $a$ as well; it is also easy to prove that $\V(a,b)$ would contain these numbers. The hard part is to show that there are no more even terms after the conjectured ones. Using techniques similar to those in ... (OUR PAPER), one could prove that given specific values of $a,b$, the sequence $\V(a,b)$ has finitely many even terms. If a rigidity conjecture similar to one for Ulam sequences holds, then one could hope to use it to generalize such a result to $\V(a,c)$ for all $c=b\; \rm{mod}\; L$, where $L$ is a multiple of $a$. 
 
Note that the even terms of $\V(a,b)$ for $a>2$ are conjectured to be linear in $b$. This is strikingly different from the case of $\V(2,b)$; for $\V(2,b)$ s.t. $b-1$ is a power of $2$, $\V(2,b)$ has a third even term which is quadratic in $b$. For Ulam sequences, a similar phenomenon is only conjectured for $\U(4,b)$ where $b+1$ is a power of 2: in this case, $\U(4,b)$ is conjectured to have 4 even terms: $4, 2b+4, 4b+4, 4b^2+2b-4$ \cite{finch_1992_1, cassaigne_finch_1995}.

\section{Freiman homomorphisms between Ulam sets}

%The method of finding a Freiman homomorphism between a sequence and a two-dimensional set can be applied to other cases as well. Using the same argument, one shows that for $a,b$ relatively prime s.t. $b<2a$, $\V(a,b)\cap[0,ab]$ is structurally equivalent to $\V((1,0),(0,1))\cap([0,b)\times [0,a))$ by the map $f: a\mapsto (1,0), b\mapsto (0,1)$. 

We can extend the technique in the previous section to the Ulam sets. For the entirety of this section, let $a<b$ be relatively prime, and let $\U:=\U(a,b)$ and $\W:=\U((1,0),(0,1))$ be the Ulam set generated by $(1,0), (0,1)$. Recall the structure of $\W$ from Kravitz and Steinerberger \cite{kravitz_steinerberger_2017}, presented in Figure \ref{U1001}:
\begin{lemma}\label{Structure of U((1,0),(0,1))}
$\W$ consists of the following vectors:
\begin{itemize}
\item $(n,1)$, $n\in\ZZ_{\geqslant 0}$
\item $(1,m)$, $m\in\ZZ_{\geqslant 0}$
\item $(n,m)$, $n,m \geqslant 3$ odd
\end{itemize}
\end{lemma}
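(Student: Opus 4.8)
The plan is to prove $\W = \mathcal{S}$, where $\mathcal{S}$ denotes the union of the three families in the statement, by induction along the total order $\prec$ in which the Ulam construction adds vectors. I will use that $\prec$ refines the coordinate-sum order, so that whenever $z = x+y$ with $x,y \in \ZZ_{\geqslant 0}^2$ both nonzero, we have $x,y \prec z$; this is the standard setup under which membership in an Ulam set is local, i.e. $z \in \W$ iff $z$ has exactly one representation $z = x+y$ as a sum of two \emph{distinct} elements of $\W \cap \{w : w \prec z\}$. The inductive hypothesis is that $\W$ and $\mathcal{S}$ agree strictly below $z$, and I verify the membership criterion against $\mathcal{S}$ in both directions. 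A useful preliminary observation is that \emph{no} element of $\mathcal{S}$ has two even coordinates (type $(n,1)$ and $(1,m)$ carry an odd coordinate $1$, and the odd-odd family carries two odd coordinates).

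For the positive direction, every non-generator $z \in \mathcal{S}$ should have a unique representation with both summands in $\mathcal{S}$. I would exhibit it family by family: $(n,1) = (n-1,1) + (1,0)$, $(1,m) = (1,m-1) + (0,1)$, and for odd $n,m \geqslant 3$, $(n,m) = (n-1,1) + (1,m-1)$. Uniqueness is short coordinate bookkeeping. Writing $z=(a,b)+(c,d)$, for $(n,1)$ the second coordinates must split as $0+1$, and the only $\mathcal{S}$-vector with vanishing second coordinate is $(1,0)$, which forces the stated pair; $(1,m)$ is symmetric. For odd-odd $z$, a type-C summand $(p,q)$ (both odd) would force the companion $(n-p,m-q)$ to have two even coordinates, which is impossible by the preliminary observation; the shapes $(k,1)+(k',1)$ and $(1,k)+(1,k')$ give second/first coordinate $2$; so the only surviving shape is $(a,1)+(1,d)$, which forces $a=n-1,\, d=m-1$.

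For the negative direction, every $z \notin \mathcal{S}$ should have $0$ or $\geqslant 2$ representations, and I split on coordinates. If $z=(n,0)$ with $n\neq 1$ (or symmetrically $(0,m)$, $m\neq 1$), the only admissible summand is $(1,0)$ (resp. $(0,1)$), so there are no two distinct summands and $r(z)=0$. If $z=(n,2)$ with $n\geqslant 2$, the pairs $(a,1)+(c,1)$ with $a+c=n$, $a\neq c$ already give two distinct representations once $n\geqslant 3$, while $(2,2)=(0,1)+(2,1)=(1,0)+(1,2)$; the line $n=2$ is symmetric. In the remaining range $n,m\geqslant 3$ with not both odd, symmetry of $\mathcal{S}$ lets me assume $m$ even, hence $m\geqslant 4$; then $z$ admits the two distinct representations $(n-1,1)+(1,m-1)$ and $(n-3,1)+(3,m-1)$, the first of A$+$B type and the second of A$+$C type with $(3,m-1)$ genuinely odd-odd, so $r(z)\geqslant 2$.

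The main obstacle is organizational rather than conceptual: I must check that the case split $\{$coordinate $=0,\,=1,\,=2,\,\geqslant 3\}$ on each axis is exhaustive, that the two representations produced in the negative direction are always legal (summands in $\mathcal{S}$ and distinct) and genuinely different even at the boundary instances $n-3=0$ and $m-1=3$, and that no stray A$+$C, B$+$C, or C$+$C representation spoils uniqueness in the positive direction---which is exactly the parity remark that an odd total forbids an odd-odd summand. Once these routine verifications are in place the induction closes and $\W=\mathcal{S}$. Alternatively, one may simply cite \cite{kravitz_steinerberger_2017}, where this description of $\W$ is established.
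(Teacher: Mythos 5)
Your proof is correct, but note that the paper itself does not prove this lemma at all: it is stated as a recalled result attributed to Kravitz and Steinerberger \cite{kravitz_steinerberger_2017} and accompanied only by a figure --- precisely the fallback you offer in your final sentence. So your argument is genuinely different in the sense that it is the only actual proof on the table. I checked its load-bearing steps and they hold: the parity observation (no element of $\mathcal{S}$ has two even coordinates) correctly eliminates every representation of an odd-odd target that uses an odd-odd summand, leaving only the forced shape $(n-1,1)+(1,m-1)$; the case split on coordinate values $\{0,1,2,\geqslant 3\}$ is exhaustive once one notes that $(0,2)$ and $(2,0)$ fall under the axis case while $(1,2)$ and $(2,1)$ lie in $\mathcal{S}$; and the two representations exhibited in the negative direction for $(n,m)$ with $n\geqslant 3$, $m\geqslant 4$ even, namely $(n-1,1)+(1,m-1)$ and $(n-3,1)+(3,m-1)$, are legal and give distinct unordered pairs even at the boundary $n=3$ (since $(0,1)\in\mathcal{S}$) and $m=4$. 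The induction is well-founded because both summands in any representation are nonzero, hence of strictly smaller coordinate sum, which justifies the locality claim you start from. What your route buys is self-containedness: the structural-equivalence arguments of this section of the paper all rest on this lemma, so a short direct proof makes the section independent of \cite{kravitz_steinerberger_2017}. What the citation buys is brevity, which is presumably why the author chose it.
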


	\begin{figure}[h!]
	\includegraphics[height = 0.2\textheight]{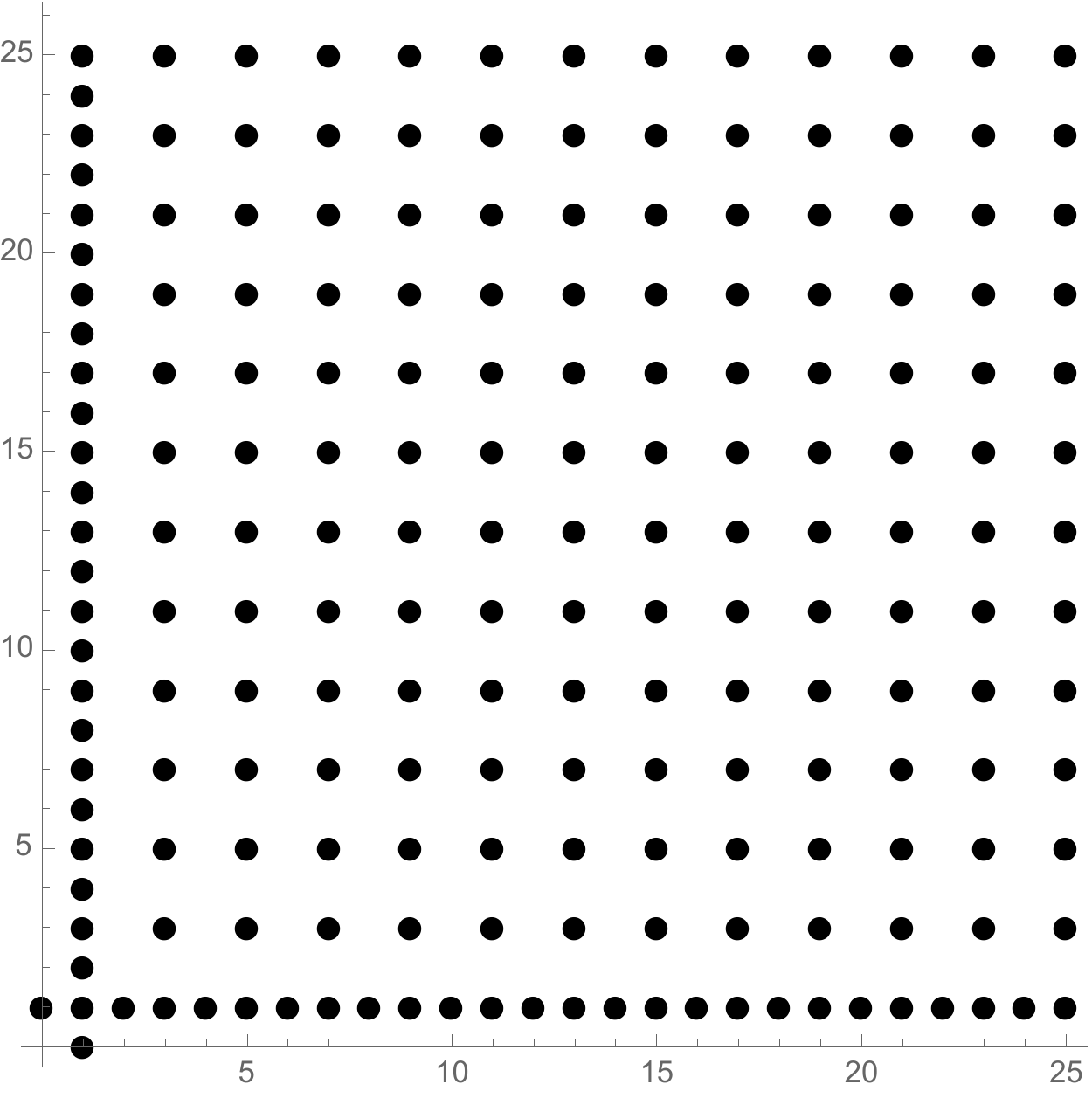}
	
	\caption{$\U((1,0),(0,1))$}
	\label{U1001}
	\end{figure}

By finding a connection between $\U$ and $\W$, we will prove the following result about arithmetic progressions of certain common differences in Ulam sequences:

\begin{theorem} \label{arithmetic progressions in Ulam sequences}
We have the following:

\begin{itemize}
\item for any $c>(b+1)a$, $\{c, c+a, ..., c+ba\}\subset \U\implies c+(b+1)a\notin \U$
\item for any $c>b(a+1)$, $\{c, c+b, ..., c+ab\}\subset \U\implies c+(a+1)b\notin \U$
\end{itemize}

In other words, no $b+2$ Ulam numbers are in an arithmetic progression of common difference $a$, and no $a+2$ Ulam numbers are in an arithmetic progression of common difference $b$.
\end{theorem}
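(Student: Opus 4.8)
The plan is to reduce each bullet to a single statement that does not involve $c$ at all: that $(b+1)a\in\U$ for the first bullet and $(a+1)b\in\U$ for the second. Granting $(b+1)a\in\U$, I would argue as follows. Suppose $c>(b+1)a$ and $\{c,c+a,\dots,c+ba\}\subset\U$. Then $c+(b+1)a$ has the representation $(c+ba)+a$, with distinct summands in $\U$ since $c+ba>c>(b+1)a>a$; it also has the representation $c+\bigl((b+1)a\bigr)$, whose summands lie in $\U$ (by hypothesis and by the reduced fact), are distinct because $c>(b+1)a$, and are both smaller than $c+(b+1)a$. The unordered pairs $\{c+ba,a\}$ and $\{c,(b+1)a\}$ differ, since $c+ba\neq c$ and $c+ba\neq(b+1)a$ (else $c=a$); hence $c+(b+1)a$ has two representations and is not in $\U$. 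The second bullet is the mirror image: granting $(a+1)b\in\U$, the element $c+(a+1)b$ carries the two distinct representations $(c+ab)+b$ and $c+\bigl((a+1)b\bigr)$, the hypothesis $c>b(a+1)$ ensuring the summands are distinct and smaller. Thus the whole theorem follows once the Key Lemma $(b+1)a,(a+1)b\in\U$ is established, and this reduction is completely uniform in $c$.

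To prove the Key Lemma I would pass to the analogous Freiman homomorphism $f\colon\U\to\ZZ^2/(-b,a)$ with $f(a)=(1,0)$ and $f(b)=(0,1)$, and invoke the structure of $\W=\U((1,0),(0,1))$ recorded in Lemma \ref{Structure of U((1,0),(0,1))}. Reducing coordinates modulo $(-b,a)$ sends $(b+1)a$, whose coordinate is $(b+1,0)$, to $(1,a)$, and sends $(a+1)b$, whose coordinate is $(0,a+1)$, to $(b,1)$. Both points lie in $\W$: $(1,a)$ sits on the spoke $\{(1,m):m\geqslant0\}$ and $(b,1)$ on the spoke $\{(n,1):n\geqslant0\}$. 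Pulling the unique $\W$-decompositions $(1,a)=(0,1)+(1,a-1)$ and $(b,1)=(1,0)+(b-1,1)$ back through $f$ produces the candidate representations $(b+1)a=b+\bigl(a+(a-1)b\bigr)$ and $(a+1)b=a+\bigl(b+(b-1)a\bigr)$, so the correspondence between $\U$ and $\W$ predicts that both elements belong to $\U$.

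The hard part will be to make this transfer rigorous, namely to verify that these are the \emph{only} representations in $\U$. The structural correspondence between $\U$ and $\W$ is under control only on the initial fundamental box $I=\{0\leqslant x<b,\ 0\leqslant y<a\}$, whereas the points $(1,a)$ and $(b,1)$ sit just outside it; correspondingly $(b+1)a$ and $(a+1)b$ lie just beyond the range in which the generation order of $\W$ and the magnitude order of $\U$ are known to agree. I would therefore first extend the explicit membership description of $\U$ slightly past $ab$ by the same inductive bookkeeping that determines $\W$ (compare the omitted case analysis behind Lemma \ref{asymetric Ulam} and the structure in Lemma \ref{Structure of U((1,0),(0,1))}), and then perform a finite check that no Ulam number in the short interval between $ab$ and $(a+1)b$ supplies a competing representation. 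I expect this verification, rather than the reduction, to be the technically demanding step.
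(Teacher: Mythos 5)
Your reduction is exactly the paper's proof of the theorem proper: the paper also observes that $c+(b+1)a=(c+ba)+a=c+[(b+1)a]$ gives two distinct Ulam representations once $(b+1)a\in\U$ is known, and argues symmetrically for the second bullet. You have also correctly isolated the paper's key ingredient: its Lemma \ref{a(b+1), (a+1)b} asserts precisely that $a(b+1),(a+1)b\in\U$, with the same canonical representations $a(b+1)=b+[(a-1)b+a]$ and $b(a+1)=a+[(b-1)a+b]$ that you extract from the spokes of $\W$; existence of these representations comes, as in your sketch, from the structural equivalence (Lemma \ref{initial terms of Ulam sequence}).

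The genuine gap is that you never prove this Key Lemma: uniqueness of the two representations is its entire content, and your plan for it is both unexecuted and aimed at the wrong place. No extension of the structural description beyond $ab$ is needed, and the interval $(ab,(a+1)b)$ is a red herring: every element of $\U(a,b)$ is a nonnegative combination $ca+db$, so if a summand in a representation of $(b+1)a=ab+a$ exceeded $ab$, its partner would be smaller than $a$, which is impossible; and a summand of $(a+1)b=ab+b$ exceeding $ab$ forces its partner to equal $a$ (the only Ulam number below $b$), making the large summand $(b-1)a+b$, i.e.\ reproducing the canonical representation. The paper instead disposes of all competing representations arithmetically: write an arbitrary representation as $(ca+db)+(ea+fb)$ with $c,d,e,f\geqslant 0$, reduce modulo $a$ (resp.\ modulo $b$) to force $a\,|\,(d+f)$ (resp.\ $b\,|\,(c+e)$), and observe that each surviving case requires a summand of the form $ka$ with $2\leqslant k\leqslant b$ or $lb$ with $2\leqslant l\leqslant a$, which Lemma \ref{initial terms of Ulam on coordinate axes} excludes from $\U$. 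That modular argument together with the coordinate-axes lemma is the missing ingredient; as written, your proposal establishes the theorem only conditionally on the Key Lemma.
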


We first need to define an appropriate Freiman homomorphism.
\begin{definition}\label{Freiman homomorphism for Ulam}
\begin{align*}
f:\U &\to \mathbb{Z}^2/(-b,a) \\
a &\mapsto (1,0)+\mathbb{Z}(-b,a)\\
b &\mapsto (0,1)+\mathbb{Z}(-b,a)\\
n+m& \mapsto f(n)+f(m)+\mathbb{Z}(-b,a), n,m\in \V
\end{align*}
For simplicity, we write $(c,d):=(c,d)+\mathbb{Z}(-b,a)$.
\end{definition}
Note that this is the same definition as Definition \ref{Freiman homomorphism for modified Ulam}. We begin by stating several lemmas about $f$ which are proved the same way as similar lemmas in the previous section.

\begin{lemma}
f is a well-defined Freiman homomorphism of order 2.
\end{lemma}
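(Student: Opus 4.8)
The plan is to verify the Freiman homomorphism property exactly as in the previous section, since the map $f$ here is defined identically to Definition \ref{Freiman homomorphism for modified Ulam}. First I would take four elements $z_1,z_2,z_3,z_4\in\U$ with $f(z_i)=(x_i,y_i)$, meaning $z_i = ax_i + by_i$, and assume the additive relation $z_1+z_2=z_3+z_4$ holds in $\ZZ$. The goal is to show $f(z_1)+f(z_2)=f(z_3)+f(z_4)$ in $\ZZ^2/(-b,a)$.

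The key step is to extract the arithmetic consequence of the hypothesis. From $z_1+z_2-z_3-z_4=0$ we obtain
\begin{align*}
0 = a(x_1+x_2-x_3-x_4) + b(y_1+y_2-y_3-y_4).
\end{align*}
Since $\gcd(a,b)=1$, this forces $a \mid (y_1+y_2-y_3-y_4)$, so we may write $c := (y_1+y_2-y_3-y_4)/a \in \ZZ$. I would then compute the difference of the images directly in $\ZZ^2$ before quotienting:
\begin{align*}
f(z_1)+f(z_2)-f(z_3)-f(z_4) = \left(-\tfrac{b}{a}(y_1+y_2-y_3-y_4),\ y_1+y_2-y_3-y_4\right) = (-bc, ac) = c(-b,a).
\end{align*}
This is precisely a multiple of the vector generating the quotient, hence equals $0$ in $\ZZ^2/(-b,a)$, establishing that $f$ is a well-defined Freiman homomorphism of order $2$.

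I do not expect any genuine obstacle here, since the proof is formally identical to the corresponding lemma for $\V$-sequences already proved in the previous section: the argument used only that $a,b$ are relatively prime and never invoked the uniqueness-of-representation rule distinguishing $\U$ from $\V$, nor the distinctness condition in the Ulam definition. The one point requiring a line of care is justifying $a\mid(y_1+y_2-y_3-y_4)$ from coprimality of $a$ and $b$, but this is immediate. As the excerpt already remarks, these lemmas \textquotedblleft are proved the same way as similar lemmas in the previous section,\textquotedblright{} so I would simply transcribe the earlier computation verbatim with $\U$ in place of $\V$.
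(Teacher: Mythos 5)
Your proof is correct and coincides with the paper's: the paper itself gives no separate argument here, stating only that the lemma ``is proved the same way as similar lemmas in the previous section,'' and your transcription of that earlier computation (including the coprimality justification for $a \mid (y_1+y_2-y_3-y_4)$, which the paper leaves implicit) is exactly that proof with $\U$ in place of $\V$.
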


This implies in particular, that we can extend $f$ to $\U+\U$, where $f(n+m)=f(n)+f(m)$ for all $n,m\in \U$. From now on, $f$ refers to this extended map from $\U+\U$ to $\mathbb{Z}^2/(-b,a)$.

\begin{lemma}
$f|_\U$ is injective.
\end{lemma}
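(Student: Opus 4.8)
The plan is to mirror the injectivity proof from the previous section almost verbatim, since the Freiman homomorphism $f$ here is defined identically to Definition \ref{Freiman homomorphism for modified Ulam}. First I would take two elements $z_1, z_2 \in \U$ with $f(z_1) = f(z_2)$, and unwind what equality in the quotient $\mathbb{Z}^2/(-b,a)$ means: since $f(z_i) = (x_i, y_i)$ represents $z_i = a x_i + b y_i$, the equality of cosets says that $(x_1, y_1) - (x_2, y_2) \in \mathbb{Z}(-b,a)$, i.e. there exists $n \in \mathbb{Z}$ with $x_1 - x_2 = -nb$ and $y_1 - y_2 = na$.

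Next I would simply substitute these relations into the difference $z_1 - z_2$ and compute. We have
\begin{align*}
z_1 - z_2 &= a(x_1 - x_2) + b(y_1 - y_2) \\
&= a(-nb) + b(na) = 0.
\end{align*}
Hence $z_1 = z_2$, and $f|_\U$ is injective. This is exactly the computation carried out for the $\V$-sequence case, and nothing about the proof used the fact that $\V$ allowed non-distinct summands; the only inputs are that each element of $\U$ has a well-defined pair of coordinates under $f$ (which follows from $f$ being a well-defined homomorphism, already established) and that $(a,b)$ are the fixed generators encoded by $(1,0)$ and $(0,1)$.

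There is essentially no obstacle here: the statement is a formal consequence of the structure of the target group $\mathbb{Z}^2/(-b,a)$ and the linear relation $z = ax + by$. The only point requiring minor care is making sure that $f(z_1) = f(z_2)$ is interpreted as equality of cosets rather than equality of chosen representatives, so that the integer $n$ appears correctly; once that is written down the arithmetic is immediate. I would therefore present this as a short proof identical in form to the earlier injectivity lemma, and note that it carries over without change since $f$ is defined by the same formula.
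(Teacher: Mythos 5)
Your proof is correct and is exactly what the paper intends: the paper gives no separate argument for this lemma, stating only that it "is proved the same way as similar lemmas in the previous section," and your transplanted computation (interpreting $f(z_1)=f(z_2)$ as equality of cosets, writing $x_1-x_2=-nb$, $y_1-y_2=na$, and concluding $z_1-z_2=a(-nb)+b(na)=0$) is precisely that proof, carried over verbatim from the $\V$-sequence case.
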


Consider the box $I=\{(i,j):0\leqslant i<b, 0\leqslant j<a\}\subset\mathbb{Z}^2$. Like in the previous section, there is a structural equivalence between $\U\cap f^{-1}(I)$  and $\W\cap I$, and between $(\U+\U)\cap f^{-1}(I)$ and $(\W+\W)\cap I$.   

\begin{lemma}\label{initial terms of Ulam on coordinate axes}
If $a\neq 1$, $2\leqslant k\leqslant b$, $2\leqslant l\leqslant a$, then $ka, lb\notin \U$.
\end{lemma}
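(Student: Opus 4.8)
The plan is to transport the question into the lattice model $\W = \U((1,0),(0,1))$ through the Freiman homomorphism $f$ of Definition \ref{Freiman homomorphism for Ulam} and the structural equivalence between $\U \cap f^{-1}(I)$ and $\W \cap I$, and then to read off the answer from the explicit description of $\W$ in Lemma \ref{Structure of U((1,0),(0,1))}. The recovery map $g(x,y) := ax + by$ descends to $\mathbb{Z}^2/(-b,a)$ since $g(-b,a)=0$, and it satisfies $g \circ f = \mathrm{id}$ on $\U$; consequently every Ulam number equals $a x_w + b y_w$ for its unique box representative $(x_w,y_w)\in I$. In particular, for $0 \leqslant k < b$ the integer $ka$ has box representative $(k,0)$, and for $0 \leqslant l < a$ the integer $lb$ has box representative $(0,l)$.

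First I would dispose of the ranges $2 \leqslant k \leqslant b-1$ and $2 \leqslant l \leqslant a-1$. Here $(k,0)$ and $(0,l)$ lie in $I$, so the structural equivalence gives $ka \in \U \iff (k,0) \in \W$ and $lb \in \U \iff (0,l) \in \W$. Inspecting Lemma \ref{Structure of U((1,0),(0,1))}, the only point of $\W$ on the axis $\{y=0\}$ is $(1,0)$ (the family $(1,m)$ at $m=0$), and the only point on $\{x=0\}$ is $(0,1)$ (the family $(n,1)$ at $n=0$). Hence $(k,0)\notin\W$ for $k \geqslant 2$ and $(0,l)\notin\W$ for $l \geqslant 2$, so $ka,lb \notin \U$ throughout these ranges. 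The hypothesis $a \neq 1$ is exactly what guarantees the box is at least two rows tall, so that this axis reading is meaningful.

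It remains to treat the corner value $ab$, which is simultaneously the case $k=b$ and $l=a$, and this is the main obstacle: since $ab \equiv (b,0) \equiv (0,a)$ has no representative inside $I$, the structural equivalence does not apply directly. Instead I would show $ab$ has no admissible representation. Suppose $ab = u+v$ with $u,v\in\U$ distinct; writing $u = ax_1+by_1$ and $v = ax_2+by_2$ in box coordinates and reducing $a(x_1+x_2)+b(y_1+y_2)=ab$ modulo $b$ (using $\gcd(a,b)=1$) forces $x_1+x_2 \in \{0,b\}$. In the case $x_1+x_2=b$ both $y_i=0$, so $u,v$ are multiples of $a$ with $1\leqslant x_i \leqslant b-1$; by the established range each $x_i=1$, giving $b=2$, which contradicts $b>a\geqslant 2$. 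In the case $x_1+x_2=0$ both $x_i=0$ and $y_1+y_2=a$ with $1\leqslant y_i\leqslant a-1$; the established range forces each $y_i=1$, whence $a=2$ and $u=v=b$, contradicting distinctness. Thus $ab\notin\U$, closing both boundary cases.

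The step demanding the most care is this corner analysis: one must justify that the two summands $u,v$ of a hypothetical representation of $ab$ genuinely lie in $f^{-1}(I)$, so that their box coordinates are nonnegative and bounded by $(b-1,a-1)$, and must verify that the reduction modulo $(-b,a)$ leaves no stray solutions outside $\{0,b\}$. Everything else reduces to a direct lookup against the three families listed in Lemma \ref{Structure of U((1,0),(0,1))}.
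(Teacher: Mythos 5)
Your proposal is correct and takes essentially the same route as the paper: the ranges $2\leqslant k<b$, $2\leqslant l<a$ are read off from $\W$ via the structural equivalence, and the corner value $ab$ is excluded by reducing a hypothetical representation modulo $b$ and invoking the already-established axis cases. One caution: your blanket assertion that \emph{every} Ulam number has a box representative in $I$ is false (any element of $\U$ exceeding $2ab-a-b$ has none), though this is harmless since you only use it for quantities below $ab$; in fact your bounded-coordinate case split $x_1+x_2\in\{0,b\}$ is slightly more complete than the paper's own corner argument, which treats only $c+e=b$ and $c+e\geqslant 2b$ and passes over the case where both summands are multiples of $b$.
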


\begin{proof}
$(1,0)\in \W$ but $(k,0)\notin \W$ for $2\leqslant k<b$. By the structural equivalence
of $\U\cap f^{-1}(I)$ and $\W\cap I$, we have that $ka\notin \U$ for $2\leqslant k<b$. Likewise, $(0,1)\in\W$ but $(0,l)\notin \W$ for $2\leqslant l<a$, and the structural equivalence implies that $lb\notin \U$ for $2\leqslant l<a$. It remains to show that $ba\notin \U$. More precisely, we will show that $ba$ has no Ulam representation. Suppose $ba$ has an Ulam representation $ba=(ca+db)+(ea+fb)$. Taking this mod $b$, we get that $ba\equiv(c+e)a$, hence $b|(c+e-b)$. If $c+e=b$, then $ba=(b-c)a+ca$, but at least one of $(b-c)a, ca$ is not in $\U$ by the previous argument. Hence $c+e\geqslant b+b$, and so $(ca+db)+(ea+fb)\geqslant(b+b)a$ which is a contradiction. Hence $ba$ has no Ulam representation, and is not in $\U$.
\end{proof}
\begin{lemma}\label{initial terms of Ulam sequence}
If $0<i<a$, $0<j<b$, then $ja+ib\in \U$ if and only if $j=1$ or $i=1$ or both $i,j$ odd. 
\end{lemma}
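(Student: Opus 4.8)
The plan is to transport the membership question from $\U$ to the two-dimensional set $\W$ through the Freiman homomorphism $f$ set up above. Since $f$ is additive with $f(a)=(1,0)$ and $f(b)=(0,1)$, the number $z=ja+ib$ satisfies $f(z)=j(1,0)+i(0,1)=(j,i)+\ZZ(-b,a)$. The hypotheses $0<j<b$ and $0<i<a$ guarantee that the representative $(j,i)$ already lies inside the box $I=\{(s,t):0\leqslant s<b,\ 0\leqslant t<a\}$, so $(j,i)$ is the canonical representative of the coset $f(z)$ in $I$.

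With this identification in hand, I would invoke the structural equivalence between $\U\cap f^{-1}(I)$ and $\W\cap I$ recorded above: an element of $I$ is the image of an element of $\U$ precisely when the corresponding point lies in $\W$. Concretely, $ja+ib\in\U$ if and only if $(j,i)\in\W$. This is the only place the machinery of the Freiman homomorphism is used; everything else is bookkeeping against the known description of $\W$.

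Finally I would read off the answer from the structure of $\W$ given in Lemma \ref{Structure of U((1,0),(0,1))}. There $(j,i)\in\W$ exactly when $i=1$ (the family $(n,1)$), or $j=1$ (the family $(1,m)$), or both $j$ and $i$ are odd and at least $3$. Under the standing constraints $0<i<a$ and $0<j<b$, the disjunction ``$i=1$, or $j=1$, or both $j,i\geqslant 3$ odd'' is equivalent to ``$j=1$, or $i=1$, or both $i,j$ odd'': if $i,j$ are both odd then either one of them equals $1$, which is already covered by the first two cases, or both are at least $3$. This yields the claimed characterization.

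The proof carries no serious obstacle; the only point demanding care is the orientation of the two coordinates under $f$ and the verification that $(j,i)$ is the representative of $f(z)$ lying in $I$, since swapping the roles of $a$ and $b$ would transpose the asymmetric conditions (the $i=1$ versus $j=1$ families) and produce the wrong statement. Matching the $\W$-description to the stated ``both odd'' condition is then an elementary reconciliation.
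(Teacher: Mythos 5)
Your proposal is correct and is essentially the paper's own argument: map $ja+ib$ to $(j,i)$ under the Freiman homomorphism, use the structural equivalence of $\U\cap f^{-1}(I)$ with $\W\cap I$, and read the answer off the known description of $\W$. The paper states this in two sentences; your version just spells out the coordinate bookkeeping (and the harmless reconciliation of ``both odd'' with ``both odd and $\geqslant 3$'') more explicitly.
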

\begin{proof}
This is a direct consequence of the structural equivalence of $\U\cap f^{-1}(I)$ and $\W\cap I$. Since the only elements $(i,j)$ of $\W$ are $(1,0), (0,1), (1,j), (i,1), (2k+1, 2l+1)$, we get our result. 
\end{proof}

\begin{lemma} \label{a(b+1), (a+1)b}
$a(b+1), (a+1)b\in \U$.
\end{lemma}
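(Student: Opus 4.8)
The plan is to use the standard characterization that an integer $N$ lies in $\U$ if and only if it has a \emph{unique} representation $N=u+v$ as a sum of two distinct smaller Ulam numbers (both summands are automatically less than $N$, and all Ulam numbers below $N$ are already determined). I would first exhibit an explicit representation for each target. For $a(b+1)=ab+a$, write $a(b+1)=b+(ab+a-b)$; here $ab+a-b$ corresponds under $f$ to the box point $(1,a-1)$, which lies in $\W$ by Lemma \ref{Structure of U((1,0),(0,1))} (the family $(1,m)$), so by the structural equivalence $ab+a-b\in\U$, and it is distinct from $b\leftrightarrow(0,1)$. For $(a+1)b=ab+b$, write $(a+1)b=a+(ab+b-a)$; the summand $ab+b-a$ corresponds to the point $(b-1,1)\in\W$ (the family $(n,1)$), whose value $(b-1)a+b$ still lies in the range controlled by the box $I$, so again $ab+b-a\in\U$ and is distinct from $a\leftrightarrow(1,0)$. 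This establishes existence of a representation in both cases.

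The heart of the argument is uniqueness, which I would carry out by transporting a hypothetical second representation into $\W$ via the Freiman homomorphism $f$ of Definition \ref{Freiman homomorphism for Ulam}. Suppose $a(b+1)=u+v$ with $u<v$ distinct Ulam numbers. Using Lemma \ref{initial terms of Ulam on coordinate axes} (in particular $ab\notin\U$) and the fact that $a=\min\U$, I would first rule out a summand that is too large, confining both $u,v$ to the region where their images are box representatives. Then $f(u)+f(v)\equiv(1,a)\pmod{(-b,a)}$, and since box points have coordinates in $[0,b)\times[0,a)$, the sum $f(u)+f(v)$ must equal one of the finitely many coset representatives lying in $[0,2b)\times[0,2a)$, namely $(1,a)$ or $(1+b,0)$. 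Reading off Lemma \ref{Structure of U((1,0),(0,1))}, the only point of $\W$ with first coordinate $0$ is $(0,1)$ and the only one with second coordinate $0$ is $(1,0)$; this forces the decomposition $(1,a)=(0,1)+(1,a-1)$ and shows $(1+b,0)$ admits no decomposition into two distinct points of $\W$. Hence $\{u,v\}=\{b,\,ab+a-b\}$. The same scheme applies to $(a+1)b$, where $f(u)+f(v)\equiv(b,1)$, the admissible coset representatives are $(b,1)$ and $(0,a+1)$, and the structure of $\W$ again leaves only the decomposition $(b,1)=(1,0)+(b-1,1)$, giving $\{u,v\}=\{a,\,ab+b-a\}$.

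The step I expect to be the main obstacle is the bookkeeping that confines every possible summand to the $\W$-controlled region so that the clean coset enumeration is legitimate: unlike the $a(b+1)$ case, the representation for $(a+1)b$ involves a summand exceeding $ab$, so I cannot simply assert that both summands lie below $ab$. I would instead bound the summands by the target and verify that their values still fall within the range of box-representable integers (up to $2ab-a-b$), invoking the structural equivalence between $\U\cap f^{-1}(I)$ and $\W\cap I$ together with Lemma \ref{initial terms of Ulam sequence} to identify them; a few small cases (notably $a=1$, where the box degenerates) may need to be checked directly. Once this confinement is in place, the remaining verification is the finite, mechanical matching of coset representatives against the three families listed in Lemma \ref{Structure of U((1,0),(0,1))}.
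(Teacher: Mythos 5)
Your route is genuinely different in mechanism from the paper's: you push a hypothetical second representation through $f$ and enumerate which coset representatives in $[0,2b)\times[0,2a)$ can be sums of two distinct points of $\W\cap I$, whereas the paper writes any competing representation as $(ca+db)+(ea+fb)$ with nonnegative coefficients and extracts constraints by reducing mod $a$ and mod $b$, then kills the bad cases with Lemma \ref{initial terms of Ulam on coordinate axes}. Your coset enumeration itself is correct: the candidates are $(1,a)$ and $(1+b,0)$ (resp.\ $(b,1)$ and $(0,a+1)$), and Lemma \ref{Structure of U((1,0),(0,1))} forces exactly the decompositions you list. Your confinement step is also sound for $a(b+1)$: each summand is at most $ab$, $ab\notin\U$, and every element of $\U$ below $ab$ is box-representable (write it as a nonnegative combination of $a$ and $b$, reduce the $b$-coefficient mod $a$; the resulting $a$-coefficient is less than $b$ precisely because the number is less than $ab$).

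The gap is in the $(a+1)b$ case, and the fix you propose does not close it. You say you would ``verify that their values still fall within the range of box-representable integers (up to $2ab-a-b$)'', but the box-representable integers do not form an interval, so lying in that range proves nothing. Concretely, when $b>2a$ the number $ab+a$ is an Ulam number (it is exactly the first half of the lemma you are proving), it is at most $(a+1)b-a$, so it is a legitimate candidate summand, and it lies below $2ab-a-b$; yet it has no box representative, since its reduced form is $(b+1)a+0\cdot b$. Your enumeration of sums of two box points therefore never sees candidate pairs of the form $\{ab+ka,\ b-ka\}$ with $k\geqslant 1$. The repair is short but it is a separate idea: any non-box-representable element of $\U$ that is at most $(a+1)b-a$ must equal $ab+ka$ with $k\geqslant 1$, so its complementary summand $b-ka$ would be an element of $\U$ smaller than $b$; the only such element is $a$ (every term after the first two is a sum of two distinct earlier terms, hence at least $a+b$), forcing $b=(k+1)a$ and contradicting $\gcd(a,b)=1$ when $a\geqslant 2$, with $a=1$ checked directly as you note. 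The paper never meets this difficulty because its congruence bookkeeping treats box and non-box representations uniformly: a summand of the form $ka$ with $2\leqslant k\leqslant b$ is excluded outright by Lemma \ref{initial terms of Ulam on coordinate axes}.
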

\begin{proof}
Note first that $a(b+1)=b+[(a-1)b+a]$. We know that $(a-1)b+a\in \U$ by Lemma \ref{initial terms of Ulam sequence}, hence this is a valid Ulam representation of $a(b+1)$. Suppose there exists another Ulam representation $a(b+1)=(ca+db)+(ea+fb)$. Taking the equation mod $a$ and mod $b$, we get respectively that $0\equiv(d+f)q$ mod $a$ and $a\equiv(c+e)a$ mod $b$. This requires that $a|(d+f)$. If $d+f\geqslant a$, then it is necessary that $d+f=a$ and $ca+ea=a$. Then we have that $a(b+1)=(a+db)+(a-d)b$. By Lemma \ref{initial terms of Ulam on coordinate axes}, the only element of the form $(a-d)b$ is $b$, and so we get the aforementioned representation. If $d+f=0$, then $a(b+1)=ca+ea$. This means that at least one of $c,e$ (say, $c$) is in the interval $2\geqslant c\geqslant b$, and we know from Lemma \ref{initial terms of Ulam on coordinate axes}. that this will not be an Ulam number. Hence the only Ulam representation of $a(b+1)$ is $a(b+1)=b+[(a-1)b+a]$, and so $a(b+1)\in \U$. A similar argument shows that $b(a+1)\in \U$
\end{proof}
 
Now we are ready to prove Theorem \ref{arithmetic progressions in Ulam sequences}.

\begin{proof}[Proof of Theorem \ref{arithmetic progressions in Ulam sequences}]
Suppose that we have $c, c+a, c+2a, ..., c+ba\in \U$, $c>(b+1)a$. Then $c+(b+1)a$ has two Ulam representations: $c+(b+1)a=(c+ba)+a=c+[(b+1)a]$. Hence it is not in $\U$.
Similarly, suppose that we have $c, c+b, c+2b, ..., c+ab\in \U$, $c>(a+1)b$. Then $c+(a+1)b$ has two Ulam representations: $c+(a+1)b=(c+ab)+b=c+[(a+1)b]$. Hence it is not in $\U$, and the theorem is proved.
\end{proof}

Note that the argument of the last paragraph can be generalized: if $x, nx\in \U$ for some positive integer $n>1$, then $\U$ has no arithmetic progressions of common difference $x$ and size $n$ if the first element of the progression is greater than $nx$ because if $c, c+x, ..., c+(n-1)x$, then $c+nx$ has two Ulam representations. In particular, we can prove the following statement:
\begin{theorem}
Let $\U=\U(a,b)$, $gcd(a,b)=1$, $a\geqslant 3$. Then $\U$ contains no arithmetic progression of common difference $a+b$ and size $>3$.
\end{theorem}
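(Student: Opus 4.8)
The plan is to apply the two-representation observation stated immediately before the theorem with $x=a+b$ and $n=3$, so that the whole proof reduces to the two membership facts $a+b\in\U$ and $3(a+b)\in\U$. Granting these, suppose toward a contradiction that $c,\ c+(a+b),\ c+2(a+b),\ c+3(a+b)\in\U$ is a progression of length $4$. Its last term $c+3(a+b)$ then has the representation $\bigl(c+2(a+b)\bigr)+(a+b)$ and, since $c$ and $3(a+b)$ both lie in $\U$, also the representation $c+3(a+b)$. All four summands are smaller than $c+3(a+b)$, and the two unordered pairs $\{c+2(a+b),\,a+b\}$ and $\{c,\,3(a+b)\}$ coincide only when $c=a+b$. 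Hence for every starting value $c\notin\{a+b,\,3(a+b)\}$ the term $c+3(a+b)$ has two distinct Ulam representations and cannot belong to $\U$; this already forbids all length-$4$ progressions except those beginning at $c=a+b$ or $c=3(a+b)$.

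First I would verify the driving facts and dispose of the two exceptional starting values using the initial structure. Writing $k(a+b)=ka+kb$, the associated lattice point has both coordinates equal to $k$, so Lemma~\ref{initial terms of Ulam sequence} classifies $k(a+b)$ exactly when $k<a$: it yields $k(a+b)\in\U$ for the odd values $k\in\{1,3\}$ and $k(a+b)\notin\U$ for the even values $k\in\{2,4\}$. For $a\geqslant 5$ this simultaneously settles $a+b,\,2(a+b),\,3(a+b)$ and $4(a+b)$. The two exceptional progressions are then excluded outright, since a progression starting at $c=a+b$ would require $2(a+b)\in\U$ and one starting at $c=3(a+b)$ would require $4(a+b)\in\U$, both false. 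Together with the previous paragraph this shows that no progression of common difference $a+b$ has length $4$, and \emph{a fortiori} none has length greater than $3$.

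The hard part is the small cases $a\in\{3,4\}$, where the relevant multiples lie on or beyond the boundary of the fundamental box $I$ and are thus invisible to Lemma~\ref{initial terms of Ulam sequence}. When $a=3$ the point of $3(a+b)$ has a coordinate equal to $a$, while $4(a+b)$ reduces modulo $(-b,a)$ to a point $(n,1)$ with $n\geqslant b$, which belongs to $\W$ yet lies outside $I$, so the structural equivalence says nothing about it; one must therefore argue directly in $\U$. To obtain $3(a+b)\in\U$ for $a=3$ I would exhibit the representation $(a+2b)+(2a+b)$ — both summands lie in the box, hence in $\U$ — and rule out every competing splitting of $3a+3b$ from the explicit list of small Ulam numbers together with Lemma~\ref{initial terms of Ulam on coordinate axes}, the delicate point being to show that the axis-type summand $(b+2)a$ is not in $\U$. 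To obtain $4(a+b)\notin\U$ for $a\in\{3,4\}$ I would instead produce a second representation by hand; for instance $28=7+21=13+15$ in $\U(3,4)$, and I expect such a splitting can be written down uniformly from the initial terms. This controlled excursion just past the fundamental box is the only genuine obstacle; everything else is a routine application of the two-representation principle.
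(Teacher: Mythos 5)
You take the same route as the paper: everything reduces, via the two-representation trick, to the membership facts $a+b\in\U$ and $3(a+b)\in\U$, the latter coming from the structural equivalence when $(3,3)$ lies in the box and from a bare-hands uniqueness argument when $a=3$; even your representation $(2a+b)+(a+2b)$ is the one the paper uses. The difference is that you are more scrupulous than the paper, on two points, and both points are real. First, the paper never discusses the exceptional starting values: its two representations of $c+3(a+b)$ coincide when $c=a+b$ and degenerate into a sum of two equal terms when $c=3(a+b)$, so strictly speaking the paper's argument says nothing about those two progressions, whereas you correctly reduce them to $2(a+b)\notin\U$ and $4(a+b)\notin\U$. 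Second, the ``delicate point'' you flag for $a=3$ is precisely a hole in the paper's own case analysis: in the branch where the competing splitting is a sum of two multiples of $a$ with $d=f=0$ and $c+e=b+3$, the paper excludes the summand $ca$, $c<b$, by Lemma \ref{initial terms of Ulam on coordinate axes}, which is vacuous when $c=1$; the splitting $\{a,\,(b+2)a\}$ is therefore never ruled out.

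What keeps your proposal from being complete is that both of your ``hard parts'' are left as expectations, and in your (more careful) formulation they are genuine obligations. Fortunately both close with lemmas already in the paper. For the delicate point: $(b+2)a=a(b+2)$ carries the two representations $a+a(b+1)$ and $(a+b)+(a+2b)$ --- the first is legitimate because $a(b+1)\in\U$ by Lemma \ref{a(b+1), (a+1)b}, the second because $(1,1)$ and $(1,2)$ are box points of $\W$ --- and the pairs are distinct, so $(b+2)a\notin\U$ and the uniqueness of $(2a+b)+(a+2b)$ follows. For the other: once $3(a+b)\in\U$ is known, write $4(a+b)=(a+b)+3(a+b)=(3a+b)+(a+3b)$; here $3a+b$ is the box point $(3,1)$, while $a+3b$ is the box point $(1,3)$ when $a=4$ and equals $a(b+1)\in\U$ (Lemma \ref{a(b+1), (a+1)b} again) when $a=3$. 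The two pairs are distinct, so $4(a+b)\notin\U$ for $a\in\{3,4\}$; this is exactly your $28=7+21=13+15$ written uniformly. With these two computations inserted, your argument is complete and, unlike the proof in the paper, covers every starting value $c$.
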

\begin{proof}
Note that for any values of $a,b$ we have $a+b\in \U$. It is enough to show that $3a+3b\in \U$ because if $c, c+(a+b), c+2(a+b), 3a+3b\in U$, then $c+3(a+b)=[c+2(a+b)]+(a+b)=c+(3a+3b)$ has two Ulam representations. For $a>3$, we have that $(3,3)\in \W$, and the structural equivalence implies that $3a+3b\in \U$.

We claim that for $a=3$, we still have $3a+3b\in U$. By structural equivalence and Lemma \ref{Structure of U((1,0),(0,1))}, we know that the initial terms of $\U$ include $a,b,a+b, 2a+b, a+2b$, but they do not include $2a, 2b, 2a+2b$. Clearly, $3a+3b$ has a representation $3a+3b=(2a+b)+(a+2b)$, and we claim that this is its only Ulam representation. Suppose it has another representation $3a+3b=(ca+db)+(ea+fb)$. Taking this mod $b$, we get that $3a=(c+e)a$, hence $b|c+e-3$. If $c+e=3$, then $d+f=3$. The only representation satisfying these conditions is $(2a+b)+(a+2b)$. If by contrast $c+e\geqslant b+3$, then $(ca+db)+(ea+fb)\geqslant (b+3)a+(d+f)b=ab+3a+(d+f)b$. This necessitates that $3=a+d+f$, hence $d=f=0$. then $3a+3b=ca+(b-c)a$. Since $b>3$, at least one of $c, b-c$ is less than $b$. Without the loss of generality, suppose $c<b$. Then $ca\notin \U$, hence it cannot be a summand of $3a+3b$, and so $3a+3b$ has no other Ulam representation in this case. Thus $3a+3b$ has exactly one Ulam representation, hence it is in $\U$, and so the theorem is proved.
\end{proof}
The method used in this proof can be further generalized:
\begin{theorem}
If $c,d$ are odd positive integers such that $1\leqslant c<\frac{a}{3}$, $1\leqslant d<\frac{b}{3}$, then there is no arithmetic progression of common difference $da+cb$ and size greater than $3$.
\end{theorem}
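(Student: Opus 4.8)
The plan is to reduce the statement to the general principle noted just before it: if $y$ and $3y$ both lie in $\U$, then $\U$ contains no arithmetic progression of common difference $y$ and size greater than $3$ (with first term exceeding $3y$), because a fourth term $c_0+3y$ would acquire the two distinct representations $c_0+(3y)$ and $(c_0+2y)+y$. Taking $y=da+cb$, it therefore suffices to prove the two membership facts $da+cb\in\U$ and $3da+3cb\in\U$.

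Both facts I would extract from Lemma \ref{initial terms of Ulam sequence}, which asserts that for $0<i<a$ and $0<j<b$ one has $ja+ib\in\U$ if and only if $j=1$, $i=1$, or $i$ and $j$ are both odd. For the first fact, read $da+cb$ as having coefficient $d$ on $a$ and $c$ on $b$; the hypotheses $1\leqslant d<b/3<b$ and $1\leqslant c<a/3<a$ place the coefficient pair in the admissible range, and since $c,d$ are odd the ``both odd'' clause yields $da+cb\in\U$. For the second fact, the essential observation is that \emph{tripling keeps the coefficients admissible}: $3d<b$ and $3c<a$ hold precisely because $d<b/3$ and $c<a/3$, while $3c$ and $3d$ remain odd as products of odd numbers, so the ``both odd'' clause again gives $3da+3cb\in\U$. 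This is exactly where all three hypotheses are consumed: the oddness of $c,d$ supplies the odd-coordinate condition both before and after tripling, and the bounds $c<a/3$, $d<b/3$ ensure that $3da+3cb$ is still an initial term controlled by the structural equivalence with $\W$ rather than one whose membership we cannot determine.

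With $y=da+cb$ and $3y=3da+3cb$ both in $\U$, I would finish exactly as in the preceding theorem on common difference $a+b$. A putative four-term progression $c_0,c_0+y,c_0+2y,c_0+3y\subset\U$ forces $c_0+3y$ to have the two representations $c_0+(3y)$ and $(c_0+2y)+y$; these use distinct summand pairs because $c_0>3y>y$, contradicting uniqueness, so no such progression exists. The one point requiring care is the customary hypothesis that the progression begin above $3y$; the finitely many progressions with smaller starting term can be excluded by direct inspection of the explicit initial structure of $\U$ given by the structural equivalence, just as in the $a+b$ case. I do not expect a genuine obstacle, since the whole content of the theorem is the admissibility check for the tripled coefficient pair $(3d,3c)$, which the hypotheses are precisely designed to pass.
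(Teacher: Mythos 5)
Your proposal matches the paper's proof: the paper likewise deduces $da+cb\in\U$ and $3da+3cb\in\U$ from Lemma \ref{initial terms of Ulam sequence} (both coefficient pairs remain odd and inside the box $0<i<a$, $0<j<b$ after tripling, which is exactly what the hypotheses $c<a/3$, $d<b/3$ guarantee) and then invokes the same two-representation argument used for the preceding theorems. If anything, your write-up is more careful than the paper's one-line proof, since you flag the requirement that the progression start above $3(da+cb)$, a hypothesis the paper leaves implicit.
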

\begin{proof}
Because of the assumptions made, $3da+3cb$ is in $\U$, and the theorem follows from the same argument as used in the previous theorems.
\end{proof}

The structural equivalence, together with Lemma \ref{Structure of U((1,0),(0,1))}, helps us deduce some of the even terms of Ulam sequences conjectured to be regular.
\begin{theorem}
Let $\U(a,b)$ be an Ulam sequence with $a,b$ relatively prime, and one of $a,b$ even. Then:
\begin{itemize}
\item if $a$ is even, then $\U(a,b)\cap[0,ab]\cap 2\ZZ_+=\{a, a+2b, a+4b, ...,a+(a-2)b\}$. Moreover, by Lemma \ref{a(b+1), (a+1)b}, $a+ab\in\U$.
\item if $b$ is even, then $\U(a,b)\cap[0,ab]\cap 2\ZZ_+=\{b, b+2a, b+4a,...,b+(b-2)a\}$. Moreover, by Lemma \ref{a(b+1), (a+1)b}, $b+ba\in\U$.
\end{itemize}
\end{theorem}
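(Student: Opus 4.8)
The plan is to read the even terms directly off the explicit description of $\W$ in Lemma~\ref{Structure of U((1,0),(0,1))}, using the structural equivalence between $\U\cap f^{-1}(I)$ and $\W\cap I$ to transport membership, and then to separate the three families of $\W$ by a single parity condition. The key point is that for $z\in\U$ the map $f$ records the representation $z=ja+ib$ with $(j,i)$ the unique box representative ($0\leqslant j<b$, $0\leqslant i<a$), so that every element of $\U\cap[0,ab)$ corresponds to a lattice point $(j,i)\in\W\cap I$ with $z=ja+ib$. Membership of the interior box points is then governed by Lemma~\ref{initial terms of Ulam sequence}, that of the axis points by Lemma~\ref{initial terms of Ulam on coordinate axes}, and the two generators correspond to $(1,0)\leftrightarrow a$ and $(0,1)\leftrightarrow b$. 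Since $ab\notin\U$ (again by Lemma~\ref{initial terms of Ulam on coordinate axes}), the sets $\U\cap[0,ab]$ and $\U\cap[0,ab)$ coincide, and the whole theorem reduces to deciding which box points are even.

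Suppose first that $a$ is even, so $b$ is odd and $z=ja+ib\equiv i\pmod 2$; thus $z$ is even exactly when the $b$-coefficient $i$ is even. Running through the families of $\W$, the points $(n,1)$ and the points $(n,m)$ with $n,m\geqslant 3$ odd all have odd $i$, so the only even contributions come from the points $(1,m)$ with $m$ even. For $0<m<a$ these lie in the interior and Lemma~\ref{initial terms of Ulam sequence} (with $j=1$) puts $a+mb$ in $\U$, while Lemma~\ref{initial terms of Ulam on coordinate axes} removes $ja$ and $ib$ for coefficients $\geqslant 2$, leaving only the generator $a$ at $m=0$. This produces exactly $\{a,a+2b,a+4b,\ldots,a+(a-2)b\}$, and the inequality $a<b$ guarantees that the largest term $a+(a-2)b=ab+a-2b$ is below $ab$, so the whole progression sits inside $[0,ab]$.

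The case $b$ even is symmetric after exchanging the roles of the coordinates: now $a$ is odd, $z\equiv j\pmod 2$, and the surviving family is $(n,1)$ with $n$ even, yielding the even terms $b+na$ (the value $n=0$ being the generator $b$). The one place demanding care---and the main obstacle to a fully precise statement---is the upper end of this progression: Lemma~\ref{initial terms of Ulam sequence} places every $b+na$ with $0<n<b$ in $\U$ irrespective of size, so one must intersect $\{b,b+2a,\ldots,b+(b-2)a\}$ with $[0,ab]$, and whether the top terms survive depends on the size of $b$ relative to $2a$. Finally, the ``moreover'' clauses follow at once from Lemma~\ref{a(b+1), (a+1)b}: $a+ab=a(b+1)\in\U$ is even because $a$ is, and $b+ab=b(a+1)\in\U$ is even because $b$ is, recording in each case the next even term immediately past $ab$.
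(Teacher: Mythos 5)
Your proof is correct and follows essentially the same route the paper intends: the paper states this theorem without a separate written proof, presenting it as an immediate consequence of the structural equivalence together with Lemma \ref{Structure of U((1,0),(0,1))}, Lemmas \ref{initial terms of Ulam on coordinate axes}--\ref{initial terms of Ulam sequence}, and Lemma \ref{a(b+1), (a+1)b}, which is exactly the assembly you carry out (including the parity bookkeeping and the check that $a+(a-2)b<ab$ when $a$ is even). Your flag about the top of the progression in the $b$-even case is a genuine, if minor, imprecision in the paper's statement rather than a gap in your argument: when $b>2a$ the term $b+(b-2)a=ab+(b-2a)$ exceeds $ab$ (for instance $\U(3,8)$ contains $26=b+6a>ab=24$), so the displayed equality should be read with the right-hand side intersected with $[0,ab]$, which is precisely the correction your proof supplies.
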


\begin{corollary}
Let $\U(a,b)$ be an Ulam sequence with $a,b$ relatively prime, and one of $a,b$ even. Then:
\begin{itemize}
\item if $a$ is even, then $\U(a,b)$ has at least $1+a/2$ even terms.
\item if $b$ is even, then $\U(a,b)$ has at least $1+b/2$ even terms.
\end{itemize}
\end{corollary}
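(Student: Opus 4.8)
The plan is to obtain the corollary as an immediate counting consequence of the preceding theorem, which already pins down all even terms of $\U(a,b)$ in $[0,ab]$ explicitly. First I would treat the case where $a$ is even. The theorem identifies the even terms lying in $[0,ab]$ as precisely the set $\{a, a+2b, a+4b, \dots, a+(a-2)b\}$. These are the numbers $a + 2kb$ with $k$ ranging over $0 \leqslant k \leqslant (a-2)/2$, so there are exactly $(a-2)/2 + 1 = a/2$ of them; each is even because $a$ is even and $2kb$ is even. This confirms a count of $a/2$ even terms within $[0,ab]$.

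Next I would exhibit one further even term lying strictly beyond $ab$. By Lemma \ref{a(b+1), (a+1)b} we have $a(b+1) = a + ab \in \U$, and since $a$ is even, $a(b+1)$ is even. As $a(b+1) = a + ab > ab$, this term is distinct from the $a/2$ terms already counted, all of which lie in $[0,ab]$. Adjoining it gives at least $a/2 + 1 = 1 + a/2$ even terms, as claimed.

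The case where $b$ is even is entirely symmetric: the theorem lists the even terms in $[0,ab]$ as $\{b, b+2a, \dots, b+(b-2)a\}$, numbering $b/2$, and Lemma \ref{a(b+1), (a+1)b} supplies the additional even term $b(a+1) = b + ab > ab$, for a total of at least $1 + b/2$.

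Since every step is a short verification resting on results already in place, I do not anticipate any genuine obstacle. The only point requiring minor care is confirming that the extra term $a(b+1)$ (respectively $b(a+1)$) is not already among the listed even terms, which follows at once from the strict inequality $a(b+1) > ab$ (respectively $b(a+1) > ab$).
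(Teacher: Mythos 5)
Your proposal is correct and matches the paper's intended argument exactly: the paper states the corollary without a separate proof because it follows immediately from the preceding theorem, whose explicit list $\{a, a+2b, \dots, a+(a-2)b\}$ gives $a/2$ even terms in $[0,ab]$ and whose ``moreover'' clause (via Lemma \ref{a(b+1), (a+1)b}) supplies the additional even term $a+ab>ab$, with the $b$-even case symmetric. Your only addition is spelling out the counting and distinctness checks that the paper leaves implicit.
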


The following conjecture comes from Finch. If it is true, it shows that we hit almost all the even terms of $\U(a,b)$:
\begin{conjecture}\cite{finch_1992_1}
Let $\U(a,b)$ be an Ulam sequence with $a,b$ relatively prime, and one of $a,b$ even. Then:
\begin{itemize}
\item if $a$ is even, then $\U(a,b)$ has $2+a/2$ even terms, and these are precisely $\{a, a+2b,...,a+ab, (2a+4)b\}$.
\item if $b$ is even, then $\U(a,b)$ has $2+b/2$ even terms, and these are precisely $\{b, b+2a,...,b+ba, (2b+4)a\}$.
\end{itemize}
\end{conjecture}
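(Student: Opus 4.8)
The plan is to prove the statement in the case $a$ even (the $b$ even case being identical after interchanging the roles of $a$ and $b$), and to split it into an \emph{existence} half---that every element of the list $\{a, a+2b, \dots, a+ab, (2a+4)b\}$ is an even term of $\U(a,b)$---and an \emph{exhaustion} half, that there are no others. The existence half is almost entirely supplied by results already in hand. By the structural equivalence of $\U\cap f^{-1}(I)$ with $\W\cap I$ together with Lemma \ref{Structure of U((1,0),(0,1))} and Lemma \ref{initial terms of Ulam sequence}, an element $ja+ib$ with $0\le j<b$, $0\le i<a$ is even exactly when $i$ is even, and such a lattice point lies in $\W$ only when $j=1$; this pins the even terms of the initial window $[0,ab]$ down to $a, a+2b, \dots, a+(a-2)b$. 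The next term $a+ab=a(b+1)$ is exactly Lemma \ref{a(b+1), (a+1)b}. Hence the existence half reduces to the single new claim that $(2a+4)b\in\U$.

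For that claim I would argue precisely as in Lemma \ref{a(b+1), (a+1)b}. One first exhibits a single Ulam representation of $(2a+4)b$, using the terms lying just below it (whose membership is forced by the additive rule together with the even terms already located); then one rules out every competitor. Writing a putative representation as $(2a+4)b=(ca+db)+(ea+fb)$ and reducing modulo $a$ and modulo $b$, coprimality forces $a\mid(d+f)$ and $b\mid(c+e)$, and these divisibility constraints, combined with Lemma \ref{initial terms of Ulam on coordinate axes} forbidding the axis terms $ka$ and $lb$, collapse the admissible representations to the one already found. This is a finite, if tedious, case check and involves no new idea.

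The exhaustion half is the genuine difficulty, and is the reason the statement is only conjectural. The natural strategy mirrors the $\V(2,n)$ argument: let $x$ be the smallest even term exceeding $(2a+4)b$, so that below $x$ the even terms are \emph{exactly} the finite set $E=\{a,a+2b,\dots,a+ab,(2a+4)b\}$, of size $2+a/2$. Since $x$ is even, any representation $x=p+q$ with $p\ne q$ has $p,q$ of equal parity. The even--even representations draw both summands from $E$, so their sum is strictly below $2\max E=(4a+8)b$; thus for $x>(4a+8)b$ only odd--odd representations survive, and the finitely many intermediate $x$ can be inspected directly. One is therefore reduced to a pure statement about the odd part of $\U$: an odd $u$ lies in $\U$ precisely when exactly one even $e\in E$ has $u-e\in\U$, and reducing this count modulo $2$ yields an $\FF_2$-linear recurrence of exactly the Sierpinski type studied in Lemma \ref{fractal behavior of R}. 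The goal is to show that no even $x$ in this range admits \emph{exactly one} unordered pair of distinct odd $\U$-elements summing to it.

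The main obstacle is precisely this last autocorrelation statement. In the $\V(2,n)$ setting the even summand set had only two elements, so ``exactly one representation'' coincided with a parity condition and was controlled by the single indicator identity of Lemma \ref{(2,n) representations}; here $E$ has $2+a/2$ elements, and it is the genuine \emph{multiplicity} of odd--odd representations---one versus three versus five---rather than its parity, that the claim demands, so the $\FF_2$ recurrence alone is insufficient. I would attack this by transporting the problem through the Freiman map $f$ to the two-dimensional set $\W$ and the auxiliary triangle $R_k$, hoping the fractal rigidity forces a second representation whenever a first exists, with Finch's Theorem \ref{Finch's criterion} and the arithmetic-progression obstructions of Theorem \ref{arithmetic progressions in Ulam sequences} organizing the resulting periodic tail. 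Pushing this multiplicity analysis through uniformly in $a$ is exactly what remains open.
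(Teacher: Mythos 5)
The statement you were asked to prove is not proved in the paper at all: it is Finch's conjecture, quoted from \cite{finch_1992_1}, and the paper explicitly presents it as open. What the paper actually establishes is the weaker lower bound immediately preceding it, namely that $\U(a,b)\cap[0,ab]\cap 2\ZZ_+=\{a,a+2b,\dots,a+(a-2)b\}$ via the structural equivalence with $\W$, plus $a+ab\in\U$ via Lemma \ref{a(b+1), (a+1)b}, giving at least $1+a/2$ even terms. Your ``existence half'' up to $a+ab$ reproduces exactly this and is correct; your candid acknowledgment that the exhaustion half is the reason the statement is conjectural is also accurate, and your observation that the $\FF_2$-parity recurrence of the $\V(2,n)$ analysis cannot see the genuine multiplicity (one versus three representations) once the even summand set $E$ has more than two elements correctly identifies the real obstruction.

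The genuine gap is your claim that $(2a+4)b\in\U$ follows ``precisely as in Lemma \ref{a(b+1), (a+1)b}'' as a finite case check with no new idea. That lemma succeeds only because $a(b+1)=ab+a$ sits just beyond the fundamental domain $I$: the reduction of $a(b+1)=(ca+db)+(ea+fb)$ modulo $a$ and $b$ forces any competing representation to involve a summand of the form $ka$ or $lb$ or a lattice point inside the window $[0,ab]$, all of which are controlled by Lemmas \ref{initial terms of Ulam on coordinate axes} and \ref{initial terms of Ulam sequence}. For $(2a+4)b\approx 2ab$ this control evaporates: a putative representation may use two odd terms of $\U$ lying in $(ab,(2a+4)b)$, a region about which the structural equivalence with $\W\cap I$ says nothing, and the mod-$a$/mod-$b$ congruences alone do not collapse such pairs. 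Worse, even exhibiting a \emph{unique} representation of $(2a+4)b$ presupposes knowing that the even terms below it are exactly those in $E$ --- which is precisely the exhaustion statement the conjecture asserts --- so the argument as sketched is circular. This is why even the membership $(2a+4)b\in\U$ is not a routine corollary of the paper's lemmas (only isolated initial-condition cases of Finch's list have been settled, by Schmerl--Spiegel and Cassaigne--Finch, by substantially longer arguments). In short: your proposal is a reasonable research program, correctly calibrated about where the difficulty lies in the second half, but it overclaims the first half, and there is no paper proof to compare it to because none exists.
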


No comparably simple conjecture has been found for modified Ulam sequences $\V(a,b)$ where $a,b$ are relatively prime, $a$ is even, and $b>2a$ is odd. The patterns of even terms for various values of $a$ seem  erratic, see Conjecture \ref{even terms of regular modified Ulam sequences}.

\section{Some Empirical Observations about (1,1,1)-sequences}

We will now look at the behavior of $\Z_{(1,1,1)}$, also known as $(1,3)$-additive sequence. They have already been investigated by Finch \cite{finch_1992_1}. We summarize his results and our observations, as well as present open questions concerning these sequences. First we introduce terminology that we use while describing these sequences. Given a regular sequence $(a_n)$, the period $N$ is the smallest positive integer s.t. $a_n=a_{n+N}$ for all $n>n_0$ for some $n_0\in\NN$. The quantity $D=a_{n+N}-a_n$ for $n>n_0$ is called the fundamental difference, and it is easy to see that the density of $(a_n)$ is $N/D$. Given $N$ and $n_0$, we take the sequence of differences to be 
$$P=(a_{n_0+1}-a_{n_0}, a_{n_0+2}-a_{n_0+1},...,a_{n_0+N}-a_{n_0+N-1})$$ for $n>n_0$. Then the sequence $(a_{n+1}-a_n)_{n=n_0}^\infty$ is a union of copies of $P$. Note that $P$ depends on the choice of $n_0$ - if we pick two different $n_0$'s, then the resulting sequences of differences will differ by a shift. 

Finch proved that the following $(1,1,1)$-sequences are regular \cite{finch_1992_1}: 
\begin{theorem}[Finch]
The following sequences are regular:
\begin{enumerate}
\item $\Z_{(1,1,1)}(1,2,w)$ is regular with $w = 3\mod 6$ for $w>45$. Moreover, $N=\frac{1}{3}(7w+9)$ and $D=21w+1$.
\item $\Z_{(1,1,1)}(1,2,w)$ is regular with $w = 0\mod 6$ for $w>24$. Moreover, $N=w+1$ and $D=7w+1$.
\item $\Z_{(1,1,1)}(1,3,w)$ is regular with $w = 0\mod 2$ for $w>22$. Moreover, $N=w+1$ and $D=7(w+1)$.
\item $\Z_{(1,1,1)}(1,3,w)$ is regular with $w = 1\mod 4$ for $w>17$. Moreover, $N=\frac{1}{4}(w+3)$ and $D=5w+9$.
\end{enumerate}

\end{theorem}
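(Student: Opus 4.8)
The plan is to prove regularity in the sense of Definition \ref{Definition of Regular modified Ulam sequences}, that is, to show that the difference sequence $(a_{n+1}-a_n)$ is eventually periodic, and then to read off the period $N$ and fundamental difference $D$ from the repeating block $P$. Since each of the four families $\Z_{(1,1,1)}(1,2,w)$ and $\Z_{(1,1,1)}(1,3,w)$ contains the small generators $1,2$ or $1,3$, I would first compute an initial segment long enough to expose the eventual pattern, conjecture a transient bound $n_0=n_0(w)$ (this is the origin of the hypotheses $w>45$, $w>24$, $w>22$, $w>17$), and verify that past $n_0$ the differences repeat a block $P$ of length $N$ summing to $D$. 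The congruence conditions on $w$ ($w\equiv 3,0 \bmod 6$, or $w\equiv 0,1 \bmod 4$) enter because the emergent residue structure of the sequence, hence the shape of $P$, depends on $w$ modulo a small modulus; the four cases are then handled by the same method with different blocks.

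The engine of the proof is an analogue of Finch's criterion (Theorem~\ref{Finch's criterion}), adapted from two summands to three. For a large candidate $s$, I would classify its representations $s=a+b+c$ with $a<b<c$ all in the sequence according to how many summands lie in a fixed finite initial segment $[1,M]$. Representations in which all three summands are large are governed by the periodic tail: under the inductive hypothesis that the sequence already agrees with the union-of-progressions ansatz below $s$, their count depends only on $s \bmod D$. Representations using one or two small summands reduce to counting elements of the sequence in a bounded window translated by a fixed small set, so these contributions are likewise eventually periodic in $s$. Collecting the cases, the representation count $r(s)$ becomes a function of $s \bmod D$ alone once $s$ is large, and I would then check self-consistency: exactly the residues predicted by the ansatz satisfy $r(s)=1$, which closes the induction and forces the differences to repeat $P$ forever.

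To pin down the explicit $N$ and $D$, the base-case computation must be pushed past $n_0$ far enough that every small-summand template has saturated; the lower bounds on $w$ guarantee precisely this, ensuring that no new small-summand representation can first appear beyond the transient. The main obstacle I anticipate is exactly this last point: proving the three-summand analogue of \emph{finitely many even terms}, namely that the sequence meets only finitely many residues outside the periodic pattern and that the interaction between small and large summands stabilizes. With three summands there is strictly more mixing than in the Ulam $(1,2)$ case, so the bookkeeping of cross terms of type $(\text{small})+(\text{large})+(\text{large})$ and $(\text{small})+(\text{small})+(\text{large})$ is the delicate part; controlling it uniformly in $w$, and confirming that the first occurrence of every representation type lies below $n_0$, is where the real work concentrates. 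As a final consistency check one verifies $N/D$ against the observed density (e.g. $N/D\to 1/9$ in case (1) and $N/D=1/7$ or $\to 1/7$ in cases (2)--(3) as $w\to\infty$), in agreement with the stated fact that the density equals $N/D$.
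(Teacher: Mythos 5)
This theorem is not proved in the paper at all: it is quoted verbatim from Finch \cite{finch_1992_1} as background for the empirical observations of Section 6, so there is no internal proof to compare yours against. Judged on its own terms, your proposal is a strategy outline rather than a proof, and the gap sits exactly where you yourself locate it. The content of the theorem is the \emph{explicit, uniform-in-$w$} formulas $N=\frac{1}{3}(7w+9)$, $D=21w+1$, etc.; your plan to ``compute an initial segment long enough to expose the eventual pattern'' and then read off $P$, $N$, $D$ can only ever handle one value of $w$ at a time, whereas each case asserts regularity for infinitely many $w$ in a congruence class. To close this, the candidate periodic structure must be written down \emph{symbolically as a function of $w$} (a parametrized ansatz for which residues occur and where the block boundaries fall), and the induction must be run on that symbolic object. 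You never produce this ansatz, so the values of $N$ and $D$ — the actual assertion being proved — are never derived, only promised.

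The second unfilled step is the inductive engine itself. You assert that representations with one or two small summands ``reduce to counting elements of the sequence in a bounded window translated by a fixed small set, so these contributions are likewise eventually periodic in $s$,'' but this is precisely the three-summand analogue of ``finitely many even terms,'' and it is a theorem to be proved, not an observation: with three summands, the set of pairs $(a,b)$ of small elements whose translates $s-a-b$ must be tracked grows quadratically in the transient length, and one has to show this family of constraints is consistent with (and forces) the ansatz, uniformly in $w$. Your own closing paragraph concedes this is ``where the real work concentrates'' and that you have not done it. An honest summary is that your proposal correctly identifies the shape of Finch's argument (self-consistent induction on a periodic ansatz, with the congruence conditions on $w$ and the thresholds $w>45,24,22,17$ controlling the transient), but every step that distinguishes a proof from a plausible plan — the symbolic ansatz, the verification that $r(s)=1$ exactly on the predicted residues, and the stabilization of small-summand interactions — is left as future work.
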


We have observed that all $\Z_{(1,1,1)}(1,2,w)$ are regular for $w = 0\mod 3$ except $w=6$. The details of their period, sequence of differences, and fundamental difference are provided below:

\begin{observation}
\
\begin{itemize}
\item $w=3$: $N=3$, $D=25$, $P=(1,2,22)$.
\item $w=9$: $N=86$, $D=572$.
\item $w=12$: $N=112$, $D=760$.
\item $w=15$: $N=16$, $D=106$.
\item $w=18$: $N=206$, $D=1394$.
\item $w=21$: $N=52$, $D=442$.
\item $w=24$: $N=665$, $D=3581$.
\item $w=27$: $N=47$, $D=378$.
\item $w=33$: $N=80$, $D=694$.
\item $w=39$: $N=40$, $D=274$. 
\item $w=45$: $N=46$, $D=316$.
\end{itemize}
\end{observation}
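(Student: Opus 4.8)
The plan is to prove the regularity of each listed sequence $\Z_{(1,1,1)}(1,2,w)$ by establishing an eventual \emph{bounded-memory} rule for membership, and then to extract the tabulated period $N$ and fundamental difference $D$ from a finite computation whose exactness is guaranteed by that rule. The skeleton mirrors Finch's regularity criterion for $\V$-sequences, but adapted to three-term sums.

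The first step is to prove a $(1,3)$-analog of the finitely-many-even-terms phenomenon: I want to show that only finitely many \emph{small} terms can occur as the least summand in a representation $u=a+b+c$ (with distinct $a<b<c$) of an arbitrarily large $u$. For the generators $1,2,w$ with $w\equiv 0\bmod 3$, the residues of the three generators modulo $3$ are $1,2,0$, and a sum of three terms lands in a prescribed residue class modulo $3$ (or modulo $6$, once parity is folded in) only for restricted residue patterns of the summands. The expectation is that this residue obstruction forces all but finitely many terms into a fixed class, so that the admissible small-summand values form a finite set $\{s_1,\dots,s_r\}$ bounded by some constant $M$. This is precisely the step where $w=6$ should fail, which is why it appears as the lone exception.

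With the small summands bounded by $M$, the second step is to show that membership of a large $u$ is determined entirely by the restriction of the indicator $1_{\Z}$ to the window $[u-M,u)$ together with the finite data $\{s_i\}$. A representation with two large summands reads $u=c+(a+b)$ where one of $a,b$ is small, so the complementary large summand is pinned down by the window; representations with three large summands or with only small summands contribute only boundedly and are likewise window-determined. Consequently, whether $u\in\Z$ becomes a fixed Boolean function of the length-$M$ binary window ending just below $u$, and the map sending one window to the next is a deterministic self-map of a finite set. Hence the window sequence, equivalently the difference sequence $(a_{n+1}-a_n)$, is eventually periodic, with eventual period equal to the gap until the first repeated window. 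This reduces the claim "period $N$, fundamental difference $D$" to the finitely-checkable statement that the window dynamics first cycles after producing $N$ new terms spanning an interval of length $D=a_{n+N}-a_n$; for each listed $w$ one iterates past the transient, detects the repeated window, and reads off $N$ and $D$, matching the table.

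The main obstacle is the first step: finiteness of the small-summand set for three-term sums. Unlike the two-summand setting, where "the only even summands are $2$ and $2n$" immediately bounds the exceptional summands, here a large $u$ admits the extra representation shape large $+$ large $+$ small, so one must rule out infinitely many distinct small summands surviving into the tail. I expect this to require a careful residue analysis modulo the relevant small modulus together with a pigeonhole argument showing that once the density stabilizes the realizable small-summand set cannot grow. The secondary difficulty is certifying the exact transient length $n_0$, so that the read-off $N$ and $D$ are proven rather than merely empirical; this is where the explicit per-$w$ computation, legitimized by the bounded-memory rule, does the remaining work.
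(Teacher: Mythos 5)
The first thing to say is that the paper offers no proof of this statement at all: it is an empirical \emph{Observation}, introduced with ``We have observed that\dots'', i.e.\ it is reported numerical data. Indeed, the paper's own follow-up discussion stresses that several of the listed pairs (e.g.\ $w=9,12,18,24,27$) do \emph{not} fall under the families Finch proved regular, and the accompanying open questions ask whether a criterion in the spirit of Finch's even exists for $(1,1,1)$-sequences. So you are attempting to prove something the paper leaves unproven; your attempt must therefore be judged on its own, and it contains a genuine gap.

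The gap is in your second step, the bounded-memory reduction. In the two-summand setting (the paper's version of Finch's criterion, that a $\V$-sequence with finitely many even terms is regular), the argument works because parity forces every representation of a large odd $u$ to have the form $u=e+(u-e)$ with $e$ in the finite set $E$ of even terms; hence the second summand lies in the window $[u-\max E,\,u)$, membership of $u$ is a Boolean function of that window, and the window-to-window map is a self-map of a finite set, giving eventual periodicity. With three summands this collapses: even granting your first step (all admissible least summands bounded by $M$), a representation $u=s+b+c$ with $s\leqslant M$ only constrains $b+c=u-s$, and $b,c$ can both sit near $(u-s)/2$, far outside any bounded window below $u$. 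The number of representations of $u$ is then a pair-correlation count over the entire sequence up to $u$, not a function of $1_{\Z}$ restricted to $[u-M,u)$. Your sentence ``the complementary large summand is pinned down by the window'' is precisely the false step: pinning the third summand near $u$ requires \emph{two} of the three summands to be confined to finite sets, a much stronger statement than anything your first step aims at --- and that first step is itself only stated as an ``expectation'' (for $w\equiv 0 \bmod 3$ the generators $1,2,w$ have residues $1,2,0$, so three-term sums reach every residue class and no obstruction is actually exhibited). Without the bounded-memory rule, the finite-state window dynamics and the claim that a first repeated window certifies the exact values of $N$ and $D$ do not follow, so the table remains an unproven (empirical) statement --- which is exactly how the paper presents it.
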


This merits some immediate observations:
\begin{observation} \label{various (1,1,1)-sequences}
\
\begin{enumerate}
\item The formulas for period and fundamental difference for $w=21, 33$ follow the same rule as the formulas for $w>45$, $w = 3 \mod 6$ proved by Finch. However, if one looks more closely at how these sequences evolve, the evolution of $\Z_{(1,1,1)}(1,2,21)$ and $\Z_{(1,1,1)}(1,2,33)$ differs from the evolution of $\Z_{(1,1,1)}(1,2,w)$ for $w>45$, $w = 3 \mod 6$.
\item The formulas for period and fundamental difference for $w=15, 39, 45$ follow the same rule as the formulas for $w>24$, $w = 0 \mod 6$ proved by Finch. Like above, the initial evolution of $\Z_{(1,1,1)}(1,2,15)$, $\Z_{(1,1,1)}(1,2,39)$ and $\Z_{(1,1,1)}(1,2,45)$ is however different from the evolution of $\Z_{(1,1,1)}(1,2,w)$ for $w>24$, $w = 0 \mod 6$.
\item The sequences of differences for the cases $w=15, 39, 45$ and $w>24$, $w = 0 \mod 6$ consist of $1$'s and one $6w+1$. The sequence of differences for $w=9, 12, 18, 24$ consists of several $6w+1$'s and multiple $1$'s. 
\item All these sequences except for $w=18, 24$ become regular extremely fast (the transient phase takes at most several multiples of $w$). In the case of $w=18$, the transient phase takes $\approx 250$ terms, and in the case of $w=24$, it takes $\approx1400$ terms, which is still fast compared to regular Ulam sequences in which transient phase may even take $10^8$ terms, according to computations made by Finch \cite{finch_1992_1}. 
\end{enumerate}
\end{observation}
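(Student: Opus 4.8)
The assertions in Observation \ref{various (1,1,1)-sequences} are computational in nature, concerning finitely many explicit sequences $\Z_{(1,1,1)}(1,2,w)$, and the plan is to verify each by generating the sequence together with a rigorous certificate that the observed difference pattern is genuinely eventually periodic. First I would implement the greedy $(1,3)$-additive rule and produce terms well past the apparent onset of periodicity for each listed $w$. Reading off the repeating block of first differences yields a candidate period $N$, fundamental difference $D$ (the sum of one block), and difference word $P$; direct substitution then checks the closed forms $N=\tfrac13(7w+9)$, $D=21w+1$ against $w=21,33$ and $N=w+1$, $D=7w+1$ against $w=15,39,45$. The striking content is that $15,39,45$ obey the $0\bmod 6$ formula although they are congruent to $3\bmod 6$, while $21,33$ obey the $3\bmod 6$ formula although they lie below Finch's threshold $w>45$; no residue-class argument covers these cases uniformly, so the verification must proceed value by value.

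The main obstacle is that apparent periodicity from a finite computation is not a proof of regularity, and Finch's theorem certifies regularity only for $w>45$ with $w\equiv 3\bmod 6$ and for $w>24$ with $w\equiv 0\bmod 6$, whereas every interesting value $15,21,33,39,45$ lies below these bounds. To close this gap I would adapt the finiteness mechanism behind Finch's argument: assuming the difference word has stabilized on a window $[n_0,n_0+N]$, prove by induction that the number of representations as a sum of three earlier terms of each subsequent integer is determined entirely by the periodic pattern on the preceding blocks, so that the block reproduces itself indefinitely and no new irregular term ever intervenes. This is exactly the step where the three-summand structure must be handled with care: unlike the two-summand Ulam case, a candidate term may draw its summands from several different periods of the skeleton, and the induction must track how the large jumps of size $6w+1$ interact across period boundaries. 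Establishing this closure for each of the five small values is the genuine mathematical work; once it is in place, the finite computation pins down $N$, $D$, and $P$ exactly.

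For the structural claim in part (3) I would read the composition of the periodic block $P$ directly from the computation. In the cases with $N=w+1$ and $D=7w+1$, a count of $N$ entries together with $w\cdot 1+(6w+1)=D$ forces $P$ to be $w$ ones and a single $6w+1$, which one confirms; for $w=9,12,18,24$ the larger parameters (for instance $N=86$, $D=572$ at $w=9$, where $86+54\cdot 9=572$ records nine jumps of size $6w+1=55$) preclude a single large difference, and inspection of $P$ exhibits the several large jumps and intervening runs of $1$'s described. The transient-length statements in part (4) are read off directly once regularity is certified, by locating the first index at which $P$ appears and invoking the induction above to rule out any later irregularity. Finally, the assertions in parts (1)--(2) that the early evolution differs from the large-$w$ regimes despite matching $N$ and $D$ amount to exhibiting and comparing the two transient prefixes, which the computation supplies explicitly.
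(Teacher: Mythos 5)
The statement you are addressing is labeled an \emph{observation} in the paper, and the paper in fact offers no proof of it at all: it is reported as a purely empirical finding, obtained by generating the sequences $\Z_{(1,1,1)}(1,2,w)$ by computer and reading off the period $N$, the fundamental difference $D$, and the difference word $P$ (elsewhere the paper concedes it could only generate at most several thousand terms of such sequences). Your first stage --- implementing the greedy $(1,3)$-additive rule, extracting the repeating block, and substituting into Finch's closed forms $N=\tfrac13(7w+9)$, $D=21w+1$ and $N=w+1$, $D=7w+1$ --- is precisely the content the observation rests on, and your arithmetic is right: for $w=9$ the count $86+54\cdot 9=572$ does force nine jumps of size $55$; the values $15,39,45$ are congruent to $3\bmod 6$ yet satisfy the $0\bmod 6$ formulas, which is exactly the point of item (2); and $21,33$ satisfy the $3\bmod 6$ formulas below Finch's threshold $w>45$, the point of item (1).

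Where you go beyond the paper is the proposed certificate that the observed pattern persists forever, i.e.\ adapting Finch's self-propagation induction to three summands. That is the right idea if one wants to upgrade the observation to a theorem, but note two things. First, the paper does not attempt this, so there is nothing in the paper to compare it against; the observation stands (and is explicitly presented) as numerical evidence, not as a proved result. Second, in your write-up the propagation step remains a named obstacle rather than an executed argument: you would need to show that the representation count of each new integer as a sum of three earlier terms depends only on a bounded window together with the fixed transient prefix, and you would need to control sums drawing one summand from the transient and the other two from different periods of the skeleton --- precisely the complication you flag. Until that induction is actually carried out for each of $w=15,21,33,39,45$ (and for $w=9,12,18,24$, where the block contains several large jumps and is harder to control), your proposal is, like the paper's observation itself, evidence rather than proof. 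In short: your plan is consistent with what the paper did, is more ambitious than the paper, but the ambitious part is still a sketch with its central lemma unproved.
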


Based on the numerical analysis of $(1,1,1)$-sequences of the form $\ZZ_{(1,1,1)}(1,2,w)$, $\ZZ_{(1,1,1)}(3,4,w)$, and $\ZZ_{(1,1,1)}(5,6,w)$, there seem to be three categories of $(1,1,1)$-sequences:
\begin{enumerate}
\item regular $(1,1,1)$-sequences
\item $(1,1,1)$-sequences with quasiperiodic behavior and possibly multiple quasi-periods. We call a sequence $(a_n)$ $\textit{quasiperiodic}$ with $\textit{quasiperiod}$ $\lambda$ if the distribution of $(a_n)$ mod $\lambda$ converges to a limit distribution that is non-uniform and non-discrete. Steinerberger discovered that $\U(1,2)$ and many other Ulam sequences are quasiperiodic \cite{steinerberger_2016} with quasiperiod $\lambda\sim 2.4...$, as described in Section \ref{Background}.
\item sequences $(a_n)$ with the following properties: there exist natural numbers $n_1 < n_2 < n_3 <...$ s.t. $a_{n_2}-a_{n_1}\approx a_{n_3}-a_{n_2}\approx a_{n_4}-a_{n-3}\approx ...$ and $a_{n_{i+1}}-a_{n_{i+1} - 1}$ are much greater than all of $a_{n_i+1}-a_{n_i}$, $a_{n_i+2}-a_{n_i +1}$, ... $a_{n_{i+1}-1}-a_{n_{i+1}-2}$. We call such sequences quasi-regular because they behave "almost regularly".
\end{enumerate}

	\begin{figure}[h!]
	\includegraphics[height = 0.25\textheight]{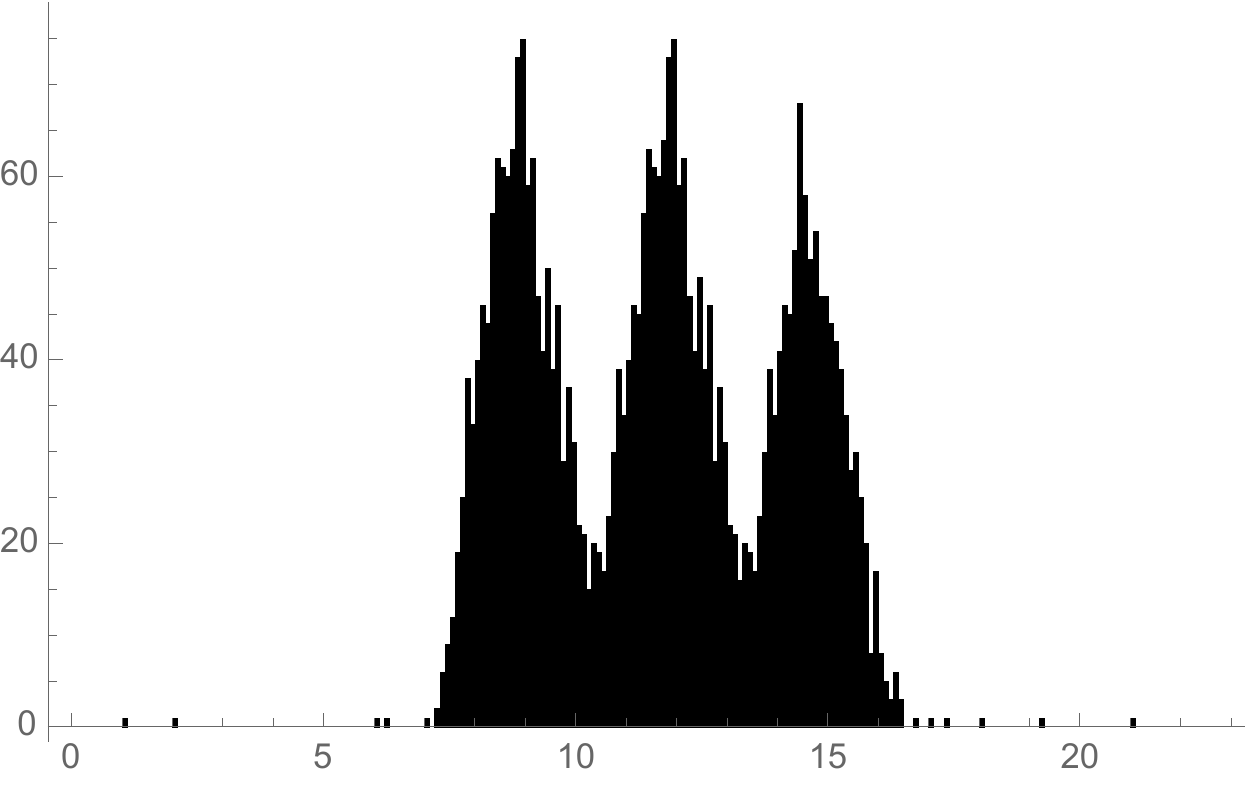}
	
	\caption{The distribution of $\Z_{(1,1,1)}(1,2,6) \mod 22.893$.}
	\label{U126}
	\end{figure}

Quasi-regularity seems to be a widespread phenomenon in $(1,1,1)$-sequences. For instance, most of $\Z_{(1,1,1)}(3,4,w)$ and $\Z_{(1,1,1)}(5,6,w)$ behave this way. 
We have made several observations concerning quasi-regular sequences:

\begin{observation}
\
\begin{enumerate}
\item Quasi-regular sequences do not exhibit quasiperiodic behavior. 
\item For fixed $u,v$, and $w$ in a fixed modulo class mod $u+v$, $\Z_{(1,1,1)}(u,v,w)=(a_n)$ has the following property: the average of $a_{n_{i+1}}-a_{n_i}$ is linear in $w$, where $(n_i)_i$ is a different sequence for each $w$ constructed as in Observation \ref{various (1,1,1)-sequences}. For example:
\begin{itemize}
\item for $\Z_{(1,1,1)}(3,4,w)$, $w=0 \mod 7$, $a_{n_{i+1}}-a_{n_i}\approx 31.145+10.559 w$
\item for $\Z_{(1,1,1)}(3,4,w)$, $w=1 \mod 7$, $a_{n_{i+1}}-a_{n_i}\approx 11.758 + 11.065 w$
\item for $\Z_{(1,1,1)}(5,6,w)$, $w=0 \mod 11$, $a_{n_{i+1}}-a_{n_i}\approx 39.111 + 15.088 w$
\item for $\Z_{(1,1,1)}(5,6,w)$, $w=2 \mod 11$, $a_{n_{i+1}}-a_{n_i}\approx 29.980 + 12.862 w$
\item for $\Z_{(1,1,1)}(6,11,w)$, $w=0 \mod 17$, $a_{n_{i+1}}-a_{n_i}\approx 90.979 + 18.822 w$
\end{itemize}
\end{enumerate}
\end{observation}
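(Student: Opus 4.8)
The plan is to treat both assertions as conjectural structural statements about the long-term dynamics of $\Z_{(1,1,1)}(u,v,w)$ and to reduce each to a single structural input, in the spirit of the Finch regularity results recalled above. Fix $u,v$ and a residue class $w \equiv r \pmod{u+v}$. The engine I would build is a \emph{self-similar block decomposition}: I want to show that for all sufficiently large $w$ in this class there are a jump size $J(w)$ and a block length $L(w)$, both increasing with $w$, so that the indices $n_i$ isolate consecutive runs of small differences---each run of comparable length $L(w)$ and governed by an eventually periodic difference pattern---separated by a single large difference $a_{n_{i+1}}-a_{n_{i+1}-1} \approx J(w)$. This is exactly the qualitative picture of quasi-regularity in item (3) of the trichotomy above, and once it is in hand both items of the Observation become accessible.

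First I would attack the linearity statement (item 2). Using the Freiman-homomorphism bookkeeping developed in the preceding sections, one expects the \emph{local} additive structure near the start of $\Z_{(1,1,1)}(u,v,w)$ to be governed by a fixed finite configuration depending only on $u,v$ and $r$, with $w$ entering purely as a scaling parameter. The key step is to show that the inter-jump span $a_{n_{i+1}}-a_{n_i}$ eventually stabilises: it consists of a run of small gaps summing to roughly $L(w)$ together with one large jump $\approx J(w)$, and if $J(w)$ and $L(w)$ are each eventually affine in $w$ (which is forced once the local configuration is fixed and $w$ is the only large parameter), the span is affine in $w$ as well. Averaging over one period then yields an affine function $\alpha_r + \beta_r w$, with the intercept $\alpha_r$ coming from the fixed initial configuration and the slope $\beta_r$ from the growth rate of the jump; matching these against the empirical slopes and intercepts listed would complete item 2.

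For item 1 I would argue that a genuine block decomposition is incompatible with quasiperiodicity in the sense defined above. Quasiperiodicity requires the residues $a_n \bmod \lambda$ to possess a limit distribution that is simultaneously non-uniform and non-discrete. But in a quasi-regular sequence the long runs of small, eventually periodic gaps dominate the counting, and these runs behave like long arithmetic-type progressions; a Weyl-type equidistribution argument applied to such a progression shows that its residues mod $\lambda$ either equidistribute (when the step is irrational relative to $\lambda$, yielding a uniform limit) or collapse onto finitely many points (when it is rational, yielding a discrete limit). The sparse jumps $J(w)$ contribute a vanishing proportion of the terms and cannot manufacture an intermediate fractal-type law. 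Hence the limit distribution is forced to be uniform or discrete, contradicting quasiperiodicity, which is the content of item 1.

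The hard part---and the reason these are stated only as observations---is the very first reduction: rigorously proving that $\Z_{(1,1,1)}(u,v,w)$ actually \emph{possesses} a stable block decomposition with controlled $J(w)$ and $L(w)$, uniformly as $w$ ranges over a residue class. Greedy additive sequences are notoriously resistant to exactly this kind of analysis; even the Finch regularity theorems above are established only for specific families by delicate case exhaustion, and no general mechanism is known for controlling either the transient phase or the eventual period as $w$ varies. The proposal above should therefore be read as a blueprint conditional on that structural theorem, with the averaging computation of item 2 and the equidistribution argument of item 1 being comparatively routine once the two-scale block picture $(L(w),J(w))$ is secured.
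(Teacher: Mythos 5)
There is nothing in the paper for your proposal to be measured against: the statement you were given is an \emph{Observation}, not a theorem, and the paper supplies no proof of it at all. The coefficients $31.145+10.559w$, $39.111+15.088w$, and so on are numerical fits to computed data --- the author states explicitly that only a few thousand terms of each $(1,1,1)$-sequence could be generated --- and both items are offered as empirical phenomena, with the surrounding text immediately listing ``Are quasi-regular sequences regular?'' among the open questions. So the only honest comparison is between your conditional blueprint and raw numerics.

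Viewed as a proof attempt, your proposal has exactly the gap you yourself flag, and it is fatal rather than deferrable: the entire argument is conditioned on the unproven structural theorem that $\Z_{(1,1,1)}(u,v,w)$ admits a stable two-scale block decomposition $(L(w),J(w))$, uniformly as $w$ ranges over a residue class mod $u+v$. That hypothesis is not a lemma one can expect to supply later; it essentially \emph{is} the open problem, and none of the paper's machinery helps --- the Freiman-homomorphism arguments control only the initial segment $[0,ab]$ of two-generator sequences, and the regularity results for $(1,1,1)$-sequences are Finch's, established family by family through case exhaustion with no mechanism that is uniform in $w$. Two further soft spots remain even if that input were granted. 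First, your Weyl dichotomy for item 1 treats each run as a single arithmetic-type progression, but the runs are re-initialized after every jump, so the limit distribution mod $\lambda$ also depends on the distribution of run-starting phases; uniformity or discreteness of the overall limit law requires controlling those phases, which is an additional unverified hypothesis, not a consequence of the block picture. Second, for item 2 no derivation can terminate in the listed decimal constants --- those are regression coefficients --- so ``matching these against the empirical slopes and intercepts'' is not a step a proof can contain; at best you could prove existence of \emph{some} affine law $\alpha_r+\beta_r w$. In short, your diagnosis of where the difficulty sits is accurate and the blueprint is a reasonable research program, but it is not a proof, and the paper itself proves nothing here.
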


It may be the case that quasi-regular sequences are in fact regular with a long transient phase. An argument in favor of this assertion is that regular sequences originally behave quasi-regularly. Since we have only been able to generate at most several thousand terms of $(1,1,1)$-sequences, we have been unable to verify this assertion. 

\subsection{Questions}
The following list of questions provides possible directions for future research on $(1,1,1)$-sequences: 

\begin{enumerate}
\item Are quasi-regular sequences regular?
\item Is there a simple criterion that would allow us to determine which $(1,1,1)$-sequences are regular, similar to Finch's criterion for regularity of Ulam sequences (this question has already been asked by Finch in \cite{finch_1992_1})?
\item Can one make a general conjecture about which families of $(1,1,1)$-sequences are regular, quasi-regular, or quasiperiodic?
\item Do various quasi-periods of quasiperiodic $(1,1,1)$-sequences follow any recognizable patterns? Are quasi-periods of a fixed $(1,1,1)$-sequence independent or related? 

\end{enumerate}

\section{$(2,1)$-sequences}

In the previous section, we have observed that a lot of $(1,1,1)$-sequences are quasi-regular - a phenomenon that does not appear in Ulam sequences. $(2,1)$-sequences also have interesting properties, some of which are similar to the properties of Ulam sequences while others are distinct. 

%Of all the classes of sequences investigated in this paper, the least has been established about this one, even conjecturally. 

\subsection{$a,b$ odd}
\

First, we make two crucial observations. If $a,b$ are relatively prime and odd, then $\Z_{(2,1)}(a,b)$ will have no even terms. Moreover, if $z=2x+y = i \mod 4$ for odd $i$, then $y = i+2 \mod 4$. 

Such sequences seem to fit in one of the two categories:
\begin{enumerate}
\item $\Z_{(2,1)}(a,b)$ has finitely many terms in one of the odd residue classes mod 4. Moreover, all but finitely many positive integers in this residue class have multiple representations as a sum $2x+y$ for distinct $x,y\in \Z_{(2,1)}(a,b)$.
\item $\Z_{(2,1)}(a,b)$ has infinitely many terms in both odd residue classes mod 4, but one of them contains a larger proportion of terms of the sequence.
\end{enumerate}
\
The first category seems more prevalent, and its examples include:
\begin{itemize}
\item all $\Z_{(2,1)}(3,b)$ for $b$ odd and not divisible by 3 up to $b<45$
\item all $\Z_{(2,1)}(1,b)$ for $b$ odd, $b<50$ excluding $b=29,37,41,45$
\item $\Z_{(2,1)}(5,21)$, $\Z_{(2,1)}(5,23)$
\item $\Z_{(2,1)}(7,53)$
\end{itemize}

The only examples of the second catergory that we have found so far are $\Z_{(2,1)}(1,b)$ for $b=29,37,41,45,53,65,81$.
In first category, the category of $(2,1)$-sequences with finitely many terms in one of the odd residue classes, we encounter several interesting phenomena that we will now analyze.

\begin{example}
$\Z_{(2,1)}(1,3)$
\end{example}
First, one of these sequences is regular:

\begin{theorem}
$\Z_{(2,1)}(1,3)$ is regular. More precisely, 
$$\Z_{(2,1)}(1,3)=\{3,15\}\cup(4\ZZ_{\geqslant 0}+1)\setminus{\{9\}}=\{1, 3, 5, 13, 15, 17, 21, 25,...\}$$
\end{theorem}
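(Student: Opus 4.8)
The plan is to prove the identity by strong induction on the terms, after reducing the self-referential greedy definition to two clean counting statements about the target set $T := \{3,15\} \cup \left((4\ZZ_{\geqslant 0}+1)\setminus\{9\}\right)$. The crucial structural observation is a congruence constraint. Since every element produced has the form $2x+y$ with $x,y$ odd, every term is odd, and no even number is ever eligible. Moreover, reducing modulo $4$ gives $2x+y \equiv 2+y \pmod 4$, so a value $\equiv 1 \pmod 4$ can only arise from a summand $y \equiv 3 \pmod 4$, whereas a value $\equiv 3 \pmod 4$ forces $y \equiv 1 \pmod 4$. The only elements of $T$ that are $\equiv 3 \pmod 4$ are $3$ and $15$, so representations of numbers $\equiv 1 \pmod 4$ are severely restricted, and this is what makes the whole argument tractable.

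Next I would record the reduction. Because each new term exceeds all previous ones and any representation $z=2x+y$ uses summands strictly smaller than $z$, the greedy process lists, in increasing order, exactly those $z$ having a \emph{unique} representation $z=2x+y$ with $x\ne y$ and $x,y$ among the already-listed terms; by the induction hypothesis the listed terms below $z$ are precisely $T\cap[1,z)$. Hence it suffices to prove, with $T$ as ambient set, that (A) every $z\in T$ with $z\geqslant 17$ has exactly one representation $z=2x+y$, $x\ne y$, $x,y\in T$, and (B) every $z\notin T$ with $z>15$ has either zero or at least two such representations. Together with a direct check of the base case $\Z_{(2,1)}(1,3)\cap[1,16]=\{1,3,5,13,15\}$ (so that $7,9,11$ and every even number are correctly skipped), the induction then yields $\Z_{(2,1)}(1,3)=T$.

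Claim (B) is the easy half. Even numbers have no representation at all. For $z\equiv 3\pmod 4$ with $z\geqslant 19$ I would exhibit two genuinely distinct representations outright, namely $z=2\cdot 1+(z-2)$ and $z=2\cdot 3+(z-6)$: both $z-2$ and $z-6$ are $\equiv 1\pmod 4$, are at least $13$, and never equal the excluded value $9$ in this range, so both lie in $T$, and the two summand-pairs are distinct. This disqualifies every such $z$ at once.

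The main work, and the main obstacle, is Claim (A). For $z\equiv 1\pmod 4$ the only admissible summands $y$ are $3$ and $15$, so there are at most two candidate representations, governed by $u:=(z-3)/2$ and $v:=(z-15)/2=u-6$; a candidate is valid exactly when it lies in $T$ and differs from its paired $y$. Since $u$ and $v$ differ by $6$, one is $\equiv 1$ and the other $\equiv 3\pmod 4$, so I would split on $z\bmod 8$. In each case the $\equiv 3\pmod 4$ candidate lies in $T$ only if it equals $3$ or $15$, while the $\equiv 1\pmod 4$ candidate lies in $T$ unless it equals the excluded $9$. The delicate point is that these two exceptional phenomena are perfectly correlated: the precise values $z=21,33$ for which the generic candidate fails (a candidate hitting $9$) are exactly those where the other candidate is rescued by hitting $3$ or $15$, while at $z=45$ the candidate $v=15$ coincides with its own summand $y=15$ and is disqualified, again leaving exactly one valid representation. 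Thus verifying that every $z\equiv 1\pmod 4$ with $z\geqslant 17$ has exactly one valid representation reduces to this finite, explicitly checkable coincidence at $z\in\{21,33,45\}$, with all other $z$ supplying the single generic representation. Assembling (A) and (B) completes the induction and proves the stated formula.
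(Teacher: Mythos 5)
Your proof is correct and follows essentially the same route as the paper's: strong induction on the greedy process, using the mod-4 constraint to force the summand $y\in\{3,15\}$ for targets $\equiv 1 \pmod 4$, so that the only candidates are $(z-3)/2$ and $(z-15)/2$, exactly one of which is $\equiv 1 \pmod 4$, while targets $\equiv 3 \pmod 4$ are killed by the two representations $2\cdot 1+(z-2)$ and $2\cdot 3+(z-6)$. The only difference is bookkeeping: the paper verifies the sequence directly up to $45$ and starts its induction there, so the coincidences at $z=21,33,45$ that you resolve inside your case analysis of Claim (A) are simply absorbed into its base case.
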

\begin{proof}
First, note that the first 13 terms of the sequence are 1, 3, 5, 13, 15, 17, 21, 25, 29, 33, 37, 41, 45. I claim that for all $a\geqslant 45$, $a\in \Z_{(2,1)}(1,3) \iff a=1\; \mod\; 4$. Suppose $a = 1 \mod 4$, and the statement is true for all $45\leqslant x \leqslant a$. Both $a+2$ and $a+6$ have at least two representations:
\begin{align*}
a+2 &= 2\cdot 1 + a = 2\cdot 3 + (a-4)\\
a+6 &= 2\cdot 3 + a = 2\cdot 5 + (a-4)
\end{align*}
hence they are not in the sequence. It remains to show that $a+4$ is in the sequence. $a+4$ can have at most two representations: $a+4 = 2\cdot \frac{a+1}{2} + 3$ or $a+4 = 2\cdot \frac{a-11}{2} + 15$ because 3 and 15 are the only numbers equal $3\; \mod\; 4$ in $\Z_{(2,1)}(1,3)$. If $a = 1\; \mod\; 8$, then $\frac{a+1}{2} = 1\;\mod\; 4$ and $\frac{a-11}{2} = 3\;\mod\; 4$, hence $\frac{a+1}{2}$ is in the sequence but $\frac{a-11}{2}$ is not. If $a = 5\; \mod\; 8$, then the opposite is true. Since $a\geqslant 45$, we have $\frac{a+1}{2}>\frac{a-11}{2}\geqslant 17$, and by assumption we know that an odd number $x\geqslant 17$ is in the sequence iff $x = 1\;\mod 4$. Thus always precisely one of $\frac{a+1}{2},\frac{a-11}{2}$ is in the sequence, meaning that $a+4$ has precisely one representation, and so is in the sequence. 
\end{proof}

\begin{example}
$\Z_{(2,1)}(1,9)$ and $\Z_{(2,1)}(3,7)$
\end{example}

While $\Z_{(2,1)}(1,3)$ is the only $(2,1)$-sequence that has been found regular, two other sequences, $\Z_{(2,1)}(1,9)$ and $\Z_{(2,1)}(3,7)$, exhibit an even more unexpected behavior. For each of these sequences, there exist positive integers $n_0$ and $d$, and a sequence of integers $m_1<m_2<...$ s.t.:
\begin{itemize}
\item $m_k\approx n_0+2^{k-1}d$ (with a very small error)
\item $a_{m_k}-a_{m_k - 1}=2\cdot(a_{m_{k-1}}-a_{m_{k-1}-1})-12$, or equivalently $$a_{m_k}-a_{m_k - 1}=2^{k-1}(a_{m_1}-a_{m_1 - 1})-(2^{k-1}-1)\cdot 12$$
\item up to small irregularities, the sequence of differences 
$$(a_{m_{k-1}}-a_{m_{k-1}-1}, a_{m_{k-1}+1}-a_{m_{k-1}}, a_{m_{k-1}+2}-a_{m_{k-1}+1},..., a_{m_k}-a_{m_k - 1})$$ can be broken into chunks of $(4,4,8,12)$.
\end{itemize}

The tricky part, which we have not attempted to do, is to check whether these irregularities follow some nice patterns or behave erratically. If the former is true, one could attempt to write a closed form formula for the terms of these two sequences.

Assuming that the aforementioned observations hold for the entirety of the sequence, and in particular assuming that the aforementioned inequalities are negligible and the sequence of differences can be completely broken into chunks of $(4,4,8,12)$, we can make inferences about the asymptotic densities of these sequences. Note:

\begin{align*}
a_{m_k}&\approx a_{m_1}+\frac{m_k-m_1}{4}(4+4+8+12)+(a_{m_1}-a_{m_1-1})(2+4+8+...+2^{k-1})\\
&-12(2+4+8+...+2^{k-1}-(k-1))\\
&\approx a_{m_1}+\frac{2^{k-1}d}{4}\cdot 28 +(a_{m_1}-a_{m_1-1})\cdot 2\cdot(2^{k-2}-1)-12\cdot 2^{k-1}\\
&\approx 7\cdot 2^{k-1}d+2^{k-1}(a_{m_1}-a_{m_1-1})-12\cdot 2^{k-1}\\
&= 2^{k-1}(7d-12+a_{m_1}-a_{m_1-1})
\end{align*}

Thus:
\begin{align*}
\frac{m_k}{a_{m_k}}&\approx\frac{n_0+2^{k-1}d}{2^{k-1}(7d-12+a_{m_1}-a_{m_1-1})}\\
&\approx \frac{d}{7d-12+a_{m_1}-a_{m_1-1}}
\end{align*}

For $\Z_{(2,1)}(1,9)$, we have $d\approx 6.86$, therefore $a_{m_1}-a_{m_1-1}=28$, hence $m_k/a_{m_k}\approx 6.86/64.02\approx 0.107$.

For $\Z_{(2,1)}(3,7)$, $d\approx 4.85$ while $a_{m_1}-a_{m_1-1}=24$, hence $m_k/a_{m_k}\approx 4.85/45.95\approx 0.106$. 

For $0<n<m_{k+1}-m_k$, 
$$\frac{m_k+n}{a_{m_k+n}}\approx\frac{2^{k-1}d+n}{2^{k-1}(7d-12+a_{m_1}-a_{m_1-1})+7n}$$
where 7 is the average of 4,4,8, and 12. Note that $(m_k+n)/a_{m_k+n}$ is increasing in $n$ because $0.107, 0.106<1/7$. Thus, the maximum of $(m_k+n)/a_{m_k+n}$ is obtained approximately when $n=m_{k+1}-m_k-1=2^{k-1}d-1\approx 2^{k-1}d$, in which case 

\begin{align*}
\frac{m_k+n}{a_{m_k+n}}&\approx\frac{2^{k-1}d+n}{2^{k-1}(7d-12+a_{m_1}-a_{m_1-1})+7n}\\
&\approx\frac{2^k d}{2^{k-1}(7d-12+a_{m_1}-a_{m_1-1}+7d)}\\
&=\frac{2d}{14d-12+a_{m_1}-a_{m_1-1}}
\end{align*}

For $\Z_{(2,1)}(1,9)$, this fraction equals ${2\cdot6.86}/{112.04}\approx 0.123$. For $\Z_{(2,1)}(3,7)$, it equals ${2\cdot4.85}/{79.90}\approx 0.121$. Hence we can say the following: $\Z_{(2,1)}(1,9)$ has lower density of roughly $0.107$ and upper density of roughly $0.122$, and $\Z_{(2,1)}(3,7)$ has lower density of roughly $0.106$ and upper density of roughly $0.121$. This result, contingent on the veracity of the aforementioned observations, is consistent with the numerical data shown on Figure \ref{density19}.

\begin{example}
$\Z_{(2,1)}(1,b)$ for $b = 3\mod 16$
\end{example}

Hinman, Kuca, Schlesinger, and Sheydvasser's rigidity conjecture states that Ulam sequences $\U(a,b)$ follow the same pattern whenever $b$ is in a fixed modulo class of some multiple $L$ of $a$ and $b>b_0$ for some $b_0\in\NN$ - a result well grounded in numerical data \cite{hinman_kuca_schlesinger_sheydvasser_2017}. We have looked at $(2,1)$-sequences to find similar patterns. However, we have only found one clear pattern: If $a=1$ and $b = 3 \mod 16$ for $b\geqslant 35$, then:
\begin{itemize}
\item $\Z_{(2,1)}(a,b)\;\cap\; (3+4\ZZ_{\geqslant 0})=\{b, b+4, b+8,...,2b-3; 2b+5, 2b+13, 2b+21, ..., 3b-6; 9b-36, 9b-16, 11b-42\}$
\item The only numbers in $3+4\ZZ_{\geqslant 0}$ with no representation are $3,7,11,..,b-4; 9b-12, 9b-8, 9b-4, ..., 11b-46$
\end{itemize}

In Ulam and modified Ulam sequences, a finite number of even terms implied regularity; do the restrictions in odd residue classes mod 4 in $(2,1)$-sequences also induce some kind of "regularity"? We have seen one example of a regular $(2,1)$-sequence, and 2 examples of $(2,1)$-sequences whose behavior also seems very "regular" (parentheses are used to distinguish our definition of regularity from an intuitive use of the word). Is there another notion of "regularity" that sequences with all but finitely many terms in an odd residue class mod 4 would satisfy?

\subsection{one of $a,b$ is even}
\

If one of $a,b$ is even and the other is odd, we also end up with two classes of sequences:
\begin{itemize}
\item quasiperiodic sequences with positive density: e.g. $\Z_{(2,1)}(1,10)$, $\Z_{(2,1)}(2,7)$ 
\item sequences with no apparent quasiperiodic behavior, density converging to 0, and usually no big disproportions in how often each residue class mod 4 is represented: e.g. $\Z_{(2,1)}(1,2)$, $\Z_{(2,1)}(1,24)$, $\Z_{(2,1)}(4,11)$, $\Z_{(2,1)}(7,12)$.
\end{itemize}

The surprising phenomenon of quasiperiodicity that appears in Ulam sequences is thus present as well in $(2,1)$-sequences; perhaps more unexpectedly, there seems to be a fairly large collection of $(2,1)$-sequences that exhibit no properties of interest to us - neither quasiperiodicity nor positive density. 

	\begin{figure}[h!]
	\includegraphics[height = 0.25\textheight]{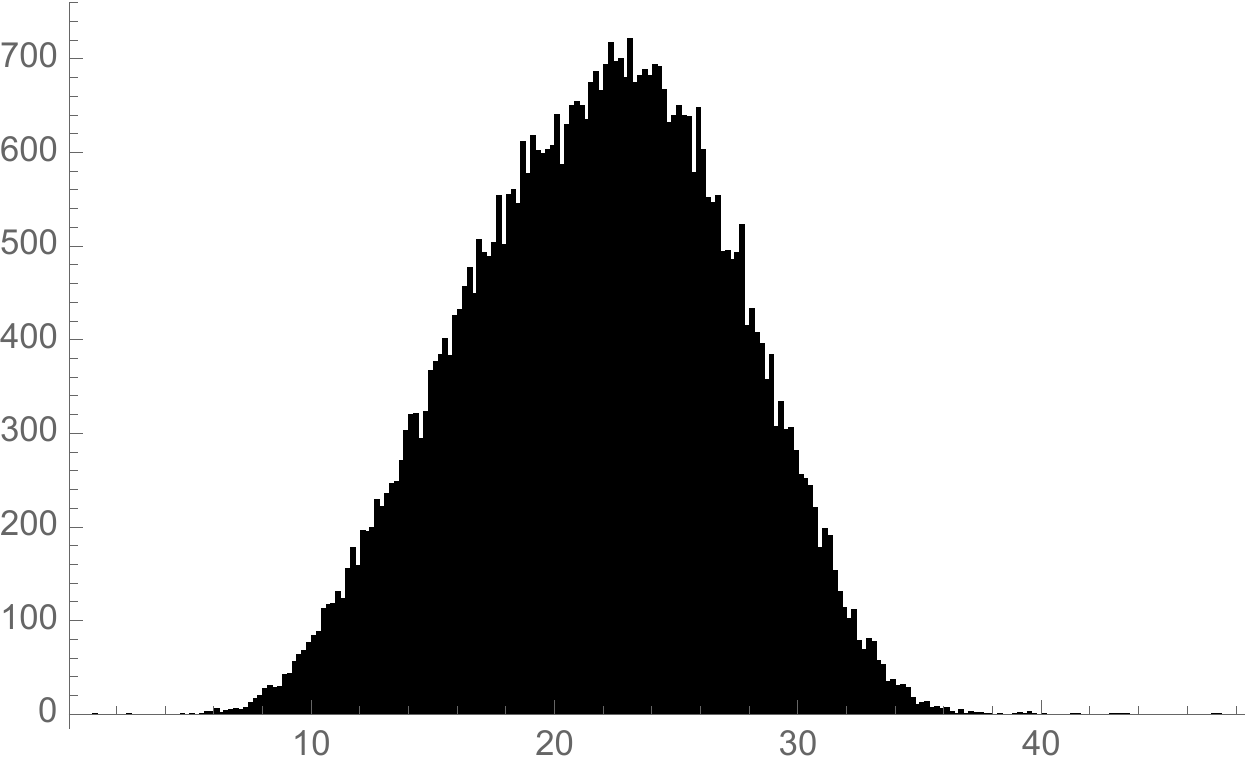}
	
	\caption{The distribution of $\Z_{(2,1)}(1,10) \mod 96.605$.}
	\label{U110}
	\end{figure}

\subsection{Questions}
\ 

There is a number of open questions about $(2,1)$-sequences:
\begin{enumerate}
\item Are $(2,1)$-sequences other than $\Z_{(2,1)}(1,3)$ regular? 
\item Do there exist $(2,1)$-sequences other than $\Z_{(2,1)}(1,9)$ and $\Z_{(2,1)}(3,7)$ that have similar behavior to these sequences? More generally, given any $n>1$, will there exist an $(n,1)$-sequence with the following property: there exist positive integers $n_0, c, d$, and a sequence of integers $m_1<m_2<...$ s.t.:
\begin{itemize}
\item $m_k\approx n_0+n^{k-1}d$ (with a very small error)
\item $a_{m_k}-a_{m_k - 1}=n\cdot(a_{m_{k-1}}-a_{m_{k-1}-1})+c$
\item up to small irregularities, the sequence of differences 
$$(a_{m_{k-1}}-a_{m_{k-1}-1}, a_{m_{k-1}+1}-a_{m_{k-1}}, a_{m_{k-1}+2}-a_{m_{k-1}+1},..., a_{m_k}-a_{m_k - 1})$$ can be broken into small chunks of $(b_1,...,b_l)$ for some positive integers $b_1,...,b_l\in\ZZ_+$.
\end{itemize}
\item Is there a different notion of "regularity" that sequences with all but finitely many terms in an odd residue class will satisfy?
\item Is there a simple classification of which $(2,1)$-sequences behave in any of the ways described in this section?
\end{enumerate}

\subsection*{Acknowledgements}
This paper would not be possible without the SUMRY 2017 research project on Ulam sequences that the author did with Joshua Hinman, Alexander Schlesinger, and Arseniy Sheydvasser. Moreover, the author is indebted to Stefan Steinerberger for mentorship along the way and to Ross Berkowitz and Patrick Devlin for useful suggestions.

\bibliography{Ulam_Library}
\bibliographystyle{alpha}

\end{document}